\def\0{\emptyset}
\def\n{\noindent}
\newtheorem{theorem}{Theorem}
\newtheorem{lemma}[theorem]{Lemma}
\newtheorem{claim}[theorem]{Claim}
\newtheorem{subclaim}[theorem]{Subclaim}
\newtheorem{conjecture}[theorem]{Conjecture}
\newtheorem{fact}{Fact}
\begin{document}
\title{Vertex degree sums for perfect matchings in 3-uniform hypergraphs }
\thanks{Yan Wang is supported by National Natural Science Foundation of China under Grant No. 12201400, National Key R\&D Program of China under Grant No. 2022YFA1006400 and Explore X project of Shanghai
Jiao Tong University. Yi Zhang is supported by Fundamental Research Funds for the Central Universities and Innovation Foundation of BUPT for Youth under Grant No. 2023RC49 and National Natural Science Foundation of China under Grant No. 11901048 and 12071002.}

\author{Yan Wang}
\address{School of Mathematical Sciences, CMA-Shanghai, Shanghai Jiao Tong
University, Shanghai 200240, China}
\email{yan.w@sjtu.edu.cn}

\author{Yi Zhang}
\address{ School of Sciences, Beijing University of Posts and Telecommunications, Beijing 100876, China}
\email{shouwangmm@sina.com}

\date{\today}

\keywords{Matchings; Uniform hypergraphs; Ore's condition}

\begin{abstract}
Let $n \equiv 0\, (\, \text{mod } 3\,)$ and  $H_{n, n/3}^2$ be the 3-graph of order $n$, whose vertex set is partitioned into two sets $S$ and $T$ of size $\frac{1}{3}n+1$ and $\frac{2}{3}n -1$, respectively, and whose edge set consists of all triples with at least $2$ vertices in $T$. Suppose that $n$ is sufficiently large and  $H$ is a 3-uniform hypergraph of order $n$ with  no isolated vertex. Zhang and Lu [Discrete Math. 341 (2018), 748--758] conjectured that if  $\deg(u)+\deg(v) > 2(\binom{n-1}{2}-\binom{2n/3}{2})$ for any two vertices $u$ and $v$ that are contained in some edge of $H$, then $H$ contains a perfect matching or $H$ is a subgraph of $H_{n,n/3}^2$. We construct a counter-example to the conjecture. Furthermore, for all $\gamma>0$ and let $n \in 3 \mathbb{Z}$ be sufficiently large, we prove that if $\deg(u)+\deg(v) > (3/5+\gamma)n^2$ for any two vertices $u$ and $v$ that are contained in some edge of $H$,   then $H$ contains a perfect matching or $H$ is a subgraph of $H_{n,n/3}^2$. This implies a result of Zhang, Zhao and Lu [Electron. J. Combin. 25 (3), 2018].
\end{abstract}

\maketitle

\section{Introduction}

A \emph{$k$-uniform hypergraph} $H$ (in short, \emph{$k$-graph}) is a pair $(V(H),E(H))$, where $V(H)$ is a finite set
of vertices and $E(H)$ is a family of $k$-element subsets of $V$.
A \emph{matching of size $s$} in $H$ is a family of $s$ pairwise disjoint edges of $H$. If a matching covers all the vertices of $H$, then we call it a \emph{perfect matching}.
Given a set $S \subseteq V$, the \emph{degree} $\deg_{H}(S)$ of $S$ is the number of the edges of $H$ containing $S$. We simply write $\deg(S)$ when $H$ is obvious from the context. If $S=\{u\}$, then we write $\deg(u)$ instead of $\deg(S)$. Further, let $\delta_\ell(H)=\min\{\deg(S)~:~S\subseteq V(H),~|S|=\ell\}$
be the minimum $\ell$-degree of $H$.

Given integers $\ell<k\le n$ such that $k$ divides $n$, let $m_\ell(k,n)$ denote the smallest integer $m$ such that every $k$-graph $H$ on $n$ vertices with $\delta_\ell(H) \geq m$ contains a perfect matching. In recent years the problem of determining $m_\ell(k,n)$ has received much attention (see \cite{Alon}, \cite{Han,Han3,Kha1,Kha2,Kuhn1,Kuhn2,LuMi,Mar,Mitzenmacher,Ore,Pik}, \cite{Rod2,Rod1,Rod3}, \cite{Tre,TrZh13,Town2}).
More Dirac-type results on hypergraphs can be found in surveys \cite{RoRu-s, Zhao}.

A well-known result of Ore \cite{Ore} extended Dirac's theorem by determining the smallest degree sum of two non-adjacent vertices that guarantees a Hamilton cycle in graphs. Ore-type results for hypergraphs witnessed much progress several years ago. For example, Tang and Yan \cite{Tang} studied the degree sum of two $(k-1)$-sets that guarantees a tight Hamilton cycle in $k$-graphs. Zhang and Lu \cite{Yi1} studied the degree sum of two $(k-1)$-sets that guarantees a matching of size $s$ in $k$-graphs. Zhang, Zhao and Lu \cite{zhang,zhang2} determined the minimum degree sum of two adjacent vertices that guarantees a perfect matching in 3-graphs without isolated vertices (two vertices in a hypergraph are \emph{adjacent} if  there exists an edge containing both of them). Note that one may study the minimum degree sum of two arbitrary vertices and that of two non-adjacent vertices that guarantees a perfect matching -- it was mentioned in \cite{zhang} that the former equals to $2 m_1(3, n)-1$ while the latter does not exist (consider a $3$-graph whose edge set consists of all $3$-tuples containing a fixed vertex).

For $1\le \ell\le 3$, let $H^{\ell}_{n, s}$ denote the 3-graph of order $n$, whose vertex set is partitioned into two sets $S$ and $T$ of size $n- s\ell+1$ and $s\ell -1$, respectively, and whose edge set consists of all triples with at least $\ell$ vertices in $T$. Obviously all these graphs $H^{\ell}_{n, s}$  do not contain a matching of size $s$. A well-known conjecture of Erd\H{o}s ~\cite{Erd65}, verified for $3$-graphs \cite{Fra, LuMi} , implies that $H_{n,s}^1$  or $H_{n,s}^3$  is the densest $3$-graph on $n$ vertices not containing a matching of size $s$. On the other hand, K\"{u}hn, Osthus and Treglown \cite{Kuhn2} showed that for sufficiently large $n$, $H_{n,s}^1$ has the largest minimum vertex degree among all 3-graphs on $n$ vertices not containing a matching of size $s$.

\begin{theorem}\cite{Kuhn2}\label{Kuhn2}
There exists $n_0 \in \mathbb{N}$ such that if $H$ is a $3$-graph of order $n \geq n_0$ with $\delta_1(H) > \delta_1(H^1_{n, s})= \binom{n-1}{2}-\binom{n-s}{2}$ and $n \geq 3s$, then $H$ contains a matching of size $s$.
\end{theorem}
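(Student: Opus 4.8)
\bigskip
\noindent\textbf{Proof strategy for Theorem~\ref{Kuhn2}.}
The plan is to combine the absorption method with a stability (``extremal case'') analysis. Fix constants $0<\alpha\ll\beta\ll 1$, and call $H$ \emph{$\alpha$-extremal} if some set $T'\subseteq V(H)$ with $|T'|\le s-1$ meets all but at most $\alpha n^{3}$ edges of $H$, and \emph{$\alpha$-far} otherwise. Since the hypothesis exceeds $\delta_1(H^1_{n,s})$ by only $1$, there is essentially no slack to exploit, and the two regimes have to be treated by different means; in both I would assume, for contradiction, that $H$ has no matching of size $s$.

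\smallskip
\noindent\emph{The $\alpha$-far case.} First I would prove an absorbing lemma: using $\delta_1(H)\ge\delta_1(H^1_{n,s})$ together with the fact that no set of size at most $s-1$ covers almost all of the edges, show that every triple $W\subseteq V(H)$ has $\Omega(n^{6})$ \emph{absorbers} --- $6$-sets $A$, disjoint from $W$, such that $A$ spans two disjoint edges and $A\cup W$ spans three disjoint edges. A random selection of $O(\beta n)$ candidate absorbers, with the ones that overlap or are over-used deleted, then yields a matching $M_{\mathrm{abs}}$ with $|M_{\mathrm{abs}}|\le\beta n$ that can absorb any vertex set $W$ with $|W|\le\beta^{2}n$, $3\mid|W|$ and $W\cap V(M_{\mathrm{abs}})=\emptyset$. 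Next I would build, inside $H'=H-V(M_{\mathrm{abs}})$, a matching which --- after absorbing a leftover of at most $\beta^{2}n$ vertices into $M_{\mathrm{abs}}$ --- extends $M_{\mathrm{abs}}$ to a matching of size at least $s$; the existence of such a matching in $H'$ is again driven by the fact that $H'$ has no small edge cover, which is what stops the greedy/absorption construction from stalling. (When $n$ is much larger than $3s$, any matching covering all but $o(n)$ vertices already has size $\ge s$; and when $s$ is close to $n/3$ one has $\delta_1(H')>(1/2+o(1))\binom{|V(H')|}{2}$, so a R\"odl--Ruci\'nski--Szemer\'edi-type almost-perfect-matching result applies directly.) Either way we obtain a matching of size $s$, a contradiction.

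\smallskip
\noindent\emph{The $\alpha$-extremal case.} Here I would fix $T'$ with $|T'|\le s-1$ meeting all but at most $\alpha n^{3}$ edges, put $S'=V(H)\setminus T'$, and proceed in two phases. First absorb all \emph{atypical} vertices --- those contained in one of the few edges lying inside $S'$, or whose link differs substantially from the model --- into a small matching $M_1$; then, in what remains, match the rest of $T'$ together with carefully chosen pairs from $S'$ using the plentiful ``model'' edges, which can be arranged by a defect form of Hall's theorem for the incidence structure between $T'$ and pairs inside $S'$. The one place where the \emph{strict} inequality $\delta_1(H)>\delta_1(H^1_{n,s})$ is indispensable is in handling vertices of $S'$ lying in no edge within $S'$: in $H^1_{n,s}$ every vertex of $S'$ has exactly $\delta_1(H^1_{n,s})$ edges and all of them meet $T'$, so an extra edge at such a vertex must either be an $S'$-internal edge or supply enough cross structure to route around the vertex, and a counting argument shows that only $o(n)$ vertices of $S'$ are genuinely problematic --- few enough to be matched into the tiny set $T'$. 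Assembling the two phases yields a matching of size $s$, again a contradiction.

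\smallskip
\noindent I expect the extremal case to be the main obstacle. Making the partition $V(H)=S'\cup T'$ genuinely robust, controlling the exceptional vertices, and then fitting them into a set of size only $s-1$ while still leaving room to finish with the model edges --- all on the strength of a minimum degree exceeding the threshold by a single unit --- is precisely where the technical weight of the argument lies.
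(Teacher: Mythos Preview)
This theorem is not proved in the present paper at all: it is quoted as a known result of K\"uhn, Osthus and Treglown~\cite{Kuhn2}, stated for context and then used as background. There is therefore no proof here to compare your proposal against.

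For what it is worth, your outline is broadly in the spirit of the original K\"uhn--Osthus--Treglown argument (stability plus an extremal analysis), though their paper handles the non-extremal case by a direct greedy/augmentation argument rather than by building an absorbing matching as you propose. Your absorbing-lemma step is not obviously justified under the hypothesis $\delta_1(H)>\binom{n-1}{2}-\binom{n-s}{2}$ alone when $s$ is small relative to $n$: the degree floor can be as low as $n-2$, which is far too weak to guarantee $\Omega(n^{6})$ absorbers for every triple, and merely being $\alpha$-far from the extremal configuration does not by itself supply the codegree needed for that count. If you want to pursue an absorption approach you would have to argue much more carefully that the specific triples you need to absorb (not arbitrary ones) have enough absorbers, or else revert to the augmentation strategy actually used in~\cite{Kuhn2}.
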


Given a 3-graph $H$, let $\sigma_2(H)$ denote the minimum $\deg(u)+\deg(v)$ among all adjacent vertices $u$ and $v$. Note that
\begin{align*}
\sigma_2(H_{n,s}^3) &= 2 \binom{3s-2}{2}, \quad
\sigma_2(H_{n,s}^1) =2\left(\binom{n-1}{2}-\binom{n-s}{2} \right) \ \text{and} \\
\sigma_2(H_{n,s}^2) & =\binom{2s-2}{2}+\left(n-2s+1\right)\binom{2s-2}{1}+\binom{2s-1}{2} = (2s-2)(n-1).
\end{align*}
Note that $\sigma_2(H_{n,s}^2) \geq \sigma_2(H_{n,s}^1)$. For those 3-graphs without isolated vertices,
Zhang, Zhao and Lu \cite{zhang2} showed $H_{n,s}^2$ to be the extremal graph without a matching of size $s$.
\begin{theorem}\cite{zhang2}
\label{the1}
There exists $n_0 \in \mathbb{N}$ such that the following holds for all integers $n\ge n_0$ and $s\le n/3$. If $H$ is a $3$-graph of order $n$ without isolated vertex and $\sigma_2(H) > \sigma_2(H_{n,s}^2)= 2(s-1)(n-1)$, then $H$ contains a matching of size $s$.
\end{theorem}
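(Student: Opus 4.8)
The plan is to combine Theorem~\ref{Kuhn2} with an analysis of the low-degree vertices of $H$, finishing with an absorption argument in the range that is left.

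\textbf{Step 1 (reduction via the minimum-degree result).} If $\delta_1(H)>\binom{n-1}{2}-\binom{n-s}{2}=\delta_1(H^1_{n,s})$, then Theorem~\ref{Kuhn2} already yields a matching of size $s$. So I may assume that some vertex has degree at most $\binom{n-1}{2}-\binom{n-s}{2}$, and since $\binom{n-1}{2}-\binom{n-s}{2}=\tfrac12\sigma_2(H^1_{n,s})\le\tfrac12\sigma_2(H^2_{n,s})=(s-1)(n-1)$ this means that
\[
L:=\{\,v\in V(H):\deg(v)\le (s-1)(n-1)\,\}\neq\emptyset,\qquad V':=V(H)\setminus L.
\]
If two vertices of $L$ were contained in a common edge, their degrees would sum to at most $2(s-1)(n-1)=\sigma_2(H^2_{n,s})$, contradicting the hypothesis; hence $L$ is \emph{strongly independent}. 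Therefore every edge of $H$ has at least two vertices in $V'$, every vertex of $V'$ has degree greater than $(s-1)(n-1)$, and — as $H$ has no isolated vertex — each $\ell\in L$ lies in an edge whose other two vertices belong to $V'$.

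\textbf{Step 2 (the extremal configuration is excluded).} Put $t=|V'|$. Each $v\in V'$ satisfies $\deg(v)\le\binom{t-1}{2}+(t-1)(n-t)=\tfrac12(t-1)(2n-t-2)$, because every edge through $v$ has a second vertex in $V'$ and at most one in $L$; and each $\ell\in L$ satisfies $\deg(\ell)\le\binom t2$, because all its edges lie in $\{\ell\}\cup V'$. Taking any $\ell\in L$ with a neighbour $v\in V'$, we get
\[
\deg(\ell)+\deg(v)\ \le\ \binom t2+\tfrac12(t-1)(2n-t-2)\ =\ (t-1)(n-1).
\]
If $t\le 2s-1$ the right-hand side is at most $2(s-1)(n-1)$, contradicting the hypothesis; hence $|V'|\ge 2s$. (The graph $H^2_{n,s}$ itself has $|V'|=2s-1$ and saturates this estimate, which is exactly why the threshold $2(s-1)(n-1)$ is sharp.) Read the other way, the inequality also shows that when $t$ is close to $2s$ every $\ell\in L$ is forced to have comparatively large degree, so that in that narrow range one can build the matching of size $s$ by a direct, rainbow-matching-type argument on the edges that use two vertices of $V'$ and one of $L$.

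\textbf{Step 3 (finishing the matching, and the main obstacle).} It remains to produce a matching of size $s$ from the structure obtained above: $|V'|\ge 2s$, $\deg(v)>(s-1)(n-1)$ for every $v\in V'$, every edge meets $V'$ in at least two vertices, and $L$ strongly independent. If $|V'|\ge 3s$ (which requires $s<n/3$ and $L$ small) one can hope to pass to $H[V']$, whose minimum degree lies only $O(|L|\,n)$ below $(s-1)(n-1)$, and apply Theorem~\ref{Kuhn2} there. Otherwise — in particular whenever $s=n/3$, where $|V'|<3s$ always — I would use the absorption method: build an absorbing matching $M_0$ inside $V'$; greedily extend it, through edges with two vertices in $V'$ and one in $L$, to a matching missing only a bounded set of vertices; and absorb that set with $M_0$. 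The large degrees within $V'$, together with the strong independence of $L$, should supply the edges needed for both the extension and the absorbers. I expect the crux to be the absorbing lemma: a low-degree vertex of $L$ cannot act as an absorber and must itself be absorbed, so one has to check that every triple consisting of two vertices of $V'$ and one of $L$ has many vertex-disjoint absorbers, and — what really makes it delicate — that the whole count closes at the \emph{exact} value $2(s-1)(n-1)$ and not merely up to a constant factor. That last point is precisely the tension the present paper trades away, by weakening the hypothesis to $(3/5+\gamma)n^2$ in exchange for admitting the exceptional subgraphs of $H^2_{n,n/3}$.
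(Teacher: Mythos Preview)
This theorem is not proved in the present paper at all: it is quoted verbatim from \cite{zhang2} as background, and nowhere in the paper is a proof supplied. So there is no ``paper's own proof'' to compare against; what you have written is an attempted independent proof of a cited result.

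On its merits your proposal has a genuine gap. Steps~1 and~2 are fine and are the natural opening moves: the set $L$ of low-degree vertices is strongly independent, and the degree bounds force $|V'|\ge 2s$. But Step~3 is only a sketch, and you yourself flag the obstruction: an absorbing lemma robust enough to cover triples containing a vertex of $L$ \emph{at the exact threshold} $2(s-1)(n-1)$ is precisely what is hard here, and you do not prove one. The fallback you suggest for $|V'|\ge 3s$ --- restricting to $H[V']$ and invoking Theorem~\ref{Kuhn2} --- does not work as stated either, since passing to $H[V']$ can lose up to $|L|(|V'|-1)$ from each degree, which when $|L|$ is of order $n$ swamps the margin you have over $\delta_1(H^1_{|V'|,s})$. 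So neither branch of Step~3 is actually carried out.

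For orientation: the proof in \cite{zhang2} does not use absorption. It takes a maximum matching $M$ of size $<s$, picks suitable leftover vertices, and bounds their total degree by decomposing each vertex's link into contributions from pairs of matching edges, from one matching edge and the leftover, and from the leftover alone --- exactly the kind of intersecting-link counting captured by Lemmas~\ref{lemmaa2}--\ref{Lemma4} here. That machinery is what lets the argument close at the sharp value $2(s-1)(n-1)$, whereas absorption (as you correctly observe in your last sentence) is what the present paper resorts to only after relaxing the hypothesis to $(3/5+\gamma)n^2$.
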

On the other hand,  $\sigma_2(H_{n,s}^2) \geq \sigma_2(H_{n,s}^3)$ if and only if $s \le (2n+4)/9$. One may allow a $3$-graph to contain isolated vertices. Zhang, Zhao and Lu \cite{zhang2} obtained that $H_{n,s}^2$ and $H_{n,s}^3$ are  the extremal graphs  when $s \leq (2n+4)/9$ and $s > (2n+4)/9$, respectively, among all the $3$-graphs without a matching of size $s$.
\begin{theorem}\cite{zhang2}
There exists $n_2 \in \mathbb{N}$ such that the following holds. Suppose that $H$ is a $3$-graph of order $n \geq n_2$ and $2\le s\le n/3$. If $\sigma_2(H) > \sigma_2(H_{n,s}^2)$ and $s \leq (2n+4)/9$ or $\sigma_2(H) > \sigma_2(H_{n,s}^3)$ and $s > (2n+4)/9$, then $H$ contains a matching of size $s$.
\end{theorem}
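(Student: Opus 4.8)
The plan is to deduce this from Theorem~\ref{the1}, the version for $3$-graphs without isolated vertices, by deleting all isolated vertices of $H$ and verifying that the resulting $3$-graph still meets the hypotheses. First I note that the two alternatives in the assumption together say precisely that $\sigma_2(H)>\max\{\sigma_2(H_{n,s}^2),\sigma_2(H_{n,s}^3)\}$: since $\sigma_2(H_{n,s}^2)\ge\sigma_2(H_{n,s}^3)$ holds exactly when $s\le(2n+4)/9$, the clause that is in force always involves the larger of the two quantities. Hence I may use both $\sigma_2(H)>\sigma_2(H_{n,s}^2)=2(s-1)(n-1)$ and $\sigma_2(H)>\sigma_2(H_{n,s}^3)=2\binom{3s-2}{2}$. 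I may also assume that $H$ has at least one edge (otherwise the hypothesis is vacuous) and that $H$ has an isolated vertex, since otherwise Theorem~\ref{the1} applies to $H$ directly, as $n\ge n_2\ge n_0$, $2\le s\le n/3$, and $\sigma_2(H)>\sigma_2(H_{n,s}^2)$.

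Next I would let $I$ be the set of isolated vertices of $H$, set $H':=H-I$ and $n':=n-|I|<n$. Since no edge of $H$ meets $I$, one has $E(H')=E(H)$; consequently $H'$ has no isolated vertex, $\deg_{H'}(u)=\deg_H(u)$ for every $u\in V(H')$, a pair is adjacent in $H'$ if and only if it is adjacent in $H$, and therefore $\sigma_2(H')=\sigma_2(H)$. As a matching of size $s$ in $H'$ is also one in $H$, it suffices to apply Theorem~\ref{the1} to $H'$ with the same $s$, for which I must check that (i) $\sigma_2(H')>\sigma_2(H_{n',s}^2)=2(s-1)(n'-1)$, (ii) $s\le n'/3$, and (iii) $n'\ge n_0$. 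Condition (i) is immediate, since $\sigma_2(H')=\sigma_2(H)>2(s-1)(n-1)>2(s-1)(n'-1)$ because $n>n'$ and $s-1\ge1$. For (ii) and (iii) I would fix an edge $\{x,y,z\}$ of $H'$, which exists; then $\{x,y\}$ is adjacent, so $2\binom{n'-1}{2}\ge\deg_{H'}(x)+\deg_{H'}(y)\ge\sigma_2(H')=\sigma_2(H)$, each summand being at most $\binom{n'-1}{2}$. If $n'\le 3s-1$ then $\binom{n'-1}{2}\le\binom{3s-2}{2}$, so this chain gives $2\binom{3s-2}{2}\ge\sigma_2(H)>2\binom{3s-2}{2}$, a contradiction; hence $n'\ge 3s$, proving (ii). For (iii), $2\binom{n'-1}{2}\ge\sigma_2(H)>2(s-1)(n-1)\ge 2(n-1)$ since $s\ge2$, so $(n'-1)^2>2(n-1)\ge n_0^2$ once $n\ge n_2$ for a suitably large $n_2$; thus $n'>n_0$. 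Theorem~\ref{the1} applied to $H'$ then yields a matching of size $s$ in $H'\subseteq H$.

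I expect the only delicate point to be the bookkeeping in (ii) and (iii): one must ensure that discarding isolated vertices cannot pull the host set below $3s$ or below the absolute threshold $n_0$ of Theorem~\ref{the1}. The mechanism preventing this is that the hypothesis forces one endpoint of every edge of $H$ to have degree exceeding $\binom{3s-2}{2}$ (through the $H_{n,s}^3$ clause) and exceeding $n-1$ (through the $H_{n,s}^2$ clause together with $s\ge2$), while a host set on $n'$ vertices can support degree at most $\binom{n'-1}{2}$; all the genuine hypergraph content is thus already contained in Theorem~\ref{the1}.
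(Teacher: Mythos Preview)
The paper does not give its own proof of this statement: it is quoted verbatim from \cite{zhang2} as background and no argument is supplied here. So there is nothing in the present paper to compare your proposal against.

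That said, your reduction to Theorem~\ref{the1} is sound. The key observation that the two clauses of the hypothesis are together equivalent to $\sigma_2(H)>\max\{\sigma_2(H_{n,s}^2),\sigma_2(H_{n,s}^3)\}$ is exactly the content of the sentence preceding the theorem, and from there the bookkeeping goes through: deleting isolated vertices preserves both the edge set and $\sigma_2$; the bound $\sigma_2(H)>2\binom{3s-2}{2}$ combined with $\deg_{H'}(x)+\deg_{H'}(y)\le 2\binom{n'-1}{2}$ forces $n'\ge 3s$; and $\sigma_2(H)>2(s-1)(n-1)\ge 2(n-1)$ yields $(n'-1)(n'-2)>2(n-1)$, so $n'\ge n_0$ once $n_2$ is chosen with $2(n_2-1)\ge n_0^2$. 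The only cosmetic point is your handling of the edgeless case: if $H$ has no edge then $\sigma_2(H)$ is a minimum over the empty set, and whether the hypothesis is ``vacuous'' depends on convention. Since the theorem is stated for $s\ge 2$ and is quoted from a paper where $\sigma_2$ is implicitly taken over a nonempty set of adjacent pairs, this is not a genuine gap.
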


Since Theorems \ref{Kuhn2}  and \ref{the1}   have different extremal hypergraphs, Theorem~\ref{the1} does not imply Theorem~\ref{Kuhn2} (and Theorem~\ref{Kuhn2} does not imply Erd\H os' matching conjecture for 3-graphs either).  Zhang and Lu~\cite{Yi2} made the following conjecture.
\begin{conjecture}\cite{Yi2}\label{con2}
There exists $n_0 \in \mathbb{N}$ such that the following holds. Suppose that $H$ is a $3$-graph of order $n \geq n_0$ without isolated vertex. If $\sigma_2(H) > 2\left( \binom{n-1}{2}-\binom{n-s}{2}\right)$ and $n \geq 3s$, then $H$ contains no matching of size $s$ if and only if $H$ is a subgraph of $H_{n,s}^2$.
\end{conjecture}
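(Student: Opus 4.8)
As the abstract indicates, Conjecture~\ref{con2} is false, so the plan is in two parts: to refute it, and to establish the strongest positive statement that survives, the one with degree-sum threshold $(3/5+\gamma)n^2$.

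\textbf{Why the conjecture fails.} The weak point is the threshold itself: since $2\bigl(\binom{n-1}{2}-\binom{2n/3}{2}\bigr)=2m_1(3,n)-2$, the degree-sum hypothesis merely forbids two vertices of degree below $m_1(3,n)$ from lying in a common edge, so the set $L$ of such vertices is independent and otherwise unconstrained. I would exploit this by building an extremal configuration around an independent set $L$ of size roughly $n/3$: give every vertex of $L$ its link inside one common set $G_0$ of size about $2|L|-1$, so that $L$ cannot be saturated by disjoint pairs and $H$ has no perfect matching; make $V\setminus L$ nearly complete so that degrees outside $L$ are large; and then perturb, by one extra vertex or a small set, so that no $(2n/3-1)$-set can play the role of the ``$T$'' in an $H^2_{n,n/3}$. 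The delicate part -- the main obstacle on this side -- is the bookkeeping: a vertex of $L$ adjacent to a non-$L$ vertex of degree at most $\binom{n-1}{2}$ must itself have degree exceeding $\sigma_2(H)-\binom{n-1}{2}$, so one has to keep all $L$-degrees of order $n^2$ while $V\setminus L$ stays almost complete and every perfect matching and every embedding into $H^2_{n,n/3}$ is destroyed. Optimising this, the largest value of $\min_{uv\in E}(\deg u+\deg v)$ one can still force is about $3n^2/5$, which is exactly the threshold below which the conclusion must fail.

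\textbf{The positive statement.} Suppose $\deg(u)+\deg(v)>(3/5+\gamma)n^2$ for all adjacent $u,v$ and $H$ has no isolated vertex. Let $L=\{v:\deg(v)\le(3/10+\gamma/2)n^2\}$; then $L$ is independent, and every vertex outside $L$ has degree $>m_1(3,n)$ for $n$ large, since $m_1(3,n)\sim\tfrac{5}{18}n^2<\tfrac{3}{10}n^2$. If $L=\emptyset$ then $\delta_1(H)>m_1(3,n)$ and Theorem~\ref{Kuhn2} with $s=n/3$ already gives a perfect matching, so assume $L\neq\emptyset$. I would run the absorbing method adapted to a degree-sum hypothesis: first build an absorbing matching $M_{\mathrm{abs}}$ on $o(n)$ vertices in which every vertex -- and, crucially, every vertex of $L$, using that its neighbours have degree close to $\binom{n-1}{2}$ -- has linearly many absorbers; then find an almost perfect matching of $H-V(M_{\mathrm{abs}})$ leaving a bounded defect set $D$, via the standard passage from an almost perfect fractional matching, which tolerates the small independent set $L$; then absorb $D$ with $M_{\mathrm{abs}}$. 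If any step fails, it is because $H$ lies within $o(n^3)$ edges of $H^2_{n,n/3}$, and in that extremal regime I would argue by hand, using the links of the $\Theta(n)$ large-degree vertices together with the no-perfect-matching assumption, to show that every edge has at least two vertices in the associated $(2n/3-1)$-set, i.e.\ $H\subseteq H^2_{n,n/3}$.

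\textbf{Where the real difficulty lies.} In both halves it is the gap between a degree-sum and a minimum-degree hypothesis. No naive reduction works: deleting one edge through each vertex of $L$ removes $3|L|$ vertices but lowers the K\"uhn--Osthus--Treglown threshold by only about $\tfrac{5}{3}|L|n$, whereas it can cost a single vertex up to $3|L|n$ in degree, so the remainder need not have a perfect matching. Hence the absorbing structure must be set up with the low-degree vertices of $L$ in mind from the outset, and the stability analysis near $H^2_{n,n/3}$ -- the place where the constant $3/5$, rather than $5/9$, is actually felt -- has to be carried out directly; pinning down that constant is precisely what the counter-example does.
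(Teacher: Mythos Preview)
Your counterexample sketch has a real gap. If $L$ has size about $n/3$, every vertex of $L$ has its link contained in a common set $G_0$ of size $2|L|-1$, and $V\setminus L$ is made complete, then $L\cup\{w\}$ (where $w$ is the single vertex of $V\setminus(L\cup G_0)$) is an independent set of size $n/3+1$, and every edge has at least two vertices in $G_0$; so your graph \emph{is} a subgraph of $H^2_{n,n/3}$ with $T=G_0$. The ``perturbation'' you defer is therefore the entire content of the construction, and it is not innocent: any edge meeting $L$ in two vertices, or meeting $L\cup\{w\}$ outside $G_0$, drops the relevant degree sum to roughly $4n^2/9$, well below the threshold. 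The paper's counterexample is structurally different: it uses a \emph{three}-part partition $R\cup S\cup T$ with $|R|=x$, $|S|=2x+y$, $|T|=n-3x-y$, takes as edges all triples within $T$, all triples with two vertices in $T$ and one elsewhere, and all triples with two vertices in $S$ and one in $T$, and then optimises $x,y$ (the optimum is near $x=n/5$, $y=n/3-x-1$). The point is that the independence number is only $x+1\approx n/5$, so the graph cannot embed in $H^2_{n,n/3}$; but the low-degree vertices in $R$ are only ever adjacent to $T$-vertices of very high degree, so $\sigma_2$ stays large. This optimisation gives $\sigma_2\approx(128/225)n^2$, not $3n^2/5$ as you assert; the paper leaves the gap between $128/225$ and $3/5$ open (Conjecture~\ref{newconjecture}).

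On the positive side your plan is in the right spirit but diverges from the paper's. The paper does not run a stability--absorption dichotomy with fractional matchings. Instead it splits on $|W|$: when $(1/3-\tau)n\le|W|\le n/3$ it argues directly (no absorbing), using that every $U$-vertex must be adjacent to $W$ and hence has degree at least $(17/45)n^2$, and then builds a $W$-saturating matching and extends it by hand via link-counting lemmas of the form ``three link graphs with prescribed intersection constraints have bounded total degree''. When $|W|<(1/3-\tau)n$ it first applies an absorbing lemma, then stratifies the remaining vertices into four degree classes $A_1,\dots,A_4$ and shows, by explicit counting against pairs of matching edges (Lemmas~\ref{perfectmatching} and~\ref{b023b033}), that all but $O(1)$ vertices of each $A_i$ are covered by a maximum matching subject to covering $A_1$. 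No fractional matching or global stability step is used; the extremal structure is handled entirely by the case split and these local counting lemmas, which is where the constants $4/5$, $9/10$, and $1$ (for $A_2,A_3,A_4$ respectively) are actually computed and compared.
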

Zhang and Lu~\cite{Yi2} showed that the conjecture holds when $n \geq 9s^2$. Later the same authors \cite{Yi3} proved the conjecture for $n \geq 13s$. Recently they \cite{Yilu2023} improve the bound to $n \geq 4s+7$. Conjecture \ref{con2}, if true, strengthens Theorem~\ref{Kuhn2} and actually provides a link between Ore type problems and Dirac type problems. However, we construct a counterexample to Conjecture \ref{con2} and thus disprove the conjecture.
\begin{theorem} \label{thextremegraph}
There exists a $3$-graph $H$ satisfying: (1) $ \sigma_2 (H)>  2\left( \binom{n-1}{2}-\binom{2n/3}{2}\right)$; (2) $H$ does not contain a perfect matching; (3) $H$ is not a subgraph of $H_{n,n/3}^2$.
\end{theorem}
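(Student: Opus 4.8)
The goal is to build a 3-graph $H$ on $n$ vertices (with $n \in 3\mathbb{Z}$, $s = n/3$) that has no isolated vertex, satisfies the Ore-type bound $\sigma_2(H) > 2(\binom{n-1}{2} - \binom{2n/3}{2})$, has no perfect matching, and is genuinely not a subgraph of $H_{n,n/3}^2$. The natural strategy is to perturb $H_{n,n/3}^2$ itself: recall $H_{n,n/3}^2$ has vertex set $S \cup T$ with $|S| = n/3 + 1$, $|T| = 2n/3 - 1$, and all triples meeting $T$ in at least two vertices. Its obstruction to a perfect matching is a parity/space obstruction: every edge uses at least two vertices of $T$, but $3 \cdot |T|/2$-type counting shows you cannot cover the $n/3+1$ vertices of $S$ using only $2n/3 - 1$ vertices of $T$ two-at-a-time together with... actually the cleanest way to see it: any matching covers vertices of $S$ only through edges with exactly two $T$-vertices, and $\lfloor (2n/3-1)/2 \rfloor = n/3 - 1 < n/3 + 1$, so at most $n/3 - 1$ vertices of $S$ get covered. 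The idea is to add a small controlled set of new edges (for instance, a bounded number of edges entirely inside $S$, or edges meeting $S$ in all three vertices on a tiny sub-structure) so that the matching obstruction survives but the graph is no longer a sub-3-graph of $H_{n,n/3}^2$.

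**The construction I would try.** Partition $V(H)$ into $S$ and $T$ with $|S| = n/3 + 1$ exactly as in $H_{n,n/3}^2$, take all the edges of $H_{n,n/3}^2$, and additionally pick one distinguished vertex, say $v_0 \in S$, and add all triples contained in $S$ that contain $v_0$ — or, if that destroys the no-perfect-matching property, add instead a single triple inside $S$ or a small union of triples forming a structure (like a "book" or a tight path) on few vertices of $S$ that still cannot help cover $S$ efficiently. The degree bookkeeping is the easy part: adding edges only increases degrees, so the Ore condition $\sigma_2$ can only go up relative to $H_{n,n/3}^2$, and one checks $\sigma_2(H_{n,n/3}^2) = (2s-2)(n-1) = (2n/3 - 2)(n-1)$, which one compares against $2(\binom{n-1}{2} - \binom{2n/3}{2})$; since $\sigma_2(H_{n,s}^2) \ge \sigma_2(H_{n,s}^1)$ for $s \le n/3$ (noted in the excerpt, with equality only at the boundary) one may need to add enough extra edges at a low-degree pair to make the inequality strict. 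Non-isolation and not-a-subgraph are immediate from the added edges being outside $E(H_{n,n/3}^2)$.

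**The main obstacle.** The crux is verifying that the enlarged $H$ still has no perfect matching — adding edges inside $S$ is exactly the kind of move that could create one. I would argue as follows: suppose $M$ is a perfect matching of $H$. Classify the edges of $M$ by their intersection pattern with $T$: type $(3,0),(2,1),(1,2),(0,3)$ counting ($|e\cap S|,|e\cap T|$). All original edges are type $(1,2)$ or $(0,3)$; the only new edges are type $(3,0)$ (or whatever small type we added). A type-$(1,2)$ edge covers one $S$-vertex using two $T$-vertices; a type-$(0,3)$ edge covers no $S$-vertex; a type-$(3,0)$ edge covers three $S$-vertices and zero $T$-vertices. If $a$ = number of type-$(1,2)$ edges and $b$ = number of added type-$(3,0)$ edges, then covering $S$ requires $a + 3b = n/3 + 1$, while the $T$-vertices consumed is $2a \le |T| = 2n/3 - 1$, i.e. $a \le n/3 - 1$ (using $2a$ even). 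Hence $3b \ge 2$, so $b \ge 1$: at least one added edge must be used. The key is then to make the added edges so structurally limited that $b \ge 1$ forces a contradiction with the remaining covering — e.g. if we add only edges through a single vertex $v_0$, then $b \le 1$, so $b = 1$, giving $a = n/3 - 2$, which is fine parity-wise, so this particular choice fails and I'd instead add edges whose union spans, say, only $4$ or $5$ vertices of $S$ arranged so that no matter which (at most one) added edge is used, the leftover $S$-vertices still exceed what the $T$-side can absorb — concretely, ensure the added edges all lie inside a fixed $4$-set $A \subseteq S$ with no two of them disjoint, so $b \le 1$ and using one leaves $\ge 1$ vertex of $A$ uncovered that now has no available edge. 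Getting this counting to close — choosing the minimal extra structure that both breaks "subgraph of $H_{n,n/3}^2$" and preserves "no perfect matching," while pushing $\sigma_2$ strictly above the threshold — is the delicate part; everything else is routine binomial estimation.
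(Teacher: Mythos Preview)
Your proposal contains a genuine and fatal gap in the degree bookkeeping. You write that ``adding edges only increases degrees, so the Ore condition $\sigma_2$ can only go up relative to $H_{n,n/3}^2$.'' This is false for $\sigma_2$: adding an edge can create \emph{new adjacent pairs}, and $\sigma_2$ is a minimum over adjacent pairs. In $H_{n,n/3}^2$, every vertex of $S$ has degree exactly $\binom{2n/3-1}{2}$, and no two vertices of $S$ are adjacent. The moment you add even a single triple $\{a,b,c\}\subseteq S$, the pair $(a,b)$ becomes adjacent with
\[
\deg(a)+\deg(b)\;=\;2\binom{2n/3-1}{2}+O(1)\;=\;\frac{4}{9}n^2+O(n),
\]
whereas the threshold is
\[
2\left(\binom{n-1}{2}-\binom{2n/3}{2}\right)\;=\;\frac{5}{9}n^2+O(n).
\]
So $\sigma_2(H)$ drops by roughly $n^2/9$ below the required bound, not above it. Your fix of ``adding enough extra edges at a low-degree pair'' is not a small repair: to raise the degree of each involved $S$-vertex from $\tfrac{2}{9}n^2$ to above $\tfrac{5}{18}n^2$ you must add $\Theta(n^2)$ edges incident to it, and since all edges with two $T$-vertices are already present, these must be edges with at least two $S$-vertices --- exactly the edges that destroy the matching obstruction you are trying to preserve. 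The ``small controlled set'' philosophy cannot work here.

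The paper's construction is entirely different. It does not perturb $H_{n,n/3}^2$ at all; instead it builds a new three-part family $H^{1,2}_{n,x,y}$ with parts $R,S,T$ of sizes $x$, $2x+y$, $n-3x-y$, where the edges are the triples of type $RTT$, $STT$, $SST$, or $TTT$. Non-existence of a perfect matching follows because every edge meets $T$ in at least two vertices, so a matching has size at most $\lfloor|T|/2\rfloor = x+y \le n/3-1$. The graph is not a subgraph of $H_{n,n/3}^2$ because its independence number is $x+1$ rather than $n/3+1$. The real work (Lemma~\ref{extremegraph}) is optimising $x$ and $y$: taking $x\approx n/5$ and $y=n/3-x-1$ yields $\sigma_2(H^{1,2}_{n,x,y})=\tfrac{128}{225}n^2+O(n)$, which exceeds $\tfrac{5}{9}n^2$ since $\tfrac{128}{225}>\tfrac{5}{9}$. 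This global rebalancing of part sizes, rather than a local perturbation, is the idea you are missing.
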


In this paper, we also prove the following result which implies the result of Zhang, Zhao and Lu in \cite{zhang}.

\begin{theorem}\label{Yi1}
For all $\gamma >0$, there exists $n_0 \in \mathbb{N}$ such that the following holds for all integers $n\ge n_0$. If $H$ is a $3$-graph of order $n$ without isolated vertex and $\sigma_2(H) > (3/5+\gamma)n^2$, then $H$ contains a perfect matching if and only $H$ is not a subgraph of $H_{n,n/3}^2$.
\end{theorem}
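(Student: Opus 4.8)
\emph{Plan.} The forward implication is immediate: in $H^2_{n,n/3}$ every edge meets the part $T$ of size $2n/3-1$ in at least two vertices, so a matching of size $n/3$ would need at least $2n/3>|T|$ vertices of $T$; hence $H^2_{n,n/3}$, and so every subhypergraph of it, has no perfect matching. From now on assume $3\mid n$ is large, $\sigma_2(H)>(3/5+\gamma)n^2$, $H$ has no isolated vertex, and $H$ is \emph{not} a subhypergraph of $H^2_{n,n/3}$; we must exhibit a perfect matching. We first record a key consequence of the hypothesis: every vertex $v$ lies in an edge, hence is adjacent to some $u$, so $\deg(v)\ge\sigma_2(H)-\deg(u)>(3/5+\gamma)n^2-\binom{n-1}{2}$, giving $\delta_1(H)\ge(1/10+\gamma-o(1))n^2$. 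Thus $H$ has no low-degree vertices and the link of each vertex is a dense graph, which is what makes the absorption method usable here. Fix constants $0<\epsilon,\eta\ll\gamma$ (small in terms of $\gamma$, as needed below) and take $n$ large.

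The argument combines absorption with a stability dichotomy. \emph{Step 1 (absorbing matching).} Using $\delta_1(H)=\Omega(n^2)$ — together with the degree-sum hypothesis to locate pairs of positive codegree inside links when it helps — one shows every triple $\{x,y,z\}$ of vertices lies in $\Omega(n^3)$ absorbing gadgets (say $a_1,\dots,a_6$ with $a_1a_2a_3,a_4a_5a_6\in H$ and $xa_1a_2,ya_3a_4,za_5a_6\in H$); the standard probabilistic selection then yields a sublinear matching $M^*$, with $|V(M^*)|=\Theta(\epsilon n)$, such that $H[V(M^*)\cup W]$ has a perfect matching for every $W\subseteq V(H)\setminus V(M^*)$ with $3\mid|W|$ and $|W|\le3\epsilon n$. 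Since $\sigma_2$ drops by at most $2|V(M^*)|n$ under deletion of $V(M^*)$, the hypergraph $H':=H-V(M^*)$ still has no isolated vertex, satisfies $3\mid|V(H')|$ and $\sigma_2(H')>(3/5+\gamma/2)|V(H')|^2$, and it now suffices to cover all but at most $3\epsilon n$ vertices of $H'$ by a matching.

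\emph{Step 2 (stability dichotomy).} The core of the proof is a stability lemma: if $G$ is a $3$-graph of large order $m$ with no isolated vertex, $\sigma_2(G)>(3/5+\gamma/2)m^2$, and $G$ has \emph{no} matching of size $(1/3-\epsilon)m$, then there is a set $T_0\subseteq V(G)$ with $|T_0|=2m/3-1$ such that all but at most $\eta m^3$ edges of $G$ have at least two vertices in $T_0$ — i.e.\ $G$ is ``$\eta$-close to a subhypergraph of $H^2_{m,m/3}$''. Applying this to $H'$: either $H'$ has a matching of size $(1/3-\epsilon)|V(H')|$, which covers all but $\le3\epsilon n$ vertices and hence, by Step 1, completes to a perfect matching of $H$ — done; or $H'$ is $\eta$-close to a subhypergraph of $H^2_{|V(H')|,|V(H')|/3}$, whence (extending the partition over $V(M^*)$) $H$ itself is $\eta'$-close to a subhypergraph of $H^2_{n,n/3}$ for some small $\eta'=\eta+\Theta(\epsilon)$, and we pass to Step 3, now working with all of $H$. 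This lemma, at the precise threshold $\tfrac35n^2$, is the main obstacle: one must show the \emph{only} near-extremal configurations consistent with large degree sum, no isolated vertex and no near-perfect matching are the subhypergraphs of $H^2_{n,n/3}$, ruling out every competing space or divisibility barrier — a short computation shows the $H^1$-type barriers and the two-layer space barriers have degree sum only about $\tfrac59n^2$ and $\tfrac12n^2$, which is exactly why the constant is $3/5$ and why (by Theorem~\ref{thextremegraph}) the ``$H$ is a subhypergraph of $H^2_{n,n/3}$'' clause is unavoidable. One natural proof of the lemma passes to a cluster hypergraph via the weak (cylinder) regularity lemma, observes that the absence of a near-perfect matching forces a fractional vertex cover of weight $<m/3$, classifies those covers, and transfers back; alternatively one argues combinatorially from a maximum matching together with Theorem~\ref{the1} (which already forces a matching of size about $\tfrac{3}{10}n$, so the uncovered set is linear and heavily constrained).

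\emph{Step 3 (extremal case).} We may now fix a partition $V(H)=S_0\cup T_0$ with $|T_0|=2n/3-1$, $|S_0|=n/3+1$, and at most $\eta' n^3$ \emph{bad} edges (edges with at most one vertex in $T_0$). A cleaning step flags the $O(\sqrt{\eta'}\,n)$ \emph{atypical} vertices — those of $S_0$ lying in too many bad edges, and those of $T_0$ with too few neighbouring pairs inside $T_0$ — which will be matched first. Since $H$ is not a subhypergraph of $H^2_{n,n/3}$, in particular not via $(S_0,T_0)$, there is at least one bad edge $e_0$, which after possibly swapping for another bad edge we take to avoid all atypical vertices. Now $|T_0|=2n/3-1$ is odd while every \emph{good} edge (two vertices in $T_0$, one in $S_0$) uses an even number of vertices of $T_0$, so we use $e_0$ together with at most two more edges ``of the wrong type'' — a further bad edge, or an edge lying inside $T_0$ — to correct this parity and simultaneously to cover the atypical vertices; then, using the large good-degrees guaranteed for the remaining vertices, we complete — greedily, and finally by a defect-Hall / absorbing-inside-$T_0$ argument — a system of vertex-disjoint good edges matching every remaining vertex of $S_0$ to a disjoint pair of remaining vertices of $T_0$. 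The bookkeeping forces this system to consume precisely the remaining vertices of $T_0$, yielding a perfect matching of $H$. Step 3 is technically delicate because of the parity of $|T_0|$ and the need to reroute the atypical vertices at the same time, but it is conceptually routine once the partition is in hand — and it is essentially the argument behind the extremal result of Zhang, Zhao and Lu recovered by the theorem.
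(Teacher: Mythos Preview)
Your outline follows the standard absorption--stability--extremal template, but it leaves the two decisive steps as assertions rather than arguments, and it diverges substantially from what the paper actually does.

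\textbf{The gap.} Your Step~2 stability lemma (``if $\sigma_2(G)>(3/5+\gamma/2)m^2$ and $G$ has no matching of size $(1/3-\epsilon)m$, then $G$ is $\eta$-close to a subhypergraph of $H^2_{m,m/3}$'') is precisely the heart of the matter, and you only gesture at two possible proofs (regularity plus fractional covers, or a combinatorial argument from a maximum matching). Neither sketch is close to a proof: the fractional-cover classification under a \emph{degree-sum} (not minimum-degree) hypothesis is genuinely delicate at the exact constant $3/5$, and Theorem~\ref{the1} only guarantees a matching of size about $(3/10)n$, which leaves a linear-size uncovered set whose structure you have not analysed. Your Step~3 is similarly schematic: the appeal to ``the argument behind the extremal result of Zhang, Zhao and Lu'' is at the wrong threshold (their result sits at $\sigma_2\approx(2/3)n^2$, not $(3/5)n^2$), so the extremal analysis here must be redone under a weaker hypothesis, and your parity/cleaning sketch does not address this. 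A smaller point: your six-vertex absorber requires codegree-type information (e.g.\ a choice of $a_3\in N(a_1,a_2)$) that neither $\delta_1(H)=\Omega(n^2)$ nor the $\sigma_2$ bound supplies directly; the paper's absorber (Lemma~\ref{absorbinglemma1}) is a larger $2k^2$-set engineered to avoid exactly this issue.

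\textbf{What the paper does instead.} The paper avoids stability entirely. It sets $U=\{v:\deg(v)>(3/10+\gamma/2)n^2\}$ and $W=V\setminus U$; since $W$ is independent, $|W|\ge n/3+1$ forces $H\subseteq H^2_{n,n/3}$. For $|W|\le n/3$ it splits into two regimes. When $|W|$ is within $\tau n$ of $n/3$ (Lemma~\ref{lemma11181}) the degrees in $U$ and $W$ can be sharpened enough to build a perfect matching directly, via link-graph counting lemmas (Lemmas~\ref{lemmaa2}--\ref{lemma120645778}) applied to a maximum matching. When $|W|<(1/3-\tau)n$ (Lemma~\ref{lemma11182}) the paper uses its absorbing lemma, then stratifies the remaining vertices into four degree classes $A_1,\dots,A_4$ and, by a sequence of counting subclaims (bounding $\sum|L_u(e_1,e_2)|$ and $\sum|L_u(e,V\setminus V(M))|$), shows a matching leaves at most a constant $C$ vertices uncovered, which the absorber then swallows. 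No closeness to $H^2$ is ever invoked; the case split on $|W|$ plays the role your stability dichotomy was meant to play, but with explicit inequalities in place of a structural lemma.
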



This paper is organized as follows. We prove Theorem \ref{thextremegraph} in Section 2.   In Section 3, we give an outline of the proof along with some preliminary results. We prove a degree sum version absorbing lemma in Section 4. In Sections 5 and 6, we prove  Theorem \ref{Yi1}.

\vskip.2cm

\n{\bf Notation}
Given a graph $G$ and a vertex $u$ in $G$, $N_G(u)$ is the set of neighbors of $u$ in $G$.
Assume $H$ is a 3-uniform hypergraph. 
For $u,v\in V(H)$, denote $N_H(u,v)=\{w\in V(H):\{u,v,w\} \in E(H)\}$ (the subscript is often omitted when $H$ is clear from the context).
Given a vertex $v\in V(H)$ and a subset $A \subseteq V(H)$, we define \emph{the link} $L_{v}(A)=\{uw:u, w\in A\mbox{~and~}\{u,v,w\}\in E(H)\}$. When $A$ and $B$ are two disjoint subsets of $V(H)$, we let $L_{v}(A,B)=\{uw:u\in A,~ w\in B \mbox{~and~}\{u,v,w\}\in E(H)\}$. For a set $T$, we use $\binom{T}{\ell}$ to denote the set of all the $\ell$-element subsets of $T$. Note that $T$ may be a vertex set or an edge set in the context. Given a vertex subset $S$ of $H$, we call $S$ an independent set of $H$ if no two vertices of $S$ are adjacent. Let the maximum size over all the independent sets of $H$ be the independent number of $H$. Let $G_1$,$\cdots$,$G_t$ be $t$ graphs with the same vertex set. A set of pairwise
disjoint edges, one from each $G_i$, is called a rainbow matching for $G_1$,$\cdots$,$G_t$.

\vskip.2cm

\section{Proof of Theorem \ref{thextremegraph}}\label{counterexmapl}

In this section, we prove Theorem \ref{thextremegraph}.  Let $x,y$ be positive integers such that $n \geq 3x+3y+3 $ and let $H^{1,2}_{n,x,y}$ denote the 3-graph of order $n$, whose vertex set is partitioned into three sets $R$, $S$ and $T$ of size $x$, $2x+y$ and $n-3x-y$, respectively, and whose edge set consists of all the following triples: (1) one vertex in $R$ and the other two vertices in $T$; (2) one vertex in $S$ and the other two vertices in $T$;  (3) one vertex in $T$ and the other two vertices in $S$; (4) all three vertices in $T$. Obviously the size of maximum matching in $H^{1,2}_{n,x,y}$ is at most $x+y$, so  $H^{1,2}_{n,x,y}$ does not contain a perfect matching as $x+y \leq n/3-1$.  We also know that  the independent number of $H^{1,2}_{n,x,y}$ is $x+1$ and the independent number of  $H^{2}_{n,n/3}$ is $n/3+1$. Therefore  $H^{1,2}_{n,x,y}$ is not a subgraph of $H_{n,n/3}^2$  as $x \leq n/3-1$.
\begin{figure}[!htbp]
\begin{center}
\includegraphics[scale=0.15]{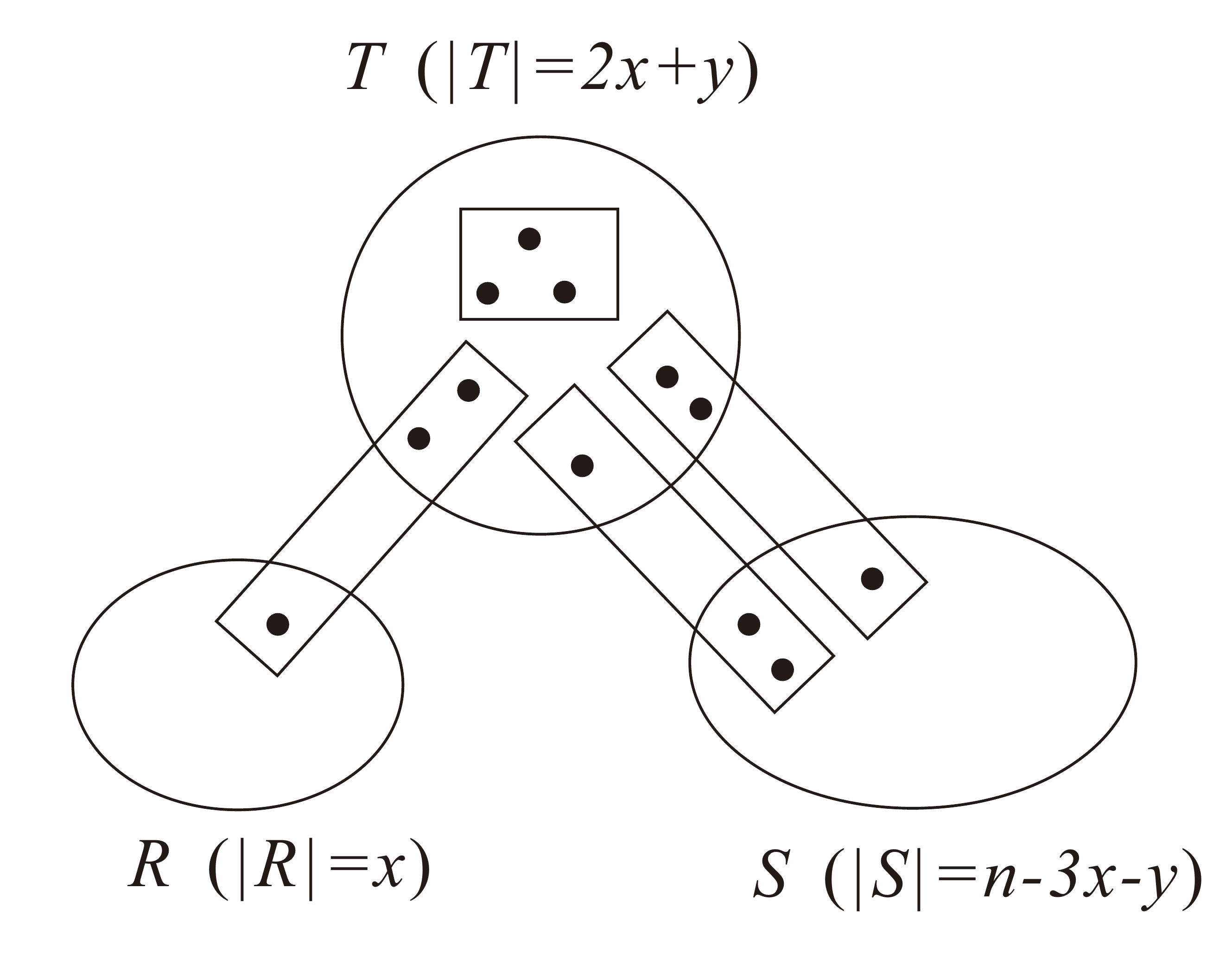}\\
\caption{The graph $H^{1,2}_{n,x,y}$.}\label{Figure1}
\end{center}
\end{figure}
The following lemma shows that we can optimize $x$ and $y$ so that $H^{1,2}_{n,x,y}$ has large vertex degree sum.
\begin{lemma} \label{extremegraph}
Let $n\in 3\mathbb{Z}$. Then
$$\max\Big\{\sigma_2(H^{1,2}_{n,x,y}): 0 \leq x,y \leq \frac{n}{3}-1,x+y \leq \frac{n}{3}-1\Big\}= \left\{
\begin{array}{ll}
(128/225)n^2-(12/5)n+2  & \mbox{if $n\equiv 0\,(\text{mod} \,5)$}\\
(128/225)n^2-(187/75)n+62/25 & \mbox{if $n\equiv 1(\text{mod} \,5)$}\\
(128/225)n^2-(184/75)n+38/25 & \mbox{if $n\equiv 2(\text{mod} \,5)$}\\
(128/225)n^2-(176/75)n+48/25 & \mbox{if $n\equiv 3(\text{mod} \,5)$}\\
(128/225)n^2-(173/75)n+42/25 & \mbox{if $n\equiv 4(\text{mod} \,5)$}.
\end{array}\right.$$
\end{lemma}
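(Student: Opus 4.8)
The plan is to evaluate $\sigma_2(H^{1,2}_{n,x,y})$ in closed form and then solve the ensuing discrete optimization. First I would compute the degree of a vertex in each of the three parts, recalling that $|R|=x$, $|S|=2x+y$ and $|T|=n-3x-y$: a vertex of $R$ lies only in edges of type (1), so $\deg(r)=\binom{|T|}{2}$; a vertex of $S$ lies only in edges of types (2) and (3), so $\deg(s)=\binom{|T|}{2}+|T|(|S|-1)$; and a vertex of $T$ lies in edges of all four types, so $\deg(t)=(|R|+|S|)(|T|-1)+\binom{|S|}{2}+\binom{|T|-1}{2}$. Next I would record which pairs of parts are adjacent — precisely $RT$, $SS$, $ST$ and $TT$ — and observe that on the whole admissible range $\deg(r)\le\deg(s)$ and $\deg(r)\le\deg(t)$, so that the minimum degree sum over adjacent pairs is
\[
\sigma_2(H^{1,2}_{n,x,y})=\min\bigl\{\deg(r)+\deg(t),\ 2\deg(s)\bigr\}.
\]
Using $\binom{|T|}{2}+\binom{|T|-1}{2}=(|T|-1)^2$ together with $|R|+|S|=n-|T|$, the two quantities reduce to the explicit quadratics $A(x,y)=(n-1)(|T|-1)+\binom{|S|}{2}$ and $B(x,y)=|T|\bigl(|T|+2|S|-3\bigr)$; a few boundary cases in which one part is very small are handled separately.

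It then remains to maximize $f:=\min\{A,B\}$ over the lattice points of the triangle. The structural observation is that $f$ has no local maximum in the interior: $A$ is convex and, along the one direction in which its Hessian degenerates, strictly linearly decreasing, so $A$ has no interior maximum; $B$ has an indefinite Hessian and so has none either; and at an interior point where $\min\{A,B\}$ were locally maximal the gradients $\nabla A$ and $\nabla B$ would have to point in opposite directions, which a short computation excludes on the relevant part of the triangle. So the maximum lies on the boundary, and — after restricting to those parameters for which $H^{1,2}_{n,x,y}$ is a genuine counterexample, in particular has no perfect matching — a comparison of the three sides places it on the edge $x+y=n/3-1$. Substituting $y=\tfrac{n}{3}-1-x$ there turns $f$ into $\min\{a(x),b(x)\}$, where $a$ is increasing and $b$ decreasing in a neighbourhood of their unique crossing point $x^{\star}$, with $x^{\star}=\tfrac{n}{5}+O(1)$; hence over the reals the optimum sits at $x^{\star}$ and has value $\tfrac{128}{225}n^2+O(n)$.

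Finally, since $x^{\star}$ lies within $O(1)$ of $n/5$, the optimal integer value of $x$ is one of the two integers nearest $n/5$; which of these wins, and the exact value of $f$ there, depends only on $n\bmod 5$ (recall $3\mid n$). Evaluating $f$ at these few candidate lattice points in each residue class and simplifying produces the five displayed formulas. I expect the genuine difficulty to be twofold: making the ``no interior maximum, reduce to the edge $x+y=n/3-1$'' step completely rigorous, since $\min$ of two quadratics is neither convex nor concave and the argument must proceed through the gradients rather than a convexity principle; and pushing the integrality bookkeeping through without slips — routine, but exactly what produces the five distinct lower-order corrections.
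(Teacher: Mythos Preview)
Your overall plan --- reduce $\sigma_2$ to $\min\{A,B\}$, push the optimum to a boundary edge of the triangle, and then do integer bookkeeping near $x\approx n/5$ --- matches the paper in shape, but two points deserve attention.

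First, your degree formulas are not the ones that produce the stated values. The paper computes $\deg(u_1)=\binom{2x+y}{2}$, $\deg(u_2)=\binom{2x+y}{2}+(2x+y)(n-3x-y-1)$, $\deg(u_3)=x(2x+y-1)+\binom{2x+y-1}{2}+(2x+y-1)(n-3x-y)+\binom{n-3x-y}{2}$; in other words $|T|=2x+y$ and $|S|=n-3x-y$, the reverse of what the definition literally says (and the reading under which the ``max matching $\le x+y$'' and ``independence number $=x+1$'' claims both hold). With your $|T|=n-3x-y$, $|S|=2x+y$, one checks $\partial A/\partial y=|S|-n+\tfrac12<0$ and $\partial B/\partial y=3-2|S|<0$ on the whole range, so both $A$ and $B$ \emph{decrease} in $y$; the optimum would then sit at $y=0$ rather than on $x+y=n/3-1$, and the leading coefficient comes out as $\tfrac{12}{25}$, not $\tfrac{128}{225}$. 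So the edge you single out is not reached by your formulas.

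Second, once the correct degrees are used, the reduction to $x+y=n/3-1$ is much simpler than your Hessian/gradient route. For fixed $x$, each of $2\deg(u_2)$ and $\deg(u_1)+\deg(u_3)$ is an explicit quadratic in $y$ whose vertex lies to the right of (respectively left of) the interval $[0,\,n/3-1-x]$, so both are monotone increasing there and the inner maximum is attained at $y=n/3-1-x$; no interior-maximum argument for $\min\{A,B\}$ is needed. The paper then sets $2f_1(x)-f_2(x)=-5x^2+(n+3)x-\tfrac{n}{3}$, reads off its sign changes, and concludes that the outer maximum equals $f_2(\lfloor (n+1)/5\rfloor)$ or $2f_1(\lceil (n+2)/5\rceil)$ depending on $n\bmod 5$, which after substitution gives the five displayed expressions.
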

\begin{proof} For any vertex $u_1 \in R, u_2 \in S, u_3 \in T$, we obtain
$$ \deg(u_1) = \binom{2x+y}{2}, \,\, \deg(u_2) = \binom{2x+y}{2}+\binom{2x+y}{1}\binom{n-3x-y-1}{1}\,\, \text{and}$$
$$ \deg(u_3) = \binom{2x+y-1}{1}\binom{x}{1}+ \binom{2x+y-1}{2}+\binom{2x+y-1}{1}\binom{n-3x-y}{1}+\binom{n-3x-y}{2}. $$
It is not difficult to check that  $\deg(u_1) < \deg(u_2) < \deg(u_3)$. Therefore
\begin{align*}
&\max\Big\{\sigma_2(H^{1,2}_{n,x,y}): 0 \leq x,y \leq \frac{n}{3}-1,x+y \leq \frac{n}{3}-1\Big\}\\
=& \max_{0 \leq x \leq \frac{n}{3}-1} \max_{0 \leq y \leq \frac{n}{3}-1-x} \min\Big\{2\deg(u_2), \deg(u_1)+\deg(u_3)\Big\}.
\end{align*}

Furthermore
\begin{align*}
\deg(u_2) =&  \binom{2x+y}{2}+\binom{2x+y}{1}\binom{n-3x-y-1}{1}=  -\frac{1}{2}y^2+\Big(n-3x-\frac{3}{2}\Big)y+2xn-4x^2-3x.
\end{align*}
Note that the quadratic function $ -\frac{1}{2}y^2+\Big(n-3x-\frac{3}{2}\Big)y$ is maximized at $y= n-3x-\frac{3}{2}$.  Since $y \leq \frac{1}{3}n-x-1 \leq n-3x-\frac{3}{2}$, it follows that
\begin{align*}
\deg(u_2) \leq &   -\frac{1}{2}\Big(\frac{1}{3}n-x-1 \Big)^2+\Big(n-3x-\frac{3}{2}\Big)\Big(\frac{1}{3}n-x-1 \Big)+2xn-4x^2-3x \\
= & -\frac{3}{2}x^2+\Big(\frac{1}{3}n+\frac{1}{2}\Big)x+\frac{5}{18}n^2-\frac{7}{6}n+1\overset{\triangle}= f_1(x).
\end{align*}
Note that the quadratic function $ -\frac{3}{2}x^2+\Big(\frac{1}{3}n+\frac{1}{2}\Big)x$ is maximized at $x= \frac{1}{9}n+\frac{1}{6}$.

We also know
\begin{align*}
\deg(u_1)+\deg(u_3) =&  \binom{2x+y}{2}+ \binom{2x+y-1}{2}+(2x+y-1)(n-2x-y)+\binom{n-3x-y}{2} \\
 = & \frac{1}{2}y^2+\Big(3x-\frac{1}{2}\Big)y+\frac{1}{2}n^2+\frac{9}{2}x^2-xn-\frac{3}{2}n-\frac{1}{2}x+1.
\end{align*}
Note that the quadratic function $  \frac{1}{2}y^2+\Big(3x-\frac{1}{2}\Big)y$ is minimized at $y= -3x+\frac{1}{2} \leq \frac{1}{2}$.  Since $y$ is an integer, this quadratic function  increases as $y$ increases when $0 \leq y \leq n/3-x-1$ and  is maximized at $y=\frac{1}{3}n-x-1$. It follows that
\begin{align*}
\deg(u_1)+\deg(u_3) \leq & \frac{1}{2}\Big(\frac{1}{3}n-x-1 \Big)^2+\Big(3x-\frac{1}{2}\Big)\Big(\frac{1}{3}n-x-1 \Big)+\frac{1}{2}n^2+\frac{9}{2}x^2-xn-\frac{3}{2}n-\frac{1}{2}x+1
  \\
= & 2x^2-\Big(\frac{1}{3}n+2\Big)x+\frac{5}{9}n^2-2n+2\stackrel{\triangle}{=}f_2(x).
\end{align*}
Note that the quadratic function $2x^2-\Big(\frac{1}{3}n+2\Big)x$ is minimized at $x= \frac{1}{12}n+\frac{1}{2}$.

Note that for fixed $x$, if both $\deg(u_2)$ and $\deg(u_1)+\deg(u_3)$ increase as $y$ increases, then
\begin{align*}
&\max_{0 \leq x \leq \frac{n}{3}-1} \max_{0 \leq y \leq \frac{n}{3}-1-x} \min\Big\{2\deg(u_2), \deg(u_1)+\deg(u_3)\Big\} \\
=&\max_{0 \leq x \leq \frac{n}{3}-1} \min\Big\{\max_{0 \leq y \leq \frac{n}{3}-1-x}2\deg(u_2),\max_{0 \leq y \leq \frac{n}{3}-1-x}(\deg(u_1)+\deg(u_3)) \Big\}.
\end{align*}
We also know that both $\deg(u_2)$ and $\deg(u_1)+\deg(u_3)$ indeed increase as $y$ increases when $0 \leq y \leq n/3-x-1$ from the discussion above.  Therefore
\begin{align*}
&\max\Big\{\sigma_2(H^{1,2}_{n,x,y}): 0 \leq x,y \leq \frac{n}{3}-1,x+y \leq \frac{n}{3}-1\Big\}= \max_{0 \leq x \leq n/3-1}\Big\{\min\Big\{2f_1(x),f_2(x)\Big\}\Big\}.
\end{align*}
We further have
\begin{align*}
2f_1(x)-f_2(x) =& 2 \Big(-\frac{3}{2}x^2+\Big(\frac{1}{3}n+\frac{1}{2}\Big)x+\frac{5}{18}n^2-\frac{7}{6}n+1\Big)-\Big(2x^2-\Big(\frac{1}{3}n+2\Big)x+\frac{5}{9}n^2-2n+2 \Big) \\
= &  -5x^2+(n+3)x-\frac{1}{3}n.
\end{align*}
Note that the quadratic function $ -5x^2+(n+3)x$ is maximized at $x= \frac{1}{10}n+\frac{3}{10}$. We also know that $2f_1(0)-f_2(0) =-\frac{1}{3}n$, $2f_1(1)-f_2(1) =\frac{2}{3}n-2$, $2f_1(\frac{n}{5}+\frac{1}{5})-f_2(\frac{n}{5}+\frac{1}{5}) =\frac{1}{15}n+\frac{2}{5}$ and $2f_1(\frac{n}{5}+\frac{2}{5})-f_2(\frac{n}{5}+\frac{2}{5}) =-\frac{2}{15}n+\frac{2}{5}$.
Therefore
$$\max\Big\{\sigma_2(H^{1,2}_{n,x,y}): 0 \leq x,y \leq \frac{n}{3}-1,x+y \leq \frac{n}{3}-1\Big\}= \max\Big\{ 2f_1(0), f_2(1),  f_2\Big(\Big\lfloor \frac{n+1}{5} \Big\rfloor\Big), 2f_1\Big(\Big\lceil \frac{n+2}{5} \Big\rceil\Big)\Big \}.$$
It is easy to check  that
$$\max\Big\{\sigma_2(H^{1,2}_{n,x,y}): 0 \leq x,y \leq \frac{n}{3}-1,x+y \leq \frac{n}{3}-1\Big\}= \left\{
\begin{array}{ll}
f_2\Big(\lfloor \frac{n+1}{5} \rfloor\Big)  & \mbox{if $n\equiv 0\,(\text{mod} \,5)$}\\
f_2\Big(\lfloor \,\, \frac{n+1}{5} \rfloor\Big)  & \mbox{if $n\equiv 1\,(\text{mod} \,5)$}\\
2f_1\Big(\lceil \frac{n+2}{5} \rceil\Big)    & \mbox{if $n\equiv 2\,(\text{mod} \,5)$}\\
2f_1\Big(\lceil \frac{n+2}{5} \rceil\Big)   & \mbox{if $n\equiv 3\,(\text{mod} \,5)$}\\
f_2\Big(\lfloor \frac{n+1}{5} \rfloor\Big)   & \mbox{if $n\equiv 4\,(\text{mod} \,5)$}.
\end{array}\right.$$
It follows that $$\max\Big\{\sigma_2(H^{1,2}_{n,x,y}): 0 \leq x,y \leq \frac{n}{3}-1,x+y \leq \frac{n}{3}-1\Big\}= \left\{
\begin{array}{ll}
(128/225)n^2-(12/5)n+2  & \mbox{if $n\equiv 0\,(\text{mod} \,5)$}\\
(128/225)n^2-(187/75)n+62/25   & \mbox{if $n\equiv 1\,(\text{mod} \,5)$}\\
(128/225)n^2-(184/75)n+38/25    & \mbox{if $n\equiv 2\,(\text{mod} \,5)$}\\
(128/225)n^2-(176/75)n+48/25   & \mbox{if $n\equiv 3\,(\text{mod} \,5)$}\\
(128/225)n^2-(173/75)n+42/25   & \mbox{if $n\equiv 4\,(\text{mod} \,5)$}.
\end{array}\right.$$
\end{proof}

By Lemma \ref{extremegraph}, if $n\equiv 0\,(\, \text{mod} \,3)$ and $n\equiv 0\,(\text{mod} \,5)$, we let $x=\lfloor \frac{n+1}{5} \rfloor=\frac{n}{5}$ and $y=\frac{n}{3}-x-1$, then we obtain a counterexample  $H^{1,2}_{n,x,y}$ for Conjecture \ref{con2} as
$\sigma_2(H^{1,2}_{n,x,y}) = \frac{128}{225}n^2-\frac{12}{5}n+2  >  2\Big(\binom{n-1}{2}-\binom{\frac{2}{3}n}{2}\Big).$  We could obtain a similar construction for $n\equiv 0\,(\, \text{mod} \,3)$ and  $n\equiv 1,2,3,4\,(\text{mod} \,5)$. Therefore Conjecture \ref{con2} does not hold when $s=n/3$. In fact, we can make a little adjustment to $H^{1,2}_{n,x,y}$ to obtain a counterexample  of Conjecture \ref{con2} when $s$ is close to $n/3$. Motivated by this, we make  the following conjecture
\begin{conjecture}\label{newconjecture}
There exists $n_0 \in \mathbb{N}$ such that the following holds. Suppose that $H$ is a $3$-graph of order $n \geq n_0$ without isolated vertex. If $\sigma_2(H) > \max\Big\{\sigma_2(H^{1,2}_{n,x,y}): 0 \leq x,y \leq \frac{n}{3}-1,x+y \leq \frac{n}{3}-1\Big\}$, then $H$ contains no perfect matching if and only if $H$ is a subgraph of $H_{n,n/3}^2$.
\end{conjecture}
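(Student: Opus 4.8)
The plan is to use the absorption method together with a stability analysis around the extremal hypergraph $H^2_{n,n/3}$. The "only if" direction is immediate: if $H$ is a subgraph of $H^2_{n,n/3}$ then its part $S$ is an independent set of size $n/3+1$, and since every edge meets $S$ in at most one vertex, $n/3$ pairwise disjoint edges cover at most $n/3<|S|$ vertices of $S$, so $H$ has no perfect matching. Moreover $H$ is a subgraph of $H^2_{n,n/3}$ if and only if its independence number $\alpha_1(H)$ is at least $n/3+1$ (one direction is the above; for the other, take an $(n/3+1)$-subset of a largest independent set as the new part $S$). So for the "if" direction it suffices to prove: if $H$ has no isolated vertex, $\sigma_2(H)>(3/5+\gamma)n^2$, and $\alpha_1(H)\le n/3$, then $H$ has a perfect matching. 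Fix constants $0<\beta\ll\alpha\ll\gamma$. A useful by-product of the hypotheses is that $H$ has large minimum vertex degree: every vertex lies in an edge, and applying the degree-sum condition to that edge together with the trivial bound $\deg(\cdot)\le\binom{n-1}{2}<n^2/2$ gives $\delta_1(H)>(1/10+\gamma)n^2$; note this is still well below the perfect-matching threshold $m_1(3,n)=\binom{n-1}{2}-\binom{2n/3}{2}\approx\tfrac{5}{18}n^2$, so the degree-sum condition has to be used in earnest.

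The first step (Section~4) is a degree-sum absorbing lemma: there is a matching $M$ in $H$ with $|V(M)|\le\beta n$ such that $H[V(M)\cup W]$ has a perfect matching for every $W\subseteq V(H)\setminus V(M)$ with $3\mid|W|$ and $|W|\le\beta^2 n$. As usual this follows once one shows that every triple $e$ one might need to absorb has $\Omega(n^6)$ absorbing $6$-sets $A$ — disjoint from $e$, with both $H[A]$ and $H[A\cup e]$ perfectly matched — after which $M$ is obtained by a random selection plus a deletion step. The point requiring real work here, and the reason Section~4 is devoted to it, is that we have only a degree (equivalently, here, degree-sum) condition rather than a minimum-codegree one, so some pairs of vertices may have codegree $0$; the absorbers must therefore be routed through "good" pairs of large codegree, of which the degree condition guarantees a positive proportion, and one must check that the resulting count of absorbers is $\Omega(n^6)$.

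\textbf{Non-extremal case.} Suppose $H$ has no independent set of size $(1/3-\alpha)n$. Put $H':=H-V(M)$; it still satisfies $\delta_1(H')=\Omega(n^2)$, $\sigma_2(H')>(3/5+\gamma/2)n^2$, and has no independent set of size $(1/3-2\alpha)|H'|$. I would then show that $H'$ has a matching covering all but at most $\beta^2 n$ vertices, by a greedy argument: while more than $\beta^2 n$ vertices remain uncovered, the uncovered set $U$ is still far from independent, so a weighted count of triples inside $U$ (using the degree-sum condition, in the spirit of the proof of Theorem~\ref{the1}) produces an edge disjoint from the current matching. Absorbing the at most $\beta^2 n$ leftover vertices using $M$ then yields a perfect matching of $H$, contradicting our assumption.

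\textbf{Extremal case and the main obstacle.} It remains to treat $(1/3-\alpha)n\le\alpha_1(H)\le n/3$. Let $B$ be an independent set with $|B|\ge(1/3-\alpha)n$ and $T_0:=V(H)\setminus B$. Using the degree-sum condition and $|T_0|\le(2/3+\alpha)n$ one classifies the vertices of $B\cup T_0$ as typical or atypical, bounds the number of atypical vertices by a constant, and adjusts $B$ to a partition $V(H)=S\cup T$ with $|S|=n/3+1$, with at most $o(n^3)$ edges meeting $S$ in two or three vertices, and with every typical vertex of $T$ lying in $\Omega(n^2)$ star edges (one vertex in $S$, two in $T$) and $\Omega(n^2)$ triples inside $T$. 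Since $\alpha_1(H)\le n/3<|S|$, the set $S$ is not independent, so some edge $e$ meets $S$ in two or three vertices; I would then build a perfect matching of $H$ explicitly — take $e$, add a few further bad edges covering the $o(n)$ vertices of $S$ that lie in too few star edges, cover the remaining vertices of $S$ by vertex-disjoint star edges (a system-of-distinct-representatives argument in the link graphs), and mop up the at most $o(n)$ residual vertices of $T$ (a number divisible by $3$) by disjoint triples inside $T$, which are easily found since typical vertices of $T$ sit in $\Omega(n^2)$ such triples. This contradicts that $H$ has no perfect matching; hence $\alpha_1(H)\le n/3$ is impossible, so $\alpha_1(H)\ge n/3+1$ and $H$ is a subgraph of $H^2_{n,n/3}$, completing the proof. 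The main obstacle is precisely this extremal case, and within it the two delicate points: (i) producing, from an arbitrary large independent set $B$, a partition $(S,T)$ of the exact size $|S|=n/3+1$ whose number of "bad" edges is only $o(n^3)$ — a vertex adjacent to many pairs of $B$ must be shown to "belong" in $T$, using the degree-sum condition to pin down its partners' degrees — and (ii) running the explicit perfect-matching construction in the presence of the up to $\alpha n^3$ exceptional edges and the $o(n)$ deficient vertices of $S$. It is in making these two steps go through that the constant $3/5$ in the hypothesis is actually consumed, and a secondary technical hurdle is establishing the $\Omega(n^6)$ bound on absorbers in the degree-sum absorbing lemma.
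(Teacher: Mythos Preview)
The statement you are attempting is a \emph{conjecture} that the paper does not prove; it is posed as an open problem immediately after the counterexample of Section~\ref{counterexmapl}. The paper establishes only the weaker Theorem~\ref{Yi1}, whose hypothesis $\sigma_2(H)>(3/5+\gamma)n^2$ is strictly stronger than the conjectural threshold, since Lemma~\ref{extremegraph} gives $\max_{x,y}\sigma_2(H^{1,2}_{n,x,y})=(128/225+o(1))n^2$ and $128/225\approx 0.569<3/5$. Your proposal silently replaces the conjecture's hypothesis with $\sigma_2(H)>(3/5+\gamma)n^2$ at the very first quantitative step (``it suffices to prove: if $\sigma_2(H)>(3/5+\gamma)n^2\dots$''), so even if everything afterwards were airtight you would only be re-proving Theorem~\ref{Yi1}, not the conjecture.

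The gap is not cosmetic. The family $H^{1,2}_{n,x,y}$, optimised at $x\approx n/5$, has independence number $x+1\approx n/5$, which for small $\alpha$ lands squarely in your ``non-extremal'' branch $\alpha_1(H)<(1/3-\alpha)n$; yet it has no perfect matching. A hypergraph obtained from this $H^{1,2}_{n,x,y}$ by adding a few edges satisfies the conjecture's degree-sum hypothesis while remaining far from $H^2_{n,n/3}$ and essentially without a perfect matching, and your greedy almost-cover argument (``the uncovered set is non-independent, so a weighted count produces a disjoint edge'') cannot go through there: non-independence only yields an edge meeting the uncovered set in \emph{two} vertices, not three, and the counting that works at $3/5+\gamma$ is precisely what fails at $128/225$ because $H^{1,2}_{n,x,y}$ is tight for it. Any proof of the conjecture must confront a second near-extremal configuration besides $H^2_{n,n/3}$, and your dichotomy on $\alpha_1(H)$ does not detect it. (Even read as a sketch of Theorem~\ref{Yi1}, your outline diverges from the paper's route: there the split is on the size of the low-degree set $W$ rather than on $\alpha_1$, absorption is used when $|W|<(1/3-\tau)n$, and the almost-perfect matching is obtained not greedily but via a detailed structural analysis partitioning the residual vertices into four degree classes $A_1,A_2,A_3,A_4$.)
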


\section{Outline of the proof of Theorem \ref{Yi1}  and preliminaries}

For $0 < \gamma \ll 1$ and let $n \in 3 \mathbb{Z}$ be sufficiently large. Suppose $H$ is a $3$-graph of order $n$ without isolated vertex and $\sigma_2(H) > (3/5+\gamma)n^2$. We divide the vertex set into 2 parts $U$ and $W$.  Let $U = \{u\in V(H): \deg(u) > (3/10+\gamma/2)n^2\}$ and $W = V(H)\setminus U$.  Then any two vertices of $W$ are not adjacent; Otherwise $\sigma_2(H)\le  (3/5+\gamma)n^2$, a contradiction. If $|W| \geq \frac{1}{3}n+1$, then $|U| \leq \frac{2}{3}n-1$, $H$ is a subgraph of $H_{n,n/3}^2$ and we are done.  We thus assume that $|W| \leq \frac{1}{3}n$.  Let $$1  \gg \tau  \gg \gamma \gg \varepsilon \gg \rho \gg 1/C \gg 1/n_0 > 0.$$ We separate the cases when $(1/3-\tau)n \leq |W| \leq n/3$ and when $|W| < (1/3-\tau)n$. Namely, the following two lemmas.
\begin{lemma} \label{lemma11181}
Let $0 < \gamma \ll \tau \ll 1$, there exists $n_0$ such that the following holds. Suppose that $H$ is a $3$-graph of order $n\in 3 \mathbb{Z}$, $n \geq n_0$ without isolated vertex and $\sigma_2(H) > (3/5+\gamma)n^2$.
Let $U =\{u \in V(H): \deg(u) > \big(3/10+\gamma/2\big)n^2\}$ and $W=V(H)\setminus U$. If $(1/3-\tau)n \leq |W| \leq n/3$, then $H$ contains a perfect matching.
\end{lemma}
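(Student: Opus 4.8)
The plan is to convert the hypothesis $|W|\ge(1/3-\tau)n$ into strong degree information and then run the absorbing method. We begin with the structural constraints. Since no two vertices of $W$ are adjacent, every edge of $H$ contains at most one vertex of $W$; hence a perfect matching of $H$ must consist of exactly $|W|$ edges with one vertex in $W$ and two in $U$ (call these \emph{crossing} edges) together with $(n-3|W|)/3$ edges lying inside $U$, and $0\le n-3|W|\le 3\tau n$, so all but $O(\tau n)$ vertices of $U$ are to be covered by crossing edges.

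Next I would record the degree estimates on which everything rests. For $u\in U$, counting the edges through $u$ according to whether they lie inside $U$ or are crossing gives $\deg(u)\le\binom{|U|-1}{2}+(|U|-1)|W|$; since $|U|=n-|W|\le(2/3+\tau)n$ and $|W|\le n/3$, this is at most $(4/9+O(\tau))n^2$. Combined with $\sigma_2(H)>(3/5+\gamma)n^2$, for every $w\in W$ and every neighbour $u$ of $w$ we get $\deg(w)>(3/5+\gamma)n^2-(4/9+O(\tau))n^2>(7/45-O(\tau))n^2$; as $H$ has no isolated vertex, \emph{every} vertex of $W$ has degree $\Omega(n^2)$, so each link $L_w(U)$ is a very dense graph on $U$. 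Likewise $\binom{|U|-1}{2}\le(2/9+O(\tau))n^2<(3/10+\gamma/2)n^2<\deg(u)$, so every $u\in U$ lies in more than $(7/90-O(\tau))n^2$ crossing edges — in particular in many of them — and, symmetrically, in more than $(7/90-O(\tau))n^2$ edges inside $U$. It is precisely these inequalities that fail in the complementary range $|W|<(1/3-\tau)n$, which is why that case is treated separately.

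With this in hand I would argue as follows. First apply the degree-sum absorbing lemma of Section~4 — whose hypothesis on $\sigma_2(H)$ holds with room to spare — to reserve a small absorbing matching $M^{\ast}$ with $|V(M^{\ast})|\le\varepsilon n$ that can absorb any leftover set of the appropriate composition; concretely we will arrange the leftover to lie inside $U$ with size divisible by $3$, which $M^{\ast}$ absorbs. Next, using the density of the links $L_w(U)$ and the large degree inside $U$ (from the estimates above) together with a fractional-matching and greedy argument, cover all of $W$ — except an $O(\rho n)$-set thrown to the leftover — by crossing edges disjoint from $V(M^{\ast})$, and then take $(n-3|W|)/3$ further disjoint edges inside $U$ (these exist since the minimum inside-$U$ degree is $\Omega(n^2)$) to absorb the excess of $U$; what remains uncovered is then an $O(\rho n)$-subset of $U$ of size divisible by $3$, which $M^{\ast}$ absorbs, completing a perfect matching. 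Equivalently, the bulk step can be phrased as producing a rainbow near-perfect matching of the super-dense link graphs $\{L_w(U')\}$ on the set $U'$ of $U$-vertices not used by $M^{\ast}$ or by the inside-$U$ edges.

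The main obstacle is this bulk/endgame step: ensuring that all of $W$ can be saturated by vertex-disjoint crossing edges while leaving exactly the right small set of $U$-vertices free, since near the end $U$ is almost exhausted. The dangerous configurations are a vertex of $U$ contained in very few links, or a link $L_w(U)$ in which many vertices of $U$ are isolated; the estimates above show every $u\in U$ lies in a positive fraction of the crossing edges and that each link misses only a set bounded away from all of $U$, so any such pathology is confined to an $o(n)$-set, which is then mopped up by $M^{\ast}$. The remaining point needing care is to verify that the conclusion of the absorbing lemma is compatible with the rigid $W/U$ structure of perfect matchings in $H$ — that the absorption never has to use a crossing or inside-$U$ edge that does not exist — which again follows from the degree estimates of the second paragraph.
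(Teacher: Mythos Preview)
Your approach is quite different from the paper's: the paper does \emph{not} use the absorbing lemma at all in this regime. Instead it proves the lemma by a direct two-step extremal argument: first (Claim~\ref{lemma24}) a matching covering \emph{every} vertex of $W$ is found, and then this is extended to a perfect matching. Both steps proceed by taking a maximum such matching $M$, fixing three vertices outside it, and bounding their degree sum via the auxiliary intersection lemmas (Lemma~\ref{Yilu} for the contribution across pairs of edges of $M$, and Lemmas~\ref{Lemma4}, \ref{lemmaa2}, \ref{lemma3} for the contribution between an edge of $M$ and the leftover), arriving at a contradiction with the lower bound on the degree sum.

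Your outline has a genuine gap at precisely this point. The ``bulk step'' --- covering $W$, or all but $O(\rho n)$ of it, by vertex-disjoint crossing edges --- is the heart of the matter, and a greedy or fractional-matching argument based only on the link densities you record does not suffice. Each link $L_w(U)$ has about $(7/45)n^2$ edges, roughly $7/10$ of $\binom{|U|}{2}$; but you must extract $|W|\approx |U|/2$ pairwise disjoint edges, one per link. When greedily processing the $i$-th vertex of $W$ you have removed $2(i-1)$ vertices of $U$, destroying up to $2(i-1)(|U|-1)$ edges of the current link; already for $i$ near $(7/60)n$ this exceeds $(7/45)n^2$, so naive greedy stalls after covering only about a third of $W$, leaving $\Theta(n)$ vertices of $W$ --- far too many for the absorber, which handles only $\varepsilon^{26}n$. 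Fractional relaxation does not obviously help either: a graph of density $7/10$ on $|U|$ vertices need not even have a perfect fractional matching (it may have isolated vertices), so the rainbow version is no easier. The paper circumvents this by exploiting the maximality of $M$ together with the intersection lemmas, not by any density/greedy reasoning. One further point: you use only $\deg(u)>(3/10+\gamma/2)n^2$ from the definition of $U$, whereas the paper first observes that every $u\in U$ is adjacent to some $w\in W$ (since $\binom{|U|-1}{2}<(3/10+\gamma/2)n^2$) and then applies the $\sigma_2$ condition to obtain the much stronger bound $\deg(u)>(17/45+\gamma-\tau)n^2$; it is this sharper estimate that makes the final counting close.
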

\begin{lemma} \label{lemma11182}
Let $0 < \gamma \ll \tau \ll 1$, there exists $n_0$ such that the following holds. Suppose that $H$ is a $3$-graph of order $n\in 3 \mathbb{Z}$, $n \geq n_0$ without isolated vertex and $\sigma_2(H) > (3/5+\gamma)n^2$.
Let $U =\{u \in V(H): \deg(u) > \big(3/10+\gamma/2\big)n^2 \}$ and $W=V(H)\setminus U$. If $ |W| \leq (1/3-\tau)n$, then $H$ contains a perfect matching.
\end{lemma}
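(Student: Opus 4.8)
\emph{Strategy and set-up.} I plan to use the absorption method, combining the degree-sum absorbing lemma of Section~4 with a near-perfect matching argument adapted to the low-degree set $W$; essentially all the work lies in the latter. Recall from the outline that $W$ is independent and $|U|\ge(2/3+\tau)n$. Since every vertex of $H$ lies in an edge and $\deg(v)\le\binom{n-1}{2}$ for all $v$, the hypothesis $\sigma_2(H)>(3/5+\gamma)n^2$ forces $\deg(w)>\tfrac1{10}n^2$ for every $w\in W$; hence each link $L_w$ is a graph on $N(w)\subseteq U$ with more than $\tfrac1{10}n^2$ edges, while $\deg(u)>(3/10+\gamma/2)n^2$ for every $u\in U$. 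First I would apply the degree-sum absorbing lemma of Section~4 to obtain an absorbing matching $M^{*}$ with $|V(M^{*})|\le\rho n$ such that, for every $S\subseteq V(H)\setminus V(M^{*})$ with $3\mid|S|$ and $|S|\le 2\rho^{2}n$, the hypergraph $H[V(M^{*})\cup S]$ contains a perfect matching. It then suffices to find, in $H':=H-V(M^{*})$, a matching covering all but at most $2\rho^{2}n$ vertices: the uncovered set then has size divisible by $3$ and is absorbed by $M^{*}$, yielding a perfect matching of $H$.

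\emph{Near-perfect matching.} Write $W':=W\setminus V(M^{*})$ and $U':=U\setminus V(M^{*})$, so $|U'|\ge(2/3+\tau/2)n$, $|W'|\le(1/3-\tau)n$, and every link (now taken in $H'$) still has more than $(\tfrac1{10}-\rho)n^2$ edges. Stage (a): cover $W'$ by a matching $M_W\subseteq H'$ each of whose edges has exactly one vertex in $W'$ and two vertices in $U'$ (an $M_W$ of this shape occurs inside every perfect matching of $H$, so this step is unavoidable). I would construct $M_W$ by a rainbow-matching argument on the family $\{L_w:w\in W'\}$, exploiting that each $L_w$ has more than $(\tfrac1{10}-\rho)n^2$ edges and that $|U'|$ exceeds $2|W'|$ by a linear amount, so that the $U'$-endpoints can be chosen---one vertex of $W'$ at a time, keeping a reservoir of untouched vertices of $U'$---without any step being blocked. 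Stage (b): extend $M_W$ to a near-perfect matching $M_1$ of $H'$ by (almost) perfectly matching $U'':=U'\setminus V(M_W)$; the aim is to choose the $U'$-endpoints in Stage (a) so as to keep the degrees inside $U''$ above the near-perfect matching threshold for $H'[U'']$, so that Theorem~\ref{Kuhn2} (with $s=|U''|/3$), or a maximum-matching argument valid because the extremal obstruction is excluded in the present regime, leaves only $\le 2\rho^2 n$ vertices of $U''$ uncovered. Together with $M_W$ this covers all but $\le 2\rho^2 n$ vertices of $H'$; absorbing the remainder into $M^{*}$ finishes the proof.

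\emph{The main obstacle.} The whole argument turns on the near-perfect matching, and in particular on covering $W$ in a way that does not obstruct completing $U$. One cannot quote a minimum-degree near-perfect matching lemma, since $\delta_1(H')\approx\tfrac1{10}n^2$ is far below any such threshold; a naive greedy covering of $W$ stalls after about $n/20$ vertices; and $H[U]$ can genuinely be sparse, so ``cover $W$, then apply Theorem~\ref{Kuhn2}'' does not work off the shelf. What makes it go through is exactly the hypothesis $|W|\le(1/3-\tau)n$ of this lemma: it provides the linear slack $|U|-2|W|=\Omega(n)$ needed both to route $M_W$ through $U$ without getting stuck and to leave $H'[U'']$ dense enough to finish, supplemented by the strength of the degree-sum condition (a neighbour of a low-degree vertex of $W$ has nearly maximum degree). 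The genuinely delicate point, which I expect to be the crux, is making Stages (a) and (b) compatible---choosing the $\Theta(n)$ vertices of $U$ spent covering $W'$ so that every remaining link-edge stays available for the still-uncovered vertices of $W'$ while $H'[U'']$ stays matchable---and it is precisely when $|W|$ approaches $n/3$ and this slack vanishes that one must switch to the separate, more ``extremal'', argument of Lemma~\ref{lemma11181}.
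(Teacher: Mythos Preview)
Your high-level architecture---absorb, cover the low-degree set $W'$, extend to a near-perfect matching, then absorb the rest---is the same as the paper's. The difficulties, however, lie exactly where you flag them, and neither stage works as you describe.

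\textbf{Stage (a).} The linear slack $|U'|-2|W'|\ge 3\tau n$ is not enough to make a greedy or rainbow argument go through on the links $L_w$, because each $L_w$ is only guaranteed about $n^2/10$ edges. After you have consumed $2|W'|-O(\tau n)\approx (2/3)n$ vertices of $U'$, the surviving pool has size $O(\tau n)$; a link of density $1/10$ on $U'$ need not meet any fixed set of this size in an edge, so the last steps can genuinely be blocked. You yourself note that naive greed stalls at $n/20$, and the slack does not rescue this. The paper proves the covering claim (its Claim~\ref{Claimcoveringalllsmaler}) by a maximality-plus-counting argument: take a \emph{maximum} matching $M$ with one $W'$-vertex per edge, pick a suitable triple $Q$ partly outside $M$, and upper-bound $\sum_{u\in Q}\deg_{H'}(u)$ by slicing all edges through $Q$ according to how they meet $M$ and $V(H')\setminus V(M)$. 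The nontrivial bounds on the cross-terms are supplied by the auxiliary link-graph lemmas (Lemmas~\ref{Yilu}, \ref{lemma120645778}, \ref{Lemma4}, \ref{Lemma4444444444}), which have no analogue in your sketch.

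\textbf{Stage (b).} Matching $U''$ in isolation is the wrong target: for $u\in U$ one only has
\[
\deg_{H[U]}(u)\ \ge\ \deg_H(u)-|W|\,(|U|-1)\ \ge\ \bigl(\tfrac{3}{10}+\tfrac{\gamma}{2}\bigr)n^2-\bigl(\tfrac13-\tau\bigr)n^2,
\]
which is negative for small $\tau$, so $H[U]$ (and a fortiori $H'[U'']$, with $|U''|\approx 3\tau n$) can have minimum degree far below any threshold to which Theorem~\ref{Kuhn2} applies; nor is there any mechanism in Stage~(a) to ``steer'' the deleted vertices so as to raise degrees in what remains. The paper does not try to match $U''$. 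Instead it takes a \emph{global} maximum matching $M_2$ of $H'$ subject to covering $A_1=W'$ (with a tie-breaking rule), partitions the remaining vertices by degree class $A_2,A_3,A_4$, and shows each $A_i\setminus V(M_2)$ has \emph{constant} size. The key inputs are the K\"uhn--Osthus--Treglown link-structure lemmas (Fact~\ref{fact1}, Lemmas~\ref{perfectmatching} and~\ref{b023b033}): if too many uncovered vertices had many ``good'' pairs $EF\in\binom{M_2}{2}$ in their link, one could augment $M_2$; otherwise a careful degree-sum count, again sliced by how edges meet $M_{21}$ and $M_{22}$ and using Subclaims~\ref{claim188888}--\ref{claim1999999}, contradicts the degree hypothesis. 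This is where the assumption $|W|\le(1/3-\tau)n$ is actually spent (it caps $|M_{21}|$ in the final quadratic estimate).

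In short, the absorption step is fine, but both the covering of $W'$ and the near-perfect matching require the maximality-plus-link-counting machinery (Lemmas~\ref{lemmaa2}--\ref{b023b033}); your proposed rainbow/reservoir argument and the appeal to Theorem~\ref{Kuhn2} on $U''$ do not substitute for it.
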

When $(1/3-\tau)n \leq |W| \leq n/3$, we are able to significantly enhance the low bound of degree of each vertex in $U$ and in  $W$. This enables us to first find a matching $M_1$ that covers all the low-degree vertices of $W$, then, the larger degrees of the vertices in $U$ can also help us find a perfect matching. When $|W| < (1/3-\tau)n$, we need the following absorbing lemma which can be proved by the probabilistic  method.
\begin{lemma}\label{absorbinglemma1}
Let $k \geq 3$, $\varepsilon $ be any  positive number and $n$ be sufficiently large.  Suppose  $H$ is a $k$-uniform hypergraph of order $n$ with  no isolated vertex. If  $\deg(u)+\deg(v) > (1+2\varepsilon)\binom{n}{k-1}$ for any two vertices $u$ and $v$ that are contained in some edge of $H$, then there exists a matching $M$ in $H$ of size $|M| \leq \varepsilon^{4k+1}n$ such that for any vertex set $V' \subseteq V(H)$ of size at most $\varepsilon^{8k+2}n$ and $|V'| \equiv 0\,(\, \text{mod} \, k \,)$, there exists a matching $M'$ such that $V(M')=V(M) \cup V'$.
\end{lemma}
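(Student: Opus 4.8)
The plan is to use the standard absorbing method of R\"odl, Ruci\'nski and Szemer\'edi, adapted to the Ore-type degree condition. The key notion is that of an \emph{absorbing $k$-set}: call a $k$-set $A \subseteq V(H)$ \emph{$T$-absorbing} for a $k$-set $T$ if both $A$ and $A \cup T$ split into perfect matchings (so $H[A] \ni$ an edge and $H[A\cup T]$ has a perfect matching of two edges). I would first show that every $k$-set $T$ has many absorbing sets. Fix $T = \{t_1,\dots,t_k\}$; since $H$ has no isolated vertex, each $t_i$ lies in an edge, but to build an absorber one needs a bit more. The natural move is: since $\deg(u)+\deg(v) > (1+2\varepsilon)\binom{n}{k-1}$ whenever $u,v$ are adjacent, for any edge $e$ and any $u \in e$ we have $\deg(u) > \varepsilon \binom{n}{k-1}$ (otherwise its partner $v\in e$ would need degree exceeding $\binom{n}{k-1}$). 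Thus \emph{every} vertex lying in some edge — i.e.\ every vertex, as there are no isolated vertices — has degree $> \varepsilon\binom{n}{k-1}$. This large-minimum-degree consequence is what makes the usual counting go through: for each $t_i$ pick an edge $e_i \ni t_i$, and then greedily extend, using the degree bound to choose the remaining vertices so that the $k$ edges $e_1,\dots,e_k$ together with a perfect matching on their union's complement inside a fixed $2k$-set are all present; a short counting shows the number of $T$-absorbing $k$-sets is at least $c\, n^{k}$ for some $c = c(\varepsilon,k) > 0$, say $c \ge \varepsilon^{3k}$ after being generous with constants.

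Next I would run the probabilistic selection. Choose a random family $\mathcal{F}$ by including each $k$-set of $V(H)$ independently with probability $p = \varepsilon^{6k+1} n^{1-k}$ (constants chosen so the final matching is small enough). Standard first-moment and Chernoff estimates give, with positive probability: (i) $|\mathcal{F}| \le 2p\binom{n}{k} \le \varepsilon^{5k} n$; (ii) the number of intersecting pairs in $\mathcal{F}$ is at most, say, $\tfrac12 \varepsilon^{8k+2} n$; and (iii) for every $k$-set $T$, the number of $T$-absorbing sets inside $\mathcal{F}$ is at least $\tfrac12 p\, c\, n^k \ge \varepsilon^{8k+2} n$ (here it is crucial that $pcn^k$ is much larger than the bound in (ii), which holds because $c$ is a constant in $n$). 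Delete from $\mathcal{F}$ one $k$-set from each intersecting pair and every $k$-set that is not an edge of $H$; the remaining sets form a matching $M$ in $H$ with $|M| \le |\mathcal{F}| \le \varepsilon^{4k+1} n$, and for every $k$-set $T$ at least $\varepsilon^{8k+2} n - \varepsilon^{8k+2}n/2 > 0$ of its absorbing sets survive in $M$ — in fact at least $\tfrac12 \varepsilon^{8k+2} n$ of them do.

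Finally, the absorption step: given $V' \subseteq V(H)$ with $|V'| \le \varepsilon^{8k+2} n$ and $k \mid |V'|$, write $V' = \{T^{(1)},\dots,T^{(r)}\}$ as a union of $r = |V'|/k \le \varepsilon^{8k+2}n/k$ disjoint $k$-sets and absorb them one at a time: having absorbed $T^{(1)},\dots,T^{(j-1)}$ using $j-1$ distinct absorbing sets from $M$, there remain at least $\tfrac12\varepsilon^{8k+2}n - (j-1) > 0$ absorbing sets in $M$ for $T^{(j)}$, so pick an unused one $A$ and replace its edge in the current matching by the two-edge perfect matching of $A \cup T^{(j)}$. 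After $r$ steps we obtain a matching $M'$ with $V(M') = V(M) \cup V'$, as required. The main obstacle — and the place where the Ore-type hypothesis rather than a minimum-degree hypothesis has to be handled with care — is the first step: verifying that the degree-sum condition together with ``no isolated vertex'' really does force a positive-density supply of absorbing $k$-sets for \emph{every} $k$-set $T$, including $k$-sets $T$ that themselves contain low-degree vertices; the observation that every vertex has degree $>\varepsilon\binom{n}{k-1}$ is what resolves this, and everything downstream is routine probabilistic bookkeeping.
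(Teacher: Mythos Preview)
Your proposal has a genuine gap in the first step. The absorber you define is a single edge $A$ such that $H[A\cup T]$ has a two-edge perfect matching, but this cannot work for every $k$-set $T$. Set $W'=\{w:\deg(w)\le(\tfrac12+\varepsilon)\binom{n}{k-1}\}$. The hypothesis forces every edge of $H$ to contain at most one vertex of $W'$, yet nothing prevents $|W'|$ from being of order $n$ (for instance $|W'|\approx n/2$ is consistent with all the assumptions). If $T\subseteq W'$, then any two-edge matching on a $2k$-set containing $T$ covers at most two vertices of $W'$; hence for $k\ge 3$ there is \emph{no} $T$-absorbing $k$-set whatsoever, and your counting claim fails outright. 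Your construction sketch (``pick edges $e_i\ni t_i$ and greedily extend \dots\ inside a fixed $2k$-set'') does not match the stated absorber size and does not explain why the absorber alone spans a perfect matching; the observation that every vertex has degree $>\varepsilon\binom{n}{k-1}$ is not by itself enough to rescue this.

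The paper repairs exactly this point by using absorbers of size $2k^2$ and by exploiting the degree-sum condition directly, not only its minimum-degree consequence. Given $A=\{u_1,\dots,u_k\}$, one first picks for each $u_i$ a high-degree neighbour $u_{k+i}\in U'=V\setminus W'$, then a full edge $\{u_{2k+1},\dots,u_{3k}\}$ inside $U'$, and finally, for each $i\le 2k$, a $(k-1)$-set $B_i$ with both $B_i\cup\{u_i\}$ and $B_i\cup\{u_{k+i}\}$ edges of $H$. This last step is where the Ore condition is essential: since $u_i$ and $u_{k+i}$ are adjacent by construction, $\deg(u_i)+\deg(u_{k+i})>(1+2\varepsilon)\binom{n}{k-1}$, and inclusion--exclusion then yields at least $\varepsilon\binom{n}{k-1}$ common $(k-1)$-sets $B_i$. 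The resulting $2k^2$-set has one perfect matching using the edges $B_i\cup\{u_{k+i}\}$ and another, after adding $A$, using $B_i\cup\{u_i\}$ together with $\{u_{2k+1},\dots,u_{3k}\}$. Once this richer gadget is in place, the probabilistic selection and greedy absorption you describe go through essentially as you wrote them.
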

By Lemma \ref{absorbinglemma1}, we can let $M_1$ be a matching of size $\leq \varepsilon^{13} n$ such that for any vertex set $V' \in V(H)$ of size at most $\varepsilon^{26}n$  and $ |V'| \in 3\mathbb{Z}$, $H[V(M_1) \cup V']$ spans a perfect matching. Let $H'=H-V(M_1)$. We divide all the vertices of $V(H')$ into the following four groups:
$A_1=\{ u\in V(H'): \deg_{H'}(u) \leq \big(\frac{4}{15}+\gamma \big)n^2 \}$;  $A_2=\{ u\in V(H'): \big(\frac{4}{15}+\gamma \big)n^2 < \deg_{H'}(u) \leq \big(\frac{3}{10}+\frac{1}{2}\gamma- \frac{3}{2}\varepsilon^{13}\big) n^2\}$; $A_3=\{ u\in V(H'): \big(\frac{3}{10}+\frac{1}{2}\gamma- \frac{3}{2}\varepsilon^{13}\big)n^2 < \deg_{H'}(u) \leq \frac{1}{3}n^2-3\varepsilon^{13} n^2 \}$; $A_4=\{ u\in V(H'): \deg_{H'}(u) > \frac{1}{3}n^2-3\varepsilon^{13} n^2\}$. First the vertex degree sum condition can ensure that we find a matching covering all the  vertices of $A_1$. On this basis, through structural analysis, we can find a matching $M_2$ covering all vertices except at most $C$ vertices, where $C$ is a sufficiently large constant. Finally the matching $M_1$ absorbs the leftover vertices  to obtain a matching $M_3$, thus $ M_2 \cup M_3$ is a perfect matching.

\medskip
We need the following lemmas in our proof. Lemma \ref{lemma1} is Observation 1.8 of Aharoni and Howard \cite{Aha}. Lemma \ref{Lemma33} is the Chernoff bounds, which can be found in \cite{Mitzenmacher}.
A $k$-graph $H$ is called \emph{$k$-partite} if $V(H)$ can be partitioned into $V_1,\ldots,V_k$, such that each edge of $H$ meets every $V_i$ in precisely one vertex. If all parts are of the same size  $n$, we call $H$ \emph{$n$-balanced}.

\begin{lemma}\cite{Aha}
\label{lemma1}
Let $F$ be the edge set of an $n$-balanced $k$-partite $k$-graph. If $F$ does not contain $s$ disjoint edges, then $|F| \leq (s-1)n^{k-1}$.
\end{lemma}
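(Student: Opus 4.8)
The plan is to prove the stated bound by a one-shot averaging over random ``generalized diagonals'', an argument that sidesteps the factor-$k$ loss inherent in counting against a maximum matching. Fix an identification of each part $V_j$ with $[n]$, let $\pi_1,\dots,\pi_k$ be independent uniformly random permutations of $[n]$, and set
\[
D \;=\; D(\pi_1,\dots,\pi_k) \;=\; \{\, (\pi_1(t),\pi_2(t),\dots,\pi_k(t)) : t\in[n] \,\},
\]
a random family of $n$ $k$-tuples, one for each ``slot'' $t\in[n]$, the tuple for slot $t$ meeting $V_j$ in $\pi_j(t)$.

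The first step is the structural observation that $D$ is \emph{always} a set of $n$ pairwise disjoint edges of the complete $n$-balanced $k$-partite $k$-graph: for distinct slots $t\neq t'$ the two tuples differ in \emph{every} coordinate $j$, since $\pi_j$ is injective, so they are pairwise vertex-disjoint (in particular distinct, so $|D|=n$). Hence $F\cap D$ is a matching in $F$; since $F$ contains no $s$ pairwise disjoint edges, $|F\cap D|\le s-1$ for every outcome of $(\pi_1,\dots,\pi_k)$.

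The second step computes $\mathbb{E}[\,|F\cap D|\,]$ by linearity of expectation. For a fixed edge $e=(e_1,\dots,e_k)\in F$, the events ``$e$ occupies slot $t$'' (over $t\in[n]$) are pairwise disjoint, and by independence of the $\pi_j$ together with $\Pr[\pi_j(t)=e_j]=1/n$,
\[
\Pr[\,e\in D\,] \;=\; \sum_{t=1}^{n}\ \prod_{j=1}^{k}\Pr[\pi_j(t)=e_j] \;=\; \sum_{t=1}^{n} n^{-k} \;=\; n^{-(k-1)} .
\]
Summing over $e\in F$ gives $\mathbb{E}[\,|F\cap D|\,]=|F|\,n^{-(k-1)}$. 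Comparing with the deterministic bound $|F\cap D|\le s-1$ yields $|F|\,n^{-(k-1)}\le s-1$, i.e.\ $|F|\le (s-1)n^{k-1}$, which is the assertion; and equality is attained by the $k$-graph consisting of all $k$-tuples meeting a fixed $(s-1)$-element subset of $V_1$, so the bound is best possible.

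I do not expect a genuine obstacle here: the only real content is choosing the right object to average over, namely the ``rainbow patterns'' $D$ rather than single vertices or edges. (Averaging $\deg(v)$ over the vertices of a maximum matching, or counting edges meeting a random edge, only delivers the weaker bound $k(s-1)n^{k-1}$, since one vertex can lie in up to $n^{k-1}$ edges and a maximum matching has up to $k(s-1)$ vertices.) The two points that merit a line of justification are the pairwise disjointness of $D$ (immediate from injectivity of each $\pi_j$) and the probability identity (disjointness of the slot-events for a fixed $e$, plus independence of the $k$ coordinates); degenerate parameter ranges such as $s>n$ need no separate treatment, as both $|F\cap D|\le s-1$ and the expectation identity hold verbatim.
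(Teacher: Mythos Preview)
Your argument is correct. The paper does not supply its own proof of this lemma; it is simply quoted as Observation~1.8 of Aharoni and Howard~\cite{Aha} and used as a black box. There is therefore nothing in the paper to compare your approach against.

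For what it is worth, your averaging over random ``generalized diagonals'' $D(\pi_1,\dots,\pi_k)$ is exactly the standard short proof of this fact: a uniformly random perfect matching of the complete $n$-balanced $k$-partite $k$-graph hits each fixed $k$-tuple with probability $n^{-(k-1)}$, and since $F\cap D$ is always a matching of size at most $s-1$, linearity of expectation gives $|F|\le (s-1)n^{k-1}$. The two justifications you single out (pairwise disjointness of $D$ from injectivity of each $\pi_j$, and disjointness of the slot-events for a fixed $e$) are indeed the only points needing a word, and you handle them correctly. The tightness example is also right.
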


\begin{lemma}\label{Lemma33}\cite{Mitzenmacher} Suppose $X_1,\ldots,X_n$ are independent random variables taking values in $\{0,1\}$. Let $X$ denote their sum and $\mu=\mathbb{E}[X]$ denote the expected value of $X$. Then for any $0 < \delta < 1$,
\begin{align*}
 \mathbb{P}[ X \geq (1+\delta)\mu]< e^{-\delta^2\mu/3} \ \ \ \text{and}  \ \ \
\mathbb{P} [ X \leq (1-\delta)\mu]< e^{-\delta^2\mu/2},
\end{align*}
 and for any $ \delta \geq 1$, $$ \mathbb{P} [ X \geq (1+\delta)\mu]< e^{-\delta \mu/3}.$$
\end{lemma}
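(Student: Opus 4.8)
\textbf{Proof proposal for Lemma \ref{Lemma33}.}
The plan is to use the standard exponential moment (Bernstein--Chernoff) method. For the upper tail, fix a parameter $t>0$ and apply Markov's inequality to the nonnegative random variable $e^{tX}$: since $X\ge(1+\delta)\mu$ is equivalent to $e^{tX}\ge e^{t(1+\delta)\mu}$, we obtain
$$\mathbb{P}[X\ge(1+\delta)\mu]\le e^{-t(1+\delta)\mu}\,\mathbb{E}[e^{tX}].$$
Writing $p_i=\mathbb{P}[X_i=1]$ and using independence of the $X_i$ together with the elementary inequality $1+x\le e^{x}$, one bounds
$$\mathbb{E}[e^{tX}]=\prod_{i=1}^{n}\big(1+p_i(e^{t}-1)\big)\le \prod_{i=1}^{n} e^{p_i(e^{t}-1)}=e^{\mu(e^{t}-1)},$$
where $\mu=\sum_i p_i$. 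Hence $\mathbb{P}[X\ge(1+\delta)\mu]\le \exp\big(\mu(e^{t}-1)-t(1+\delta)\mu\big)$ for every $t>0$, and taking $t=\ln(1+\delta)$, which minimizes the exponent, yields the master bound
$$\mathbb{P}[X\ge(1+\delta)\mu]\le\Big(\frac{e^{\delta}}{(1+\delta)^{1+\delta}}\Big)^{\mu}.$$
For the lower tail the same argument applied to $e^{-tX}$ with $t=-\ln(1-\delta)>0$ (valid for $0<\delta<1$) gives
$$\mathbb{P}[X\le(1-\delta)\mu]\le\Big(\frac{e^{-\delta}}{(1-\delta)^{1-\delta}}\Big)^{\mu}.$$

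It then remains to deduce the three displayed estimates from these two master inequalities by elementary analysis of one-variable functions. For the first upper-tail bound one shows $\varphi(\delta):=\delta-(1+\delta)\ln(1+\delta)\le-\delta^{2}/3$ on $(0,1)$; this follows from the comparison $(1+\delta)\ln(1+\delta)\ge\delta+\delta^{2}/2-\delta^{3}/6$ (itself proved by checking that $h(\delta):=(1+\delta)\ln(1+\delta)-\delta-\delta^{2}/2+\delta^{3}/6$ satisfies $h(0)=h'(0)=0$ and $h''(\delta)=\delta^{2}/(1+\delta)\ge0$), since $-\delta^{2}/2+\delta^{3}/6\le-\delta^{2}/3$ precisely when $\delta\le1$. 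For the lower-tail bound one shows $-\delta-(1-\delta)\ln(1-\delta)\le-\delta^{2}/2$ on $(0,1)$; here it is cleanest to set $f(\delta)=(1-\delta)\ln(1-\delta)+\delta-\delta^{2}/2$, compute $f'(\delta)=-\ln(1-\delta)-\delta\ge0$, and conclude $f\ge f(0)=0$. For the $\delta\ge1$ bound one needs $(1+\delta)\ln(1+\delta)\ge\tfrac{4}{3}\delta$; setting $g(\delta)=(1+\delta)\ln(1+\delta)-\tfrac{4}{3}\delta$ one has $g'(\delta)=\ln(1+\delta)-\tfrac13>0$ for $\delta\ge1$ and $g(1)=2\ln2-\tfrac43>0$, so $g\ge0$ on $[1,\infty)$. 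Raising each of these three inequalities to the power $\mu$ converts them into the claimed probability bounds.

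The routine part is the exponential-moment computation, which is entirely standard; the only place requiring any care is the final batch of one-variable inequalities, and in particular the regime $\delta\ge1$, where the Taylor-type comparison used for $\delta<1$ no longer suffices and one instead argues via monotonicity of $g$. None of these steps interacts with the hypergraph setting: the lemma is invoked only to supply concentration of sums of indicator variables in the probabilistic argument of Section 4.
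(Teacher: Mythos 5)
The paper does not prove this lemma at all --- it is quoted verbatim from the cited reference [Mitzenmacher] as a black box --- and your argument is the standard exponential-moment proof given there, with all three one-variable inequalities verified correctly. The only (pedantic) caveat is that your derivation yields non-strict bounds, and the strict inequalities as stated genuinely fail in the degenerate case $\mu=0$; for $\mu>0$ and $\delta>0$ strictness follows since $1+x<e^{x}$ for $x\neq 0$.
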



The following two lemmas are Lemmas 6 and 7  in \cite{zhang}. Both of them give an upper bound of degree sum of three vertices in three graphs with some specific intersecting structure.
\begin{lemma}\cite{zhang}\label{lemmaa2}
Let $G_1, G_2, G_3$ be three graphs on the same set $V$ of $n\ge 4$ vertices such that every edge of $G_1$ intersects every edge of $G_i$ for both $i=2, 3$. Then $\sum_{i=1}^3\sum_{v\in A} \deg_{G_i}(v) \leq 6(n-1)$ for any set $A\subset V$ of size $3$.
\end{lemma}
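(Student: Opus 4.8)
Fix a $3$-set $A=\{a,b,c\}\subseteq V$ and put $S:=\sum_{i=1}^{3}\sum_{v\in A}\deg_{G_i}(v)$; the goal is to show $S\le 6(n-1)$ for every such $A$. The starting point is the identity $\sum_{v\in A}\deg_{G_i}(v)=\sum_{e\in E(G_i)}|e\cap A|$, which in particular yields the trivial estimate $\sum_{v\in A}\deg_{G_i}(v)\le 3(n-1)$. Since $S=6(n-1)$ is already attained by $G_1=\emptyset$, $G_2=G_3=K_n$, there is no slack to spare, so whenever $G_1$ is ``small'' the proof must exploit the structure forced by the cross-intersecting condition rather than the trivial bound.

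The argument splits according to whether $E(G_1)$ is an intersecting family. First suppose $G_1$ contains two disjoint edges $e,f$. Then every edge of $G_2$ and of $G_3$ meets both $e$ and $f$ (as $e,f\in E(G_1)$), hence is one of the four ``crossing'' edges joining an endpoint of $e$ to an endpoint of $f$; these span only the four vertices of $e\cup f$. If, in addition, $G_2$ or $G_3$ contains two disjoint edges $e',f'$, then symmetrically every edge of $G_1$ is a crossing edge of $e',f'$, so now each of $G_1,G_2,G_3$ is a graph on at most four vertices and each contributes at most $6$ to $S$, giving $S\le 18\le 6(n-1)$. Otherwise $E(G_2)$ and $E(G_3)$ are intersecting subfamilies of the four crossing edges of $e,f$, which form a $4$-cycle; an intersecting subfamily of the edges of a $4$-cycle has at most two edges, so $\sum_{v\in A}\deg_{G_j}(v)\le 4$ for $j=2,3$ and $S\le 3(n-1)+8\le 6(n-1)$ for $n\ge 4$.

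It remains to treat the case where $E(G_1)$ is intersecting, that is, empty, a single edge, a star with some centre $z$, or a triangle. If $G_1=\emptyset$ we are done. If $G_1$ is a triangle on $\{x,y,z\}$, then every edge of $G_2$ (and of $G_3$) meets all three sides, hence lies inside $\{x,y,z\}$, so $G_2,G_3$ are subgraphs of the same triangle and $S\le 18$. If $G_1$ is a star at $z$ with at least three edges, then every edge of $G_2$ (and of $G_3$) must contain $z$, so $G_1,G_2,G_3$ are all sub-stars at $z$, every vertex $\ne z$ has degree at most $1$ in each $G_i$, and hence $\sum_{v\in A}\deg_{G_i}(v)\le (n-1)+2$, giving $S\le 3n+3\le 6(n-1)$. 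The genuinely tight case is when $E(G_1)$ is a single edge $\{x,y\}$ (a two-edge star behaving the same way up to an $O(1)$ correction): then every edge of $G_2$ and of $G_3$ meets $\{x,y\}$, so each vertex outside $\{x,y\}$ has degree at most $2$ in both; using $\sum_{v\in A}\deg_{G_j}(v)\le (n-1)|A\cap\{x,y\}|+2|A\setminus\{x,y\}|$ for $j=2,3$ together with $|E(G_1)|\le 1$ gives $S\le 4n+2\le 6(n-1)$ for $n\ge 4$.

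The main obstacle is exactly this last family of cases. Since the inequality is tight in general, once $G_1$ is nonempty one may not bound $\sum_{v\in A}\deg_{G_1}(v)$ crudely by $3(n-1)$, and must instead pin down precisely how a small $G_1$ confines $G_2$ and $G_3$ (to edges through a fixed vertex, or to edges meeting a fixed pair) while carefully tracking how many vertices of $A$ lie in that fixed set. One also has to check that every inequality above survives at $n=4$; it does, but with no margin.
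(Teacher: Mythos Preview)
The paper does not prove this lemma; it is quoted verbatim from \cite{zhang} (Zhang, Zhao and Lu, 2018) and used as a black box, so there is no ``paper's own proof'' to compare against.

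Your argument is essentially correct. The case split (whether $G_1$ contains two disjoint edges, and if not, whether $G_1$ is empty, a single edge, a star, or a triangle) is exhaustive, and each bound you derive does hold for $n\ge 4$. One point deserves a full line rather than a parenthetical: the two-edge star $G_1=\{zx,zy\}$ is not merely the single-edge case ``up to an $O(1)$ correction''. Here every edge of $G_2,G_3$ either contains $z$ or equals $xy$, so with $A=\{z,x,y\}$ one gets $\sum_{v\in A}\deg_{G_j}(v)\le (n-1)+2+2=n+3$ and $\sum_{v\in A}\deg_{G_1}(v)=4$, hence $S\le 2n+10$; this is a genuinely different bound from your $4n+2$, and it is also tight at $n=4$ (both equal $18=6(n-1)$). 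So the case should be written out explicitly, but once you do, the proof goes through.
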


\begin{lemma}\cite{zhang}\label{lemma3}
Let $G_1, G_2, G_3$ be three graphs on the same set $V$ of $n\ge 5$ vertices such that for any $i\ne j$, every edge of $G_i$ intersects every edge of $G_j$.
Then $\sum_{i=1}^3\sum_{v\in A} \deg_{G_i}(v) \leq 3(n+1)$ for any set $A\subset V$ of size $3$.
\end{lemma}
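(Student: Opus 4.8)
Fix $A=\{a_1,a_2,a_3\}$ and set $d_i:=\sum_{v\in A}\deg_{G_i}(v)=\sum_{e\in E(G_i)}|e\cap A|$; since at most $3$ edges of $G_i$ lie inside $A$ and at most $3(n-3)$ meet $A$ in exactly one vertex, $d_i\le 3n-3$ always. I would establish the bound by a case analysis on the matching numbers $\nu(G_i)$ (the maximum number of pairwise disjoint edges of $G_i$), the guiding idea being that the cross-intersecting hypothesis forces the other two graphs to collapse to $O(1)$ edges whenever one $G_i$ is large, and that $3(n+1)$ is attained only when $G_1=G_2=G_3$ equals the full star centred at a vertex of $A$.

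If some $\nu(G_i)\ge 3$, a $2$-element edge of $G_j$ ($j\ne i$) cannot meet three pairwise disjoint edges of $G_i$, so $G_j=\emptyset$ and $\sum_k d_k=d_i\le 3n-3<3(n+1)$. Next suppose $\max_i\nu(G_i)=2$, say $G_1$ contains disjoint edges $xy$ and $x'y'$. Every edge of $G_2$ and of $G_3$ meets both, hence $E(G_2),E(G_3)\subseteq\{xx',xy',yx',yy'\}$, the edge set of the complete bipartite graph $K_{2,2}$ with parts $\{x,y\}$ and $\{x',y'\}$. If one of $G_2,G_3$ contains a perfect matching of this $K_{2,2}$, a symmetric argument places all three graphs on the four vertices $\{x,y,x',y'\}$ as subgraphs of $4$-cycles, so every vertex has degree at most $2$ in each $G_k$; since at most three of these four vertices lie in $A$, each $d_k\le 6$ and $\sum_k d_k\le 18\le 3(n+1)$ because $n\ge 5$. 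Otherwise $G_2$ and $G_3$ are stars inside $K_{2,2}$, each with at most $2$ edges, so $d_2,d_3\le 4$, and any edge of $G_2$ meeting all edges of $G_1$ confines $G_1$ to a union of two stars, whence $d_1\le 2n$ and $\sum_k d_k\le 2n+8\le 3(n+1)$ as $n\ge 5$. (An empty $G_2$ or $G_3$ only makes the bound easier.)

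Finally suppose every $G_i$ is intersecting, so that each $G_i$ is a star or a triangle. If some $G_i$ is a triangle on $\{p,q,r\}$, every edge of each $G_j$ ($j\ne i$) must be one of its three sides, so $G_1,G_2,G_3$ all lie inside one triangle and $\sum_k d_k\le 18$. Otherwise $E(G_i)\subseteq\mathrm{star}(w_i)$ for each $i$. If $w_1=w_2=w_3=:w$, then either $w\notin A$, giving each $d_k\le 3$, or $w\in A$, in which case each $G_k$ has at most two edges $wa_j$ contributing $2$ to $d_k$ and every other edge contributes $1$, so $d_k\le(n-1)+2=n+1$ and $\sum_k d_k\le 3(n+1)$ --- the extremal case. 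If the $w_i$ are not all equal, say $w_1\ne w_2$, then as soon as $G_1$ has at least $3$ edges every edge of $G_2$ must contain $w_1$, forcing $G_2\subseteq\{w_1w_2\}$, so $d_2\le 2$; splitting on whether $w_3=w_1$ one checks in each case that $\sum_k d_k\le 2(n+1)+O(1)\le 3(n+1)$.

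The only genuinely tedious part --- and the main obstacle --- is the second paragraph together with the misaligned-stars branch: in each sub-configuration one must track how the distinguished vertices $x,y,x',y'$ (respectively $w_1,w_2,w_3$) sit relative to $A$ and verify that the resulting constant never exceeds $3(n+1)$. This is also why $n\ge 5$ is the correct hypothesis, since the $K_{2,2}$- and triangle-type sub-cases produce the value $18$, which equals $3(n+1)$ precisely when $n=5$.
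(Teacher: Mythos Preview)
The paper does not actually prove this lemma; it is quoted verbatim from \cite{zhang} (Zhang, Zhao and Lu, \emph{Electron.\ J.\ Combin.}\ 25 (2018), P3.45), so there is no in-paper argument to compare against. Your proposal therefore has to be judged on its own merits.

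Your strategy --- stratifying by the matching numbers $\nu(G_i)$ and observing that a large matching in one $G_i$ collapses the others --- is sound and standard, and the extremal configuration you identify (three identical stars centred at a vertex of $A$) is indeed the tight case. Each of your main branches checks out: the $\nu\ge 3$ case is immediate; in the $\max\nu=2$ case the $K_{2,2}$ containment for $G_2,G_3$ is correct, the perfect-matching sub-branch confines all three graphs to the four vertices $\{x,y,x',y'\}$ (degree $\le 2$ each), and in the star sub-branch the bound $d_1\le 2n$ for a graph contained in the union of two stars is easily verified by casing on how many of the two centres lie in $A$; finally, in the all-intersecting case the triangle and common-centre star sub-branches are clean.

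The only place the write-up is genuinely sketchy is the ``misaligned stars'' paragraph. Your reduction ``if $G_1$ has at least $3$ edges then $G_2\subseteq\{w_1w_2\}$'' is correct (three edges at $w_1$ force at least two of them to miss $w_2$, pinning any edge of $G_2$ to $w_1w_2$), but the sentence ``splitting on whether $w_3=w_1$ one checks \dots'' hides a further sub-split: you must also dispose of the possibility that \emph{no} $G_i$ has three edges (then every $d_k\le 4$ and the sum is at most $12$), and when $w_3\ne w_1$ you should observe that the same three-edge argument applied to the pair $(G_1,G_3)$ forces $G_3\subseteq\{w_1w_3\}$. None of this is difficult, and you flag it yourself as the ``tedious'' part, but in a final write-up these two lines need to be made explicit to close the argument.
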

The following lemma is Lemma 16 in \cite{Yilu2023}.
\begin{lemma}\label{Lemma4}\cite{Yilu2023}
Given two disjoint vertex sets $A=\{u_1,u_2,\ldots,u_a\}$ and $B=\{v_1,v_2,\ldots,v_b\}$ with $ a \geq 2$ and $ b\geq 1$. Let $G_i$ be a graph on $A\cup B$ ($i=1,2,3$). Suppose that every vertex of $B$ is isolated vertex in $G_1$ and every edge of $G_i$ contains at least one vertex of $A$ for both $i=2,3$. If every edge of $G_1$ intersects every edge of $G_j$  containing one vertex from $B$ for both $j=2,3$ and every edge of $G_2$ containing one vertex from $B$ intersects every edge of $G_3$  containing one vertex from $B$, then
 \[\sum_{i=1}^3\left(\sum_{j=1}^2 \deg_{G_i}(u_j)+ \deg_{G_i}(v_1)\right) \leq \max\{6a+2,5a+2b+2\}.\]
\end{lemma}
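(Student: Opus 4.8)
The plan is to use the three cross-intersection hypotheses to force $G_1$ and the bipartite parts of $G_2,G_3$ into a small number of rigid shapes (stars, single edges, or the empty graph), and then to bound the nine quantities $\deg_{G_i}(u_1),\deg_{G_i}(u_2),\deg_{G_i}(v_1)$ by a case analysis that keeps track of whether the structurally distinguished vertices lie in the triple $\{u_1,u_2,v_1\}$. As immediate reductions, $\deg_{G_1}(v_1)=0$ since $v_1\in B$ is isolated in $G_1$, and, since every edge of $G_2$ and $G_3$ meets $A$, each of $G_2,G_3$ splits as $G_i=G_i^A\cup G_i^B$ with $G_i^A\subseteq\binom{A}{2}$ and $G_i^B$ the set of edges having exactly one endpoint in $A$ and one in $B$. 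The trivial bounds $\deg_{G_1}(u_j)\le a-1$, $\deg_{G_i}(u_j)\le a-1+b$ and $\deg_{G_i}(v_1)\le a$ already give a crude total of $8a+4b-6$, so the content of the lemma is to recover the missing slack from the intersection conditions.

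The two structural facts I would establish first are: (i) if $G_2^B\ne\emptyset$ then every edge of $G_1$ must contain the $A$-endpoint of each edge of $G_2^B$ (no $G_1$-edge meets $B$, so it can only meet a $G_2^B$-edge through the latter's $A$-endpoint); hence if $G_2^B$ has all its $A$-endpoints at one vertex $a$ then $G_1$ is a star centered at $a$, and otherwise $G_1$ has at most one edge — and likewise with $G_3^B$ in place of $G_2^B$; (ii) regarding $G_2^B$ and $G_3^B$ as cross-intersecting bipartite families between $A$ and $B$, either there is a single vertex $z\in A\cup B$ lying in every edge of $G_2^B\cup G_3^B$, or one of $G_2^B,G_3^B$ has at most two edges — this is the standard dichotomy, since a bipartite family with at least two edges and no common vertex contains two disjoint edges, and any edge meeting two disjoint edges has only two possibilities.

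The analysis then splits on which of $G_2^B,G_3^B$ are empty. If both are empty, all three $G_i$ live on $A$, $\deg_{G_i}(v_1)=0$, and the sum is at most $6(a-1)<6a+2$. If exactly one is empty, say $G_3^B=\emptyset$, then $G_3$ contributes at most $2(a-1)$, fact (i) makes $G_1$ a star or a single edge so that $\deg_{G_1}(u_1)+\deg_{G_1}(u_2)\le a$ or $\le 2$ according to whether the star center lies in $\{u_1,u_2\}$, and the $G_2$-contribution is controlled using $N_{G_2}(v_1)\subseteq A$ together with that star structure. If both $G_2^B$ and $G_3^B$ are nonempty, $G_1$ is constrained by both (a star at a common $A$-endpoint, a single edge, or empty) and fact (ii) pins down $G_2^B,G_3^B$; one then sums the three contributions while distinguishing whether the common vertex $z$, or a star center, coincides with $v_1$ or with one of $u_1,u_2$. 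The two terms of the maximum reflect the two competing extremal regimes: $6a+2$ governs the configurations in which the heavy vertex lies in $A$, whereas $5a+2b+2$ is forced when $v_1$ is the central vertex, so that $v_1$ can be joined to almost all of $A$ in both $G_2$ and $G_3$ while $u_1,u_2$ still collect their toward-$B$ degrees.

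I expect the last case to be the main obstacle: three cross-intersection constraints must be reconciled simultaneously, and the bound only closes after a further split on how $u_1,u_2,v_1$ meet the distinguished vertices, since the same vertex is forced to play the center role across several of the $G_i$, so the favorable configurations for the different graphs cannot all occur at once. A clean way to organize the bookkeeping is to bound $\deg_{G_i}(u_1)+\deg_{G_i}(u_2)+\deg_{G_i}(v_1)$ separately for each $i$ in terms of $a$, $b$ and an indicator of which special vertex (if any) falls in $\{u_1,u_2,v_1\}$, and then to observe that these indicators are coupled, so that no configuration can exceed both $6a+2$ and $5a+2b+2$.
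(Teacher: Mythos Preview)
The paper does not give its own proof of this lemma: it is quoted as Lemma~16 of \cite{Yilu2023}. What the paper does prove is the companion Lemma~\ref{Lemma4444444444} (identical hypotheses, with $2\deg_{G_i}(v_1)$ in place of $\deg_{G_i}(v_1)$ and bound $\max\{8a+2,6a+2b+4\}$), and that argument is the natural comparison.

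Your plan and the paper's proof of the variant are the same argument organised along different axes. The paper first strips off the unconstrained piece $\sum_{i=2}^3\sum_{j=1}^2|N_{G_i}(u_j)\cap A|\le 4(a-1)$, reducing to a residual quantity $z$ built from $\deg_{G_1}(u_j)$, $|N_{G_i}(v_1)\cap A|$ and $|N_{G_i}(u_j)\cap B|$; it then runs a case split on $\ell_1=|\{j\in\{1,2\}:\deg_{G_1}(u_j)\ge 2\}|$ and, when $\ell_1=0$, on the analogous parameters $\ell_2,\ell_3$ for $G_2,G_3$. Your structural facts are exactly what those cases encode: $\ell_1\ge 1$ is the contrapositive of your fact~(i) (a non-star $G_1$ kills the cross parts $G_j^B$), and the $\ell_2,\ell_3$ subcases are your cross-intersecting dichotomy~(ii). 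You branch first on whether $G_2^B,G_3^B$ are empty, the paper branches first on $\ell_1$; either ordering leads to the same handful of configurations and the same arithmetic. Separating the $4(a-1)$ contribution at the outset, as the paper does, lightens the bookkeeping in your ``both nonempty'' case since only a single scalar $z$ remains to be bounded rather than nine individual degrees, but this is cosmetic. Your plan is sound and would carry through with no more work than the paper's proof of Lemma~\ref{Lemma4444444444}.
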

Similarly,  we prove the following lemma.

\begin{lemma}\label{Lemma4444444444}
Given two disjoint vertex sets $A=\{u_1,u_2,\ldots,u_a\}$ and $B=\{v_1,v_2,\ldots,v_b\}$ with $ a \geq 2$ and $ b\geq 1$. Let $G_i$ be a graph on $A\cup B$ ($i=1,2,3$). Suppose that every vertex of $B$ is isolated vertex in $G_1$ and every edge of $G_i$ contains at least one vertex of $A$ for both $i=2,3$. If every edge of $G_1$ intersects every edge of $G_j$  containing one vertex from $B$ for both $j=2,3$ and every edge of $G_2$ containing one vertex from $B$ intersects every edge of $G_3$  containing one vertex from $B$, then
 \[\sum_{i=1}^3\left(\sum_{j=1}^2 \deg_{G_i}(u_j)+ 2\deg_{G_i}(v_1)\right) \leq \max\{8a+2,6a+2b+4\}.\]
\end{lemma}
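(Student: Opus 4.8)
The plan is to mimic the proof of Lemma \ref{Lemma4} but with the coefficient $2$ attached to $\deg_{G_i}(v_1)$. The quantity to be bounded is
\[
\Sigma := \sum_{i=1}^3\left(\sum_{j=1}^2 \deg_{G_i}(u_j)+ 2\deg_{G_i}(v_1)\right)
= \sum_{i=1}^3\Big(\deg_{G_i}(u_1)+\deg_{G_i}(u_2)+2\deg_{G_i}(v_1)\Big).
\]
First I would split each $\deg_{G_i}(v_1)$ (for $i=2,3$; recall $v_1$ is isolated in $G_1$, so $\deg_{G_1}(v_1)=0$) according to whether the neighbour of $v_1$ lies in $A$ or in $B\setminus\{v_1\}$: write $\deg_{G_i}(v_1)=\deg^A_{G_i}(v_1)+\deg^B_{G_i}(v_1)$. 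Since every edge of $G_i$ ($i=2,3$) meets $A$, an edge at $v_1$ whose other endpoint is in $B$ is impossible unless... no — actually such an edge $\{v_1,v'\}$ with $v'\in B$ would not meet $A$, contradiction; hence $\deg^B_{G_i}(v_1)=0$ and $\deg_{G_i}(v_1)=\deg^A_{G_i}(v_1)\le a$ for $i=2,3$. This already gives the crude bound $2\sum_{i=2}^3\deg_{G_i}(v_1)\le 4a$, which is the "$+4$'' part of the second term in the max; the real work is bounding $\sum_{i=1}^3(\deg_{G_i}(u_1)+\deg_{G_i}(u_2))$ together with the intersection constraints.

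The key step is a case analysis on how many of the edges incident to $v_1$ in $G_2$ and $G_3$ actually exist. If $v_1$ is isolated in both $G_2$ and $G_3$, then the intersection hypotheses involving edges at $v_1$ are vacuous, and I would bound $\sum_{i=1}^3(\deg_{G_i}(u_1)+\deg_{G_i}(u_2))$ directly: every edge of $G_1$ meets every edge of $G_j$ that passes through $v_1$ — vacuous — so we only know $G_2,G_3$ edges meet $A$; a generous count gives $\deg_{G_i}(u_j)\le a+b-1$, but this is too weak, so instead I would use that $u_1,u_2\in A$ and count edges of each $G_i$ meeting $\{u_1,u_2\}$, obtaining roughly $2(a+b)$ per graph, then optimise. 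The more delicate case is when $v_1$ has an edge $e_2=\{v_1,u\}$ in $G_2$ (with $u\in A$) and/or $e_3=\{v_1,u'\}$ in $G_3$: now every edge of $G_1$ must intersect $e_2$ and $e_3$, which forces every $G_1$-edge to contain $u$ or $u'$ (two fixed vertices), so $\deg_{G_1}(u_1)+\deg_{G_1}(u_2)\le$ (number of $G_1$-edges at $u$ or $u'$) is controlled; likewise $e_2$ and $e_3$ intersect each other, forcing $u=u'$ or an edge structure that pins down $G_3$ near $v_1$. Combining these constraints with Lemma \ref{lemmaa2} or Lemma \ref{lemma3} applied to suitable restrictions of $G_1,G_2,G_3$ on the relevant $3$-set should yield $8a+2$ in this branch (the "$8a$'' coming from $2a$ for each of $\deg_{G_2}(v_1),\deg_{G_3}(v_1)$-doubled plus $6a$ from the $u_j$ contributions once the edges are pinned).

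Concretely, the cleanest route is: (i) dispose of $\deg_{G_i}(v_1)$ as above, reducing to bounding $P:=\sum_{i=1}^3(\deg_{G_i}(u_1)+\deg_{G_i}(u_2))$ plus the bookkeeping terms; (ii) when some $G_j$-edge at $v_1$ exists, observe it is a graph edge $\{v_1,w\}$ meeting every $G_1$-edge, so $G_1\subseteq$ the star at $w$ union edges within $\{w\}\cup(\text{small set})$, giving $\deg_{G_1}(u_1)+\deg_{G_1}(u_2)\le a+O(1)$; (iii) handle $G_2,G_3$ symmetrically using the mutual-intersection hypothesis between their $v_1$-edges, which restricts where those edges can point; (iv) add everything and compare with the two candidate maxima. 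The main obstacle I anticipate is step (iii): getting the constant exactly right (the "$+2$'' versus "$+4$'') requires carefully tracking whether $\deg_{G_2}(v_1)$ and $\deg_{G_3}(v_1)$ can simultaneously be as large as $a$ while the $u_j$-degrees are also near their maxima — I expect the intersection constraints force a trade-off, and pinning down that trade-off (perhaps by a short sub-case split on whether the $v_1$-neighbourhoods in $G_2$ and $G_3$ overlap) is where the argument will need the most care. Throughout, the inequality $a\ge 2$, $b\ge 1$ is used to ensure the max is attained and the $O(1)$ additive terms land on the stated $+2$ or $+4$.
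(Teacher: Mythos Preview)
Your proposal is a plan rather than a proof, and several of its steps are left unspecified precisely where the difficulty lies. In particular, the case ``$v_1$ isolated in both $G_2$ and $G_3$'' is dismissed with ``this is too weak, so instead I would \ldots\ then optimise'', and you explicitly flag that ``getting the constant exactly right'' in step~(iii) is the main obstacle you have not resolved. Neither Lemma~\ref{lemmaa2} nor Lemma~\ref{lemma3} is actually applicable here (they concern three graphs on a common vertex set with pairwise intersection of \emph{all} edges, whereas the present hypothesis only constrains edges meeting $B$), so invoking them as a black box will not close the argument.

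The paper's proof uses a different and more efficient decomposition. It first strips off the trivially bounded part
\[
\sum_{i=2}^3\sum_{j=1}^2 |N_{G_i}(u_j)\cap A| \le 4(a-1),
\]
and reduces the problem to bounding
\[
z \;=\; \sum_{j=1}^2 \deg_{G_1}(u_j) \;+\; \sum_{i=2}^3\Bigl(2|N_{G_i}(v_1)\cap A| + \sum_{j=1}^2 |N_{G_i}(u_j)\cap B|\Bigr)
\]
by $\max\{4a+6,\,2a+2b+8\}$. The case analysis is then driven not by whether $v_1$ has an edge, but by the parameters $\ell_i$ counting how many of $u_1,u_2$ (and, for $i=2,3$, also $v_1$) have ``large'' degree into the relevant part: $\ell_1=|\{j:\deg_{G_1}(u_j)\ge 2\}|$ and $\ell_i=|\{j:|N_{G_i}(u_j)\cap B|\ge 2\}\cup\{v_1:|N_{G_i}(v_1)\cap A|\ge 3\}|$ for $i=2,3$. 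The point is that as soon as one of these counts is positive, the intersection hypotheses force the \emph{other} graphs' cross-edges to concentrate at a single vertex, collapsing their contribution to $z$; a short split on $\ell_1\ge 1$ versus $\ell_1=0$ (and then on $\ell_2$) gives the exact constants directly. Your proposed split on the existence of an edge at $v_1$ captures only part of this structure and would need further subcases (on the $B$-degrees of $u_1,u_2$) to recover the sharp bound.
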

\begin{proof} Let
\[y:=\sum_{i=1}^3\left(\sum_{j=1}^2 \deg_{G_i}(u_j)+ 2\deg_{G_i}(v_1)\right) \text{ and} \]
 \[ z: =\sum_{j=1}^2 \text{deg}_{G_1}(u_j) +\sum_{i=2}^3 \left( 2|N_{G_i}(v_1) \cap A| + \sum_{j=1}^2  |N_{G_i}(u_j) \cap B| \right).\]
Then
\[y=z+\sum_{i=2}^3 \left(\sum_{j=1}^2 |N_{G_i}(u_j) \cap A| \right). \]
Trivially,
\[\sum_{i=2}^3 \left(\sum_{j=1}^2 |N_{G_i}(u_j) \cap A| \right)\leq 4(a-1) .\]
We only need to prove that  $z \leq \max \{4a+6,2a+2b+8 \}$.

Let $L_1=\{u\in \{u_1,u_2\}:\deg_{G_1}(u)\ge 2\}$, $L_i=\{u\in \{u_1,u_2\}:|N_{G_i}(u) \cap B|\ge 2\} \cup \{v_1: |N_{G_i}(v_1) \cap A| \geq 3\} $, $i=2,3$  and $\ell_i=|L_i|$ for $1\le i\le 3$. We distinguish the following two cases.

\noindent{\bf Case 1. } $\ell_1 \geq 1$.

If $\ell_1 = 2$, then $N_{G_i}(u_j) \cap B = \emptyset$ and $N_{G_i}(v_1) \cap A = \emptyset$ for $i=2,3$, $j=1,2$; Otherwise we can find two disjoint edges, one from $G_1$ and the other from $G_2$ or $G_3$ containing one vertex from $B$. Therefore, $z \leq \sum_{j=1}^2\deg_{G_1}(u_j) \leq 2(a-1)$.

If $\ell_1 = 1$, say $u_1\in L_1$, then $N_{G_i}(u_2) \cap B= \emptyset$ and $N_{G_i}(v_1) \cap A \subseteq  \{u_1\}$ for $i=2,3$; Otherwise one edge of $G_1$ must be disjoint from one edge in $G_2$ or $G_3$ that contains one vertex from $B$. In this case we have $\deg_{G_1}(u_1)+\deg_{G_1}(u_2)  \leq a-1+1$ and $2|N_{G_i}(v_1)\cap A|+\sum_{j=1}^2|N_{G_i}(u_j)\cap B|  \leq 2+b$ for $i=2,3$. Therefore, $z\leq a+2b+4$.

\noindent{\bf Case 2. } $\ell_1 = 0$.

In this case $\sum_{j=1}^2\deg_{G_1}(u_j) \leq 2$. Without loss of generality, we assume that $\ell_2 \geq \ell_3$. First, if $\ell_2=3$, then we have $\text{deg}_{G_1}(u_j) =0$ and $N_{G_3}(u_j) \cap B = N_{G_3}(v_1) \cap A = \emptyset$ for both $j=1,2$. Consequently,  $z \leq 2a+2b$.
Secondly, we assume that $\ell_2 \leq 1$,  then $\sum_{j=1}^2|N_{G_i}(u_j)\cap B| +2|N_{G_i}(v_1)\cap A| \leq \max\{2a+2,b+5\}$ for $i=2,3$. It follows that $z \leq \max\{4a+6,2a+2b+8\}$ as $a \geq 2$ and $b \geq 1$. Thirdly, we assume that $\ell_2 = 2$. If $|N_{G_2}(v_1)\cap A| \geq 3$ and $|N_{G_2}(u_1)\cap B| \geq 2$ (for $|N_{G_2}(u_2)\cap B| \geq 2$, similarly), then we have that $N_{G_3}(u_1) \cap  B \subseteq \{v_1\}$,   $N_{G_3}(u_2) \cap B= \emptyset$ and $N_{G_3}(v_1) \cap  A =\{u_1\}$; Otherwise one edge from $G_2$ is disjoint from one edge from $G_3$, and both of these two edges contain one vertex from $B$.  Therefore, $\sum_{j=1}^2|N_{G_2}(u_j)\cap B| +2|N_{G_2}(v_1)\cap A| \leq 2a+b+1$ and $\sum_{j=1}^2|N_{G_3}(u_j)\cap B| +2|N_{G_3}(v_1)\cap A| \leq 3$, which implies that $z \leq 2a+b+6$.  If $|N_{G_2}(u_1)\cap B| \geq 2$ and $|N_{G_2}(u_2)\cap B| \geq 2$, then we have that $N_{G_3}(u_j) \cap  B =\emptyset$ for both $j=1,2$ and $N_{G_3}(v_1) \cap  A =\emptyset$.  Therefore, $\sum_{j=1}^2|N_{G_2}(u_j)\cap B| +2|N_{G_2}(v_1)\cap A| \leq 2b+4$, which implies that $z \leq 2b+6$.
\end{proof}

\begin{lemma}\label{Yilu}\cite{Yilu2023}
Let $H=(V_1\cup V_2\cdots V_k, E)$ be an $k$-balanced $k$-partite $k$-graph, where $V_i=\{u_1^i,u_2^i,\cdots,$ $u_{k-1}^i,v^i\}$, $i\in [k]$ and no edge of $E$ contains two vertices $v^i$ and $v^j$ $(i \neq j)$. If $H$ contains no perfect matching, then $|E| \leq 2 (k-1)^k$.
\end{lemma}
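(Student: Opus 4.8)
The plan is to mimic the structure of Lemma \ref{lemma1} but to exploit the extra restriction that no edge may contain two of the ``special'' vertices $v^1,\dots,v^k$. First I would split the edge set $E$ according to which, if any, special vertex an edge uses: for each $i\in[k]$ let $E_i$ be the set of edges whose unique vertex in $V_i$ is $v^i$ (so an edge of $E_i$ meets each $V_j$, $j\ne i$, in one of the non-special vertices $u_1^j,\dots,u_{k-1}^j$), and let $E_0$ be the set of edges using no special vertex at all. By the hypothesis these sets are pairwise disjoint and their union is $E$, so $|E|=|E_0|+\sum_{i=1}^k|E_i|$.

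Next I would bound $|E_0|$. The edges in $E_0$ all live inside the $k$-partite $k$-graph with parts $V_i\setminus\{v^i\}=\{u_1^i,\dots,u_{k-1}^i\}$, i.e.\ an $(k-1)$-balanced $k$-partite $k$-graph. Here is the key observation: $E_0$ cannot contain two disjoint edges. Indeed, if $e,e'\in E_0$ were disjoint, then since each $|V_i\setminus\{v^i\}|=k-1$, the pair $e,e'$ would occupy two of the $k-1$ non-special vertices in each part, leaving exactly $k-3$ non-special vertices plus the special vertex $v^i$ unused in each part; one then needs to extend $\{e,e'\}$ to a perfect matching of $H$. Actually the cleanest route is: take any maximal matching $M_0\subseteq E_0$; if $|M_0|\ge 2$ I will derive a perfect matching of $H$ and contradict the hypothesis, hence $|M_0|\le 1$, and then Lemma \ref{lemma1} applied with $s=2$, $n=k-1$ gives $|E_0|\le (k-1)^{k-1}\le (k-1)^k$. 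To build the perfect matching from two disjoint edges of $E_0$: the $2k$ used non-special vertices are covered; in each part $V_i$ there remain $k-3$ non-special vertices and the vertex $v^i$, a total of $k(k-2)$ uncovered vertices forming a balanced $k$-partite sub-$k$-graph, and one uses the structure (each $V_i$ now has the same number of leftover vertices and the special vertices are spread one per part) together with a greedy/König-type argument to complete it — this is the step I expect to require the most care, since it is exactly where the ``no two special vertices in an edge'' hypothesis and the balancedness must be combined, and it may instead be cleaner to argue directly that $E_0$ has no two disjoint edges by a short counting/extremal argument rather than actually exhibiting the perfect matching.

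Finally I would bound $\sum_{i=1}^k|E_i|$. Fix $i$; the edges of $E_i$ all pass through the single vertex $v^i$, so $E_i$ is in bijection with its link $L_{v^i}$, which is the edge set of an $(k-1)$-balanced $(k-1)$-partite $(k-1)$-graph on parts $V_j\setminus\{v^j\}$, $j\ne i$ (no edge of $E_i$ can use another special vertex). If some $E_i$ contained a matching of size $2$, those two disjoint edges together with $v^i$... — rather, the relevant claim is that $E_i$ cannot contain a perfect matching of the link, and more usefully that across all $i$ one cannot have ``too much'': concretely I would show that if $|E_i|\ge (k-2)^{k-1}$ then the link $L_{v^i}$ contains a perfect matching of $V(H)\setminus\{v^1,\dots,v^k,\ \dots\}$, again producing a perfect matching of $H$ by first matching $v^i$ via such a link edge and then completing — contradiction. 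Thus $|E_i|\le (k-2)^{k-1}\cdot$(const) for each $i$, giving $\sum_i|E_i|\le k(k-2)^{k-1}+O(\cdot)$. Combining, $|E|\le(k-1)^k + $ (lower-order terms), and a careful accounting of the constants — keeping the factor $2$ in reserve from the $s=2$ applications of Lemma \ref{lemma1} and absorbing the $E_i$ contributions, which are of strictly lower order in $k-1$ — yields $|E|\le 2(k-1)^k$. I expect the genuine difficulty to lie entirely in the passage ``two disjoint edges avoiding the special vertices $\Rightarrow$ a perfect matching of $H$'': everything else is bookkeeping on top of Lemma \ref{lemma1}.
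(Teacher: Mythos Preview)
The paper does not give its own proof of this lemma; it is quoted from \cite{Yilu2023} and used only as a black box (for $k=3$), so there is no in-paper argument to compare your outline against.

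On its own merits, your plan rests on a structural misconception. Any perfect matching of $H$ consists of $k$ edges covering the $k$ special vertices $v^1,\dots,v^k$; since no edge contains two of them, each edge of a perfect matching contains \emph{exactly one} $v^i$, and hence no perfect matching uses any edge of $E_0$ at all. Two things follow. First, $|E_0|\le (k-1)^k$ is a triviality --- it is just the cardinality of $\prod_i(V_i\setminus\{v^i\})$ --- and your proposed key step ``two disjoint edges in $E_0$ force a perfect matching of $H$'' is not only unnecessary but false: take $E=E_0$ to be all $(k-1)^k$ non-special edges and $E_1=\cdots=E_k=\emptyset$; then $E_0$ is full of disjoint edges yet $H$ has no perfect matching because no special vertex is covered. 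Second, the entire content of the lemma is therefore the inequality $\sum_{i=1}^k|E_i|\le (k-1)^k$ under the no-perfect-matching hypothesis, and this is a genuinely \emph{coupled} statement: a perfect matching of $H$ is precisely a system of pairwise disjoint edges $e_i\in E_i$, one for each $i$. Your treatment of a single $E_i$ in isolation cannot work --- even $|E_i|=(k-1)^{k-1}$, its maximum possible value, tells you nothing about the other $E_j$ and hence nothing about a perfect matching of $H$; and a ``perfect matching in the link of $v^i$'' would consist of $k-1$ edges all passing through $v^i$, hence pairwise intersecting, not a matching in $H$. The argument must tie all the $E_i$'s together, and that coupling is exactly what is missing from your outline.
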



\begin{lemma}\label{lemma120645778}
Let $H=(V_1\cup V_2\cup V_3, E)$ be a $3$-balanced $3$-partite $3$-graph, where $V_i=\{u_1^i,u_2^i,v^i\}$, $i\in [3]$ and no edge of $E$ contains two vertices $v^i$ and $v^j$ $(i \neq j)$. If $H$ contains no perfect matching, then $2d(v^3)+d(u_1^3)+d(u_2^3) \leq 20$.
\end{lemma}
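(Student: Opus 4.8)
The plan is to reduce the statement to the edge-count bound already available in Lemma~\ref{Yilu} together with one elementary observation about the link of $v^3$. Since $H$ is $3$-partite with parts $V_1,V_2,V_3$, every edge meets $V_3$ in exactly one vertex, so the edge set of $H$ is partitioned according to which vertex of $V_3=\{u_1^3,u_2^3,v^3\}$ an edge contains; hence
\[
d(v^3)+d(u_1^3)+d(u_2^3)=|E(H)|,
\]
and therefore $2d(v^3)+d(u_1^3)+d(u_2^3)=|E(H)|+d(v^3)$. It thus suffices to prove $|E(H)|\le 16$ and $d(v^3)\le 4$.

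For the first bound I would invoke Lemma~\ref{Yilu} with $k=3$: the hypotheses match verbatim, since $H$ is a $3$-balanced $3$-partite $3$-graph with $V_i=\{u_1^i,u_2^i,v^i\}$, no edge contains two of $v^1,v^2,v^3$, and $H$ has no perfect matching. Hence $|E(H)|\le 2(k-1)^k=2\cdot 2^3=16$.

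For the second bound, any edge through $v^3$ has the form $\{x,y,v^3\}$ with $x\in V_1$ and $y\in V_2$. If $x=v^1$ then the edge contains $\{v^1,v^3\}$, and if $y=v^2$ it contains $\{v^2,v^3\}$; both are forbidden by hypothesis. Hence $x\in\{u_1^1,u_2^1\}$ and $y\in\{u_1^2,u_2^2\}$, so there are at most $2\cdot 2=4$ edges through $v^3$, i.e.\ $d(v^3)\le 4$. Combining, $2d(v^3)+d(u_1^3)+d(u_2^3)=|E(H)|+d(v^3)\le 16+4=20$.

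The only subtlety is that the two ingredient bounds must each be tight enough that their sum is exactly $20$; in particular the estimate $d(v^3)\le 4$ must use the forbidden-pair hypothesis at both coordinates of the link of $v^3$, not just one. If one preferred a self-contained argument avoiding Lemma~\ref{Yilu}, one could instead fix an edge $\{x,y,v^3\}$ through $v^3$ and analyze which pairs of disjoint edges through $u_1^3$ and $u_2^3$ could extend it to a perfect matching; the absence of such an extension forces enough potential edges of $H$ to be missing to recover $|E(H)|\le 16$ directly. I would present the version using Lemma~\ref{Yilu} for brevity, since there is no genuine obstacle here.
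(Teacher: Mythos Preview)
Your proof is correct and in fact cleaner than the paper's. The paper does \emph{not} invoke Lemma~\ref{Yilu}; instead it argues by a direct case analysis on $d(v^3)$. It first records the trivial bounds $d(v^3)\le 4$ and $d(u_j^3)\le 8$ (the latter since an edge through $u_j^3$ may use at most one of $v^1,v^2$), which already gives the result when $d(v^3)\le 2$. For $d(v^3)=3$ (and analogously $d(v^3)=4$), the paper fixes the three edges through $v^3$, pairs up potential edges through $u_1^3$ and $u_2^3$ that would extend one of these to a perfect matching, and concludes that at least two such edges must be absent, forcing $d(u_1^3)+d(u_2^3)\le 14$.

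Your route---writing $2d(v^3)+d(u_1^3)+d(u_2^3)=|E(H)|+d(v^3)$ via the partition of $E(H)$ by intersection with $V_3$, then applying Lemma~\ref{Yilu} with $k=3$ for $|E(H)|\le 16$ and using $d(v^3)\le 4$---avoids the case split entirely and reuses a lemma already stated in the paper. The paper's argument is self-contained (not relying on the cited result), while yours is shorter and more conceptual; both are valid.
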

\begin{proof} By the definition of edge set $E$, we have $d(v^3) \leq 4$, $d(u_1^3) \leq 8$ and $d(u_2^3) \leq 8$. If $d(v^3) \leq 2$, the result holds obviously. Assume that $d(v^3) = 3$, by symmetry, let $\{u_1^1,u_1^2,v^3\},\{u_1^1,u_2^2,v^3\},\{u_2^1,u_1^2,v^3\} \in E(H)$. At least one of $\{u_2^1,v^2,u_1^3\}$ and $\{v^1,u_2^2,u_2^3\}$ does not belong to $E(H)$; Otherwise $H$ contains a perfect matching, a contradiction. Similarly at least one of $\{u_2^1,v^2,u_2^3\}$ and $\{v^1,u_2^2,u_1^3\}$ does not belong to $E(H)$. Therefore $2d(v^3)+d(u_1^3)+d(u_2^3) \leq 20$. The case  for $d(v^3) = 4$ can be shown similarly.
\end{proof}

When we prove Lemma \ref{lemma11182}, we need the following fact and  two lemmas. Specifically,  these two lemmas have been adapted from Claims 2 and 3 in  \cite{Kuhn2}.
\begin{fact}\label{fact1}\cite{Kuhn2}
Let $B$ be a balanced bipartite graph on $6$ vertices.\\
$\bullet$ If $e(B) \geq 7$ then $B$ contains a perfect matching.\\
$\bullet$ If $e(B) = 6$ then either $B$ contains a perfect matching or $B \cong B_{033}$.\\
$\bullet$ If $e(B) = 5$ then either $B$ contains a perfect matching or $B \cong B_{023}, B_{113}$.
\end{fact}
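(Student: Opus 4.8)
The plan is to reduce everything to the defect form of Hall's theorem, or equivalently to K\"onig's theorem. Write the two colour classes of $B$ as $X=\{x_1,x_2,x_3\}$ and $Y=\{y_1,y_2,y_3\}$. Since $B$ is balanced, it has a perfect matching if and only if $|N_B(S)|\ge|S|$ for every $S\subseteq X$; if this fails, fix a set $S\subseteq X$ with $|N_B(S)|<|S|$ and split according to $|S|\in\{1,2,3\}$, counting edges in each case.

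If $|S|=1$ then some vertex of $X$, say $x_1$, is isolated, so every edge lies between $\{x_2,x_3\}$ and $Y$ and $e(B)\le 6$; equality forces $\deg_B(x_2)=\deg_B(x_3)=3$, i.e. $B\cong B_{033}$, while $e(B)=5$ forces the $X$-side degree sequence $(0,2,3)$, i.e. $B\cong B_{023}$. If $|S|=2$, say $N_B(\{x_1,x_2\})\subseteq\{y_1\}$, then $\deg_B(x_1),\deg_B(x_2)\le 1$ and $\deg_B(x_3)\le 3$, so $e(B)\le 5$; equality forces $x_1y_1,x_2y_1\in E(B)$ and $x_3$ adjacent to all of $Y$, which is precisely $B_{113}$ (the one of the two bipartite graphs with $X$-side degrees $(1,1,3)$ in which the two degree-one vertices share their unique neighbour, the other realisation having a perfect matching). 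If $|S|=3$ then $N_B(X)$ omits some $y_j$, so $y_j$ is isolated; this is the mirror of the case $|S|=1$ with the roles of $X$ and $Y$ swapped, and up to isomorphism it yields $B\cong B_{033}$ when $e(B)=6$ and $B\cong B_{023}$ when $e(B)=5$.

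Combining the cases gives the three bullets at once: if $e(B)\ge 7$ then none of the three cases can occur, since each of them forces $e(B)\le 6$, hence $B$ has a perfect matching; if $e(B)=6$ then only the cases $|S|=1$ and $|S|=3$ are possible, and both give $B\cong B_{033}$; and if $e(B)=5$ then the cases $|S|=1,3$ give $B\cong B_{023}$ and the case $|S|=2$ gives $B\cong B_{113}$. I do not expect a genuine obstacle here: the only points needing care are that $|S|=3$ really does correspond to an isolated vertex on the $Y$-side, so that the edge count is symmetric with the $|S|=1$ case, and that at the thresholds $e(B)=5,6$ the extremal graph is unique up to isomorphism — in particular that $B_{113}$ must be distinguished from the other realisation of the degree sequence $(1,1,3)$. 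An essentially equivalent and even shorter route is to invoke K\"onig's theorem directly: the absence of a perfect matching yields a vertex cover of size at most $2$, two vertices cover at most $6$ edges (and at most $5$ if they lie on opposite sides), and reading off which size-$2$ covers meet exactly $6$ or $5$ edges produces the same list of extremal graphs.
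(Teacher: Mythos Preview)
Your proof is correct. The paper does not actually prove this statement: it is labelled as a Fact and cited from \cite{Kuhn2} (K\"uhn, Osthus and Treglown), so there is nothing in the paper to compare your argument to. Your Hall/K\"onig approach is the natural one, and your case analysis is sound; the only delicate points are exactly the ones you flagged, namely that the $|S|=3$ case is the mirror image of $|S|=1$ (and the resulting graphs are indeed isomorphic to $B_{033}$ and $B_{023}$ after swapping sides), and that among the two bipartite realisations of the $X$-degree sequence $(1,1,3)$ only the one with a common neighbour fails to have a perfect matching.
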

\begin{figure}[!htbp]
\begin{center}
\includegraphics[scale=0.20]{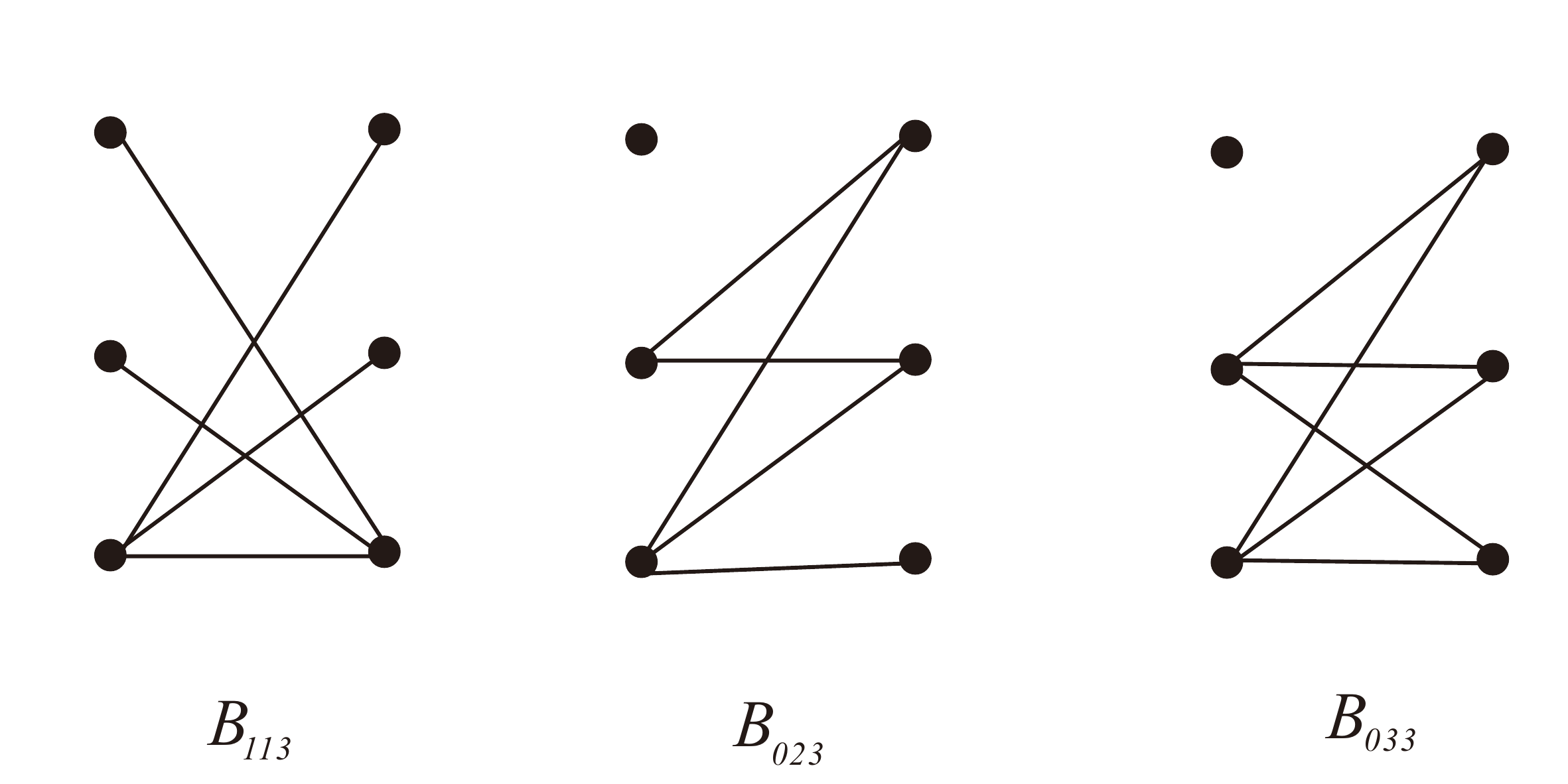}\\
\caption{The graphs $B_{113}$, $B_{023}$ and $B_{033}$.}\label{Figure1}
\end{center}
\end{figure}

\begin{lemma}\cite{Kuhn2}\label{perfectmatching}
Let $0 < 1/n_0 \ll 1/C \ll \rho \ll 1$. Let $M$  be a maximum matching of a 3-graph $H$ of order $n \geq n_0$ and $V_0= V(H)\setminus V(M)$.  Let $V_0'$ denote the set of all those vertices $u\in V_0$ for which there are at least $\rho n^2$ pairs $EF\in \binom{M}{2}$ such
that $L_u(EF)$ contains a perfect matching. Then $|V_0'| \leq C$.
\end{lemma}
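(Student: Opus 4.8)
The plan is to argue by contradiction, using the abundance of ``good'' pairs to enlarge $M$ and so violate its maximality; this parallels the strategy of Claims~2 and~3 in \cite{Kuhn2}. Call a pair $EF\in\binom{M}{2}$ \emph{$u$-good} if the link graph of $u$ restricted to the six vertices of $E\cup F$ contains a perfect matching (i.e.\ a partition of $E\cup F$ into three disjoint pairs, each forming an edge with $u$); by hypothesis every $u\in V_0'$ has at least $\rho n^2$ $u$-good pairs. First I would run an averaging step: since each edge of $M$ covers three vertices, $|M|\le n/3$, hence $|\binom{M}{2}|<n^2/18$. Double counting incidences between $V_0'$ and $u$-good pairs then yields a single pair $EF$ that is $u$-good for more than $18\rho|V_0'|>18\rho C$ vertices $u\in V_0'$; since $1/C\ll\rho$ we may assume $18\rho C\ge 31$, so there are at least $31$ vertices $v_1,v_2,\dots\in V_0'$, pairwise distinct and lying outside $V(M)\supseteq E\cup F$, such that each $L_{v_i}(EF)$ contains a perfect matching of the $6$-set $E\cup F$.

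The key point is a pigeonhole over perfect matchings. A $6$-element set has exactly $15$ perfect matchings, and a perfect matching of the graph $L_{v_i}(EF)$ is in particular one of these $15$ partitions of $E\cup F$ into three pairs. As $31>2\cdot 15$, three of the vertices, say $v_1,v_2,v_3$, have the property that $L_{v_i}(EF)$ contains one and the same partition $\{p_1,p_2,p_3\}$ of $E\cup F$ into disjoint pairs. Then for each $j\in\{1,2,3\}$ we have $p_j\in L_{v_j}(EF)$, i.e.\ $\{v_j\}\cup p_j\in E(H)$; and since $v_1,v_2,v_3$ are distinct and disjoint from $E\cup F$ while $\{p_1,p_2,p_3\}$ partitions $E\cup F$, the three edges $\{v_1\}\cup p_1$, $\{v_2\}\cup p_2$, $\{v_3\}\cup p_3$ are pairwise disjoint with union $\{v_1,v_2,v_3\}\cup E\cup F$.

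Finally, set $M':=(M\setminus\{E,F\})\cup\{\,\{v_1\}\cup p_1,\ \{v_2\}\cup p_2,\ \{v_3\}\cup p_3\,\}$. The three new edges lie entirely inside $\{v_1,v_2,v_3\}\cup E\cup F$, with $v_1,v_2,v_3\in V_0=V(H)\setminus V(M)$, so $M'$ is a matching with $|M'|=|M|+1$, contradicting the maximality of $M$. Hence $|V_0'|\le C$.

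The step I expect to be the crux is the passage from ``each of three links on a fixed pair contains \emph{some} perfect matching'' to an actual augmentation of $M$: this really requires the three matchings to be \emph{compatible} (a rainbow perfect matching across the three link graphs), which for three arbitrary graphs on a common $6$-set need not exist. The averaging argument is exactly what rescues this — it forces many vertices of $V_0'$ onto a single pair $EF$, and then the bounded number ($15$) of perfect matchings of a $6$-set lets pigeonhole extract three vertices sharing one matching, which is automatically rainbow. The quantitative hypotheses ($\rho n^2$ good pairs per vertex and $C\gg 1/\rho$) are calibrated precisely so this averaging-plus-pigeonhole goes through; the bound $n\ge n_0$ is needed only to absorb the lower-order term in $|\binom{M}{2}|<n^2/18$.
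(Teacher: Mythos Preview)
Your argument is correct and is essentially the standard double-counting plus pigeonhole approach used in \cite{Kuhn2}; the present paper does not reprove the lemma but merely cites Claim~2 of \cite{Kuhn2}, so there is no independent proof in the paper to compare against. One small remark: in the paper's usage $L_u(EF)$ denotes the \emph{bipartite} link graph between $E$ and $F$ (cf.\ Fact~\ref{fact1} and Lemma~\ref{b023b033}, which refer to the bipartite graphs $B_{023},B_{033}$), so a perfect matching of $L_u(EF)$ is in fact one of only $3!=6$ bipartite perfect matchings of $K_{3,3}$ rather than one of the $15$ perfect matchings of a general $6$-set; your bound of $15$ is therefore loose but harmless, and in fact $18\rho C\ge 13$ would already suffice for the pigeonhole step.
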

\begin{lemma}\cite{Kuhn2}\label{b023b033}
Let $0 < 1/n_0 \ll 1/C \ll \rho \ll 1$. Let $M$  be a maximum matching of a 3-graph $H$ of order $n\geq n_0$ and $V_0= V(H)\setminus V(M)$.  Let $V_0''$ denote the set of all those vertices $u\in V_0$ for which there are at least $\rho n^2$ pairs $EF\in \binom{M}{2}$ such
that $L_u(EF) \cong B_{023}, B_{033}$. Then $|V_0''| \leq C$.
\end{lemma}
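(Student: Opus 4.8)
The plan is to argue by contradiction. Suppose $|V_0''|>C$; I will combine the links of the vertices of $V_0''$ to produce a matching of size $|M|+1$, contradicting the maximality of $M$. The engine is the following augmenting principle: let $u_1,u_2,u_3\in V_0$ be distinct, let $E,F\in M$ be distinct, and regard each $L_{u_i}(E,F)$ as a bipartite graph with vertex classes $E$ and $F$. If these three graphs have a \emph{rainbow} perfect matching, i.e.\ a partition $E\cup F=P_1\cup P_2\cup P_3$ into pairs with $P_i\in L_{u_i}(E,F)$, then $(M\setminus\{E,F\})\cup\{\{u_i\}\cup P_i:i=1,2,3\}$ is a matching of size $|M|+1$. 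Hence no three vertices of $V_0$ and two edges of $M$ admit such a rainbow perfect matching, and the whole task is to force one to exist.

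First I would average. Since $|M|\le n/3$ we have $\binom{|M|}{2}\le n^2/18$, so counting the incidences between vertices $u\in V_0''$ and pairs $\{E,F\}$ with $L_u(E,F)\cong B_{023}$ or $B_{033}$, some pair $\{E,F\}$ is shared by at least $18\rho|V_0''|>18\rho C$ such vertices, which is a large constant because $1/C\ll\rho$. Fix such a pair and a large constant-size set $W\subseteq V_0''$ of vertices $u$ with $L_u(E,F)\cong B_{023}$ or $B_{033}$; write $z(u)\in E\cup F$ for the unique isolated vertex of $L_u(E,F)$, and note that after deleting $z(u)$ the surviving bipartite graph (a $K_{2,3}$ or $K_{3,2}$ minus at most one edge) is near-complete. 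If some two vertices $u_1,u_2\in W$ satisfy $z(u_1)\ne z(u_2)$, then for an arbitrary $u_3\in W$ the three links admit a rainbow perfect matching: each covers all of $E\cup F$ except its own isolated vertex and is near-complete there, so among the six perfect matchings of $K_{3,3}$ only a bounded number are excluded by the three isolated vertices and the at most three missing edges, and a surviving one can be rainbow-coloured by the three links. This contradicts the augmenting principle. Consequently all $u\in W$ share one isolated vertex $z$, which by symmetry we may take to lie in $E$; thus $\{u,z,f\}\notin E(H)$ for every $u\in W$ and every $f\in F$.

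The hard part is this degenerate, aligned configuration: on $\{u_1,u_2,u_3\}\cup E\cup F$ no edge we can yet name covers $z$ except $E$ itself, so the direct argument breaks down. To finish I would iterate the averaging after discarding every pair of $M$ that meets $E$: each $u\in W$ still has at least $\rho n^2-|M|$ good pairs disjoint from $E$, so some pair $\{E',F'\}\subseteq M\setminus\{E\}$ is shared by a large constant subset $W'\subseteq W$. For $u\in W'$ one then has two near-complete link structures, on $E\cup F$ and on $E'\cup F'$, together with the fixed edge $E$ whose two vertices $E\setminus\{z\}$ are joined through every $u\in W'$ to all of $F$. The idea is to partition a suitable twelve-vertex set (the three $u_i$'s together with three edges of $M$) into four edges of $H$ by using $E$ itself to absorb $z$ and distributing the remaining nine vertices among $u_1,u_2,u_3$ and the two retained near-complete structures — a rainbow-matching computation analogous to, but more involved than, the non-degenerate case. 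Alternatively one can try to appeal here to Lemma~\ref{perfectmatching}: the aligned structure should force a perfect-matching link $L_u(E',F')$ at one of the $u\in W'$ for many pairs, which is exactly what Lemma~\ref{perfectmatching} forbids beyond $C$ vertices.

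Either way, I expect the technical burden to be twofold: carrying out the case analysis that turns the aligned-isolated-vertex structure into an honest augmenting configuration on twelve (or fifteen) vertices, and checking that the constants can be taken with $1/n_0\ll 1/C\ll\rho\ll 1$ so that each averaging step still leaves a set of size much larger than any fixed constant. The non-degenerate dichotomy and the augmenting principle are routine; the aligned case is the crux.
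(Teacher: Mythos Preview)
The paper does not give its own proof of this lemma; it is quoted from K\"uhn--Osthus--Treglown~\cite{Kuhn2} (adapted from their Claim~3), so there is no in-paper argument to compare against. Your averaging-plus-rainbow-augmentation framework is the right one, and the non-degenerate step is essentially correct: when two vertices $u_1,u_2$ in the popular set $W$ for a fixed pair $\{E,F\}$ have distinct isolated vertices $z(u_1)\ne z(u_2)$ in their links, a short (but not entirely trivial) case check over the possible positions of $z(u_3)$ and the at most one extra missing edge in each $B_{023}$ does produce a rainbow perfect matching for any third $u_3\in W$, contradicting the maximality of $M$. You should actually write that case analysis out rather than declare it routine.

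The genuine gap is the aligned case, which you correctly flag as the crux but do not close. Your first suggestion is to iterate: pass to a second popular pair $\{E',F'\}$ for a large $W'\subseteq W$, keep $E$ intact to ``absorb $z$'', and find three disjoint edges on $\{u_1,u_2,u_3\}\cup E'\cup F'$. But that last task is exactly a rainbow perfect matching on $\{E',F'\}$, and nothing prevents $W'$ from being aligned on $\{E',F'\}$ as well, with a common isolated vertex $z'$ --- in which case you are stuck for the same reason as before, and the situation can recur at every stage of the iteration. Your alternative, to invoke Lemma~\ref{perfectmatching}, is a non-sequitur: that lemma bounds the vertices $u$ possessing many pairs whose link \emph{has} a perfect matching, whereas every pair you are considering has $L_u(E',F')\cong B_{023}$ or $B_{033}$, neither of which has one; and the aligned structure on $\{E,F\}$ carries no information about links on disjoint pairs. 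As written the argument does not terminate. You will need either the actual mechanism from~\cite{Kuhn2} or a new idea that genuinely breaks the aligned configuration --- for instance, tracking the degree-$3$ ``centre'' vertex of each $B_{023}/B_{033}$ link across many pairs rather than fixing a single popular pair.
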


\section{Absorbing lemma \ref{absorbinglemma1}}

{ \noindent \bf Proof of Lemma  \ref{absorbinglemma1}.}  Let $W' = \{w \in V(H): d(w) \leq (\frac{1}{2}+\varepsilon)\binom{n}{k-1} \}$ and $U' = V(H)\setminus W'$. Clearly no edge contains two vertices from $W'$. For  $A \subseteq \binom{V(H)}{k}$, we call a set $T \in \binom{V}{2k^2}$ of size $2k^2$ an absorbing $2k^2$-set for $A$ if both $H[T]$ and $H[A\cup T]$ contain a perfect matching.  We have the following claim.
\begin{claim}\label{claim32}
For every $A \subseteq \binom{V(H)}{k}$, there are at least  $ \varepsilon^{4k} \binom{n}{k-1}^{2k} n^{2k}$ absorbing $2k^2$-sets for $A$.
\end{claim}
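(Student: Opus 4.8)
The plan is to prove Claim~\ref{claim32} by a counting argument: for a fixed $k$-set $A = \{a_1, \ldots, a_k\}$, I will exhibit an explicit family of candidate absorbing $2k^2$-sets built one edge at a time, and lower-bound the number of ways to complete the construction using the degree-sum hypothesis. The key structural observation is that $W'$ is an independent set, so at most one vertex of $A$, say possibly $a_1$, could fail to have large degree; every other $a_j$ satisfies $\deg(a_j) > (\tfrac12+\varepsilon)\binom{n}{k-1}$ once we note that if two vertices $a_i, a_j$ of the edge $A$ both had degree $\le (\tfrac12+\varepsilon)\binom{n}{k-1}$ their degree sum would violate the hypothesis. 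So ``most'' vertices of $H$ are high-degree, and in particular every vertex has a useful amount of room to be matched.

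First I would handle an arbitrary vertex $a_j \in A$: since $a_j$ lies in some edge and is not isolated, and since $\deg(u)+\deg(a_j) > (1+2\varepsilon)\binom{n}{k-1}$ for the other endpoint $u$ of that edge, we get that either $a_j$ itself is high-degree or all its neighbors are. In both cases I can choose an edge $e_j \ni a_j$ and then, crucially, for each of the $k-1$ other vertices $w$ of $e_j$ choose a ``partner edge'' $f_w$ through $w$ avoiding all previously used vertices; the number of such choices is at least $(\tfrac12+\varepsilon)\binom{n}{k-1} - O(1)\cdot n^{k-2} \ge \varepsilon \binom{n}{k-1}$ once $n$ is large, because at each step only $O(k^2)$ vertices are forbidden. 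Doing this for all $k$ vertices of $A$ produces, after discarding $A$ itself, a collection of $k(k-1)$ edges on $2k^2 - k$ new vertices — wait, more carefully: I want the final set $T$ to have exactly $2k^2$ vertices and to satisfy both that $H[T]$ has a perfect matching and $H[A \cup T]$ has a perfect matching. The standard trick: build $k$ edges $e_1, \ldots, e_k$ with $a_j \in e_j$ and all other $k(k-1)$ vertices distinct and new; then for each non-$A$ vertex $w$ of $\bigcup e_j$ pick a disjoint partner edge $f_w$ (not meeting $A$ or any other chosen vertex). Then $T := \big(\bigcup_j e_j \setminus A\big) \cup \big(\bigcup_w f_w\big)$ has $k(k-1) + k(k-1)^2$... this needs adjusting so the total is $2k^2$; the actual paper presumably fixes the constants by taking a slightly different but equal-size gadget, so I would follow that bookkeeping. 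The point is: $H[A \cup T]$ has the perfect matching $\{e_1,\ldots,e_k\} \cup \{f_w : w\}$, and $H[T]$ has the perfect matching $\{$the edges among the $w$'s$\} \cup \{f_w\}$ — these use the same vertex set $T$.

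The counting then goes: the number of ordered ways to pick $(e_1, \ldots, e_k, (f_w)_w)$ is at least $\prod (\varepsilon\binom{n}{k-1}) \cdot (\text{polynomial in } n) \ge \varepsilon^{4k}\binom{n}{k-1}^{2k} n^{2k}$ after absorbing the polynomial factors and dividing by the $O(1)$ overcount from orderings; here the exponents $2k$ are dictated by the size $2k^2 = 2k \cdot k$ of $T$ and the fact that each edge contributes a $\binom{n}{k-1}$ or $n$ factor. Once Claim~\ref{claim32} holds, the rest of Lemma~\ref{absorbinglemma1} is the routine probabilistic absorbing argument: select each $2k^2$-set independently with probability $p = \varepsilon^{?} n^{-2k^2+1}$ (tuned so the expected number is $\varepsilon^{4k+1} n / (2k^2)$ up to constants), use Chernoff (Lemma~\ref{Lemma33}) to show that with positive probability the chosen family $\mathcal{F}$ has size $O(p\binom{n}{2k^2})$, has only $o(\cdot)$ intersecting pairs, and retains $\ge \varepsilon^{8k+2} n$ absorbing sets for every single $k$-set $A$; delete one set from each intersecting pair and from each ``bad'' set to get a matching $M$ (the union of perfect matchings of the surviving disjoint $2k^2$-sets) with the absorbing property, then absorb any $V'$ of size $\le \varepsilon^{8k+2}n$ with $|V'| \equiv 0 \pmod k$ by greedily absorbing $k$ vertices at a time.

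The main obstacle I expect is Claim~\ref{claim32}, specifically getting the constant exponent $\varepsilon^{4k}$ and the powers of $\binom{n}{k-1}$ and $n$ to come out exactly as stated while respecting the hard constraint that $W'$ is independent. The subtlety is that when $a_j \in W'$ (low degree), one cannot pick the first edge $e_j$ freely through $a_j$ — instead one must go through a high-degree neighbor, which costs a factor and forces a slightly asymmetric construction; ensuring that this still yields the clean bound $\varepsilon^{4k}\binom{n}{k-1}^{2k}n^{2k}$ rather than something weaker is where the degree-sum hypothesis (as opposed to a minimum-degree hypothesis) has to be used carefully, and this is presumably why the paper states a separate ``degree sum version'' of the absorbing lemma. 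Everything after the claim is standard and I would cite the usual Chernoff-plus-deletion template.
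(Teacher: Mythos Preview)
Your framework is right (build an explicit gadget, count, then do the Chernoff-plus-deletion step), and you correctly identify Claim~\ref{claim32} as the crux. But the gadget you sketch does not actually have the absorbing property, and this is a genuine gap rather than bookkeeping.

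Concretely: with $T=\big(\bigcup_j e_j\setminus A\big)\cup\big(\bigcup_w f_w\big)$ your claimed matching $\{e_1,\dots,e_k\}\cup\{f_w\}$ of $H[A\cup T]$ is not a matching, because each $w$ lies in both its $e_j$ and its $f_w$. If instead you use only $\{e_1,\dots,e_k\}$ on $A\cup\{w\text{'s}\}$, the leftover vertices are the $(k-1)$-sets $f_w\setminus\{w\}$, and nothing guarantees these span any edges at all. Likewise your ``edges among the $w$'s'' for $H[T]$ were never constructed. So neither perfect matching exists in your $T$, regardless of how you patch the size (which is $k^2(k-1)$, not $2k^2$, for general $k$).

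The missing idea is a \emph{swap}. The paper's gadget is: for each $u_i\in A$ choose a high-degree vertex $u_{k+i}\in U'$ adjacent to $u_i$ (possible since every vertex has degree $>2\varepsilon\binom{n}{k-1}$ by the degree-sum hypothesis); then choose an edge $e=\{u_{2k+1},\dots,u_{3k}\}\subseteq U'$ disjoint from $\{u_1,\dots,u_{2k}\}$; finally, for each $i\in[2k]$ choose a $(k-1)$-set $B_i$ with \emph{both} $B_i\cup\{u_i\}$ and $B_i\cup\{u_{k+i}\}$ edges of $H$ (here $\deg(u_i)+\deg(u_{k+i})>(1+2\varepsilon)\binom{n}{k-1}$ --- either by adjacency for $i\le k$ or because both lie in $U'$ for $i>k$ --- gives $\ge \varepsilon\binom{n}{k-1}$ choices). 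Set $T=\{u_{k+1},\dots,u_{3k}\}\cup B_1\cup\cdots\cup B_{2k}$, of size exactly $2k^2$. Then $H[T]$ has the matching $\{B_i\cup\{u_{k+i}\}:i\in[2k]\}$, while $H[A\cup T]$ has $\{B_i\cup\{u_i\}:i\in[2k]\}\cup\{e\}$: each $B_i$ ``shifts'' from $u_{k+i}$ to $u_i$, and $e$ picks up the freed vertices $u_{2k+1},\dots,u_{3k}$. The count $(\varepsilon n)^k\cdot\big(\varepsilon^2\binom{n}{k-1}n/(k+1)\big)\cdot\big(\varepsilon\binom{n}{k-1}\big)^{2k}\ge \varepsilon^{4k}\binom{n}{k-1}^{2k}n^{2k}$ then follows. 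Your handling of the probabilistic step after Claim~\ref{claim32} is fine and matches the paper.
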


\begin{proof}Let $A=\{u_1,u_2,\cdots,u_k\}$.  Since $H$ does not contain isolated vertices, there exists $v_i$ adjacent to $u_i$ for $i \in [k]$. Clearly $\deg(u_i)+\deg(v_i) > (1+2\varepsilon)\binom{n}{k-1}$ and $ \deg(v_i) \leq \binom{n-1}{k-1}$. Therefore  $\deg(u_i)> 2\varepsilon\binom{n}{k-1}$ for any $i \in \{1,2,\cdots,k\}$. This implies that we can select at least $ \varepsilon n$ vertices $u_{k+i}\in U'\setminus \{u_1,\cdots,u_{k+i-1}\}$ adjacent to $u_{i}$ for $i \in [k]$. Otherwise if $u_i\in W'$ then $\deg(u_i) \leq \binom{\varepsilon n+2k}{k-1} < 2\varepsilon\binom{n}{k-1}$; if $u_i\in U'$ then $\deg(u_i) \leq |W|\binom{\varepsilon n+2k}{k-2}+\binom{\varepsilon n+2k}{k-1} < (\frac{1}{2}+\varepsilon)\binom{n}{k-1}$, a contradiction.
\begin{figure}[!htbp]
\begin{center}
\includegraphics[scale=0.18]{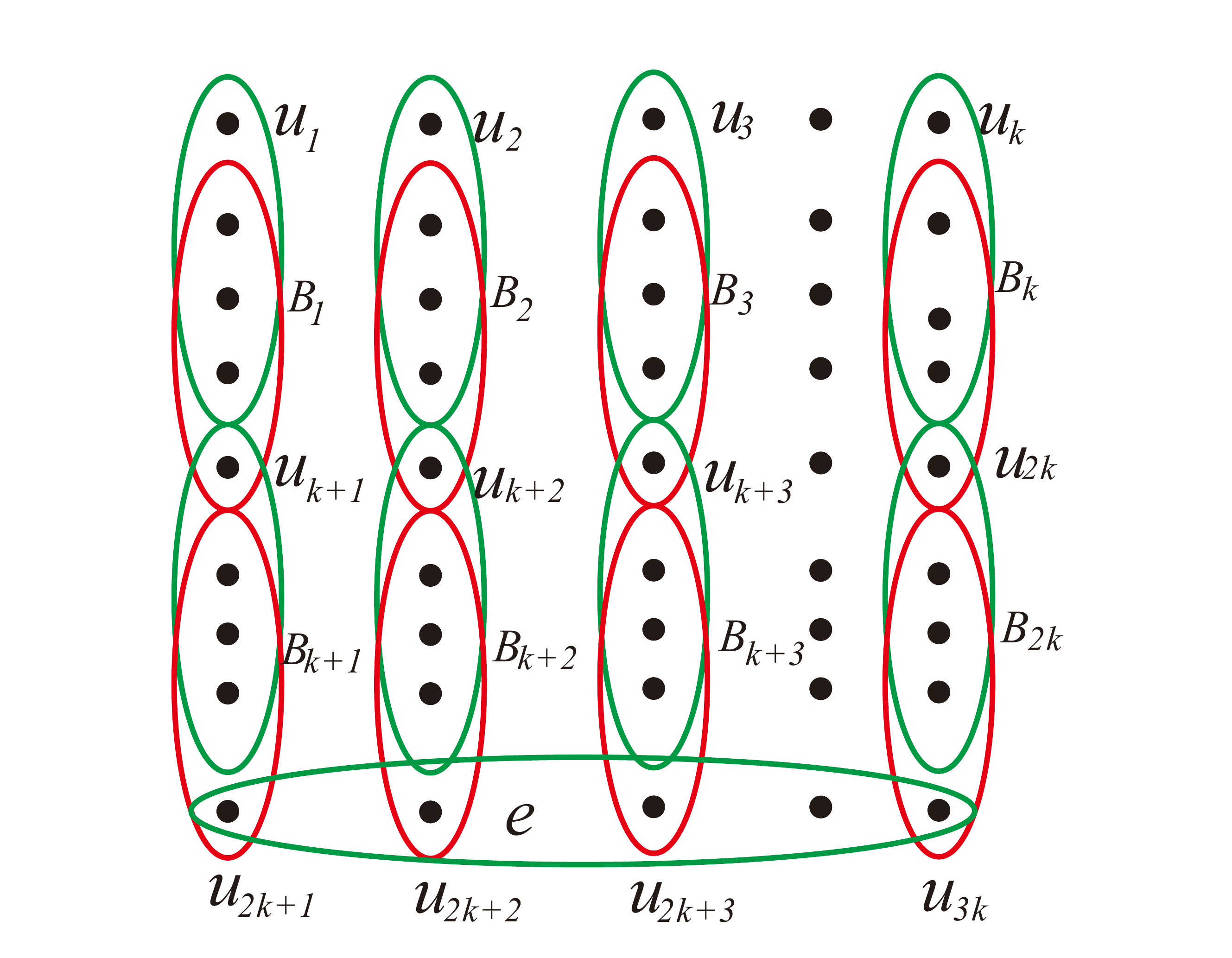}\\
\caption{An absorbing $2k^2$-sets $T$ for $A$ ($T$ is red and$H[A \cup T]$ is green).}
\end{center}
\end{figure}
Next we prove that there are at least $ \varepsilon^2\binom{n}{k-1}n/(k+1)$ edges $e \subseteq U'$ disjoint with $\{u_1,\cdots,u_{2k}\}$.
For each vertex $u \in U'$, $d(u) > (\frac{1}{2}+\varepsilon)\binom{n}{k-1}$.  Since $n$ is sufficiently large, it is not difficult to check that there are at most $|W'| \binom{|U'|}{k-2} < \frac{1}{2}\binom{n}{k-1}$ edges consisting of $u$ and one vertex of $W'$. Consequently $H$ contains at least $\varepsilon\binom{n}{k-1}$ edges $e \subseteq U'$ that include $u$.  Moreover $|U'| > \varepsilon n$; Otherwise $\deg(u) \leq |W'|\binom{\varepsilon n}{k-2}+\binom{\varepsilon n}{k-1} < \frac{1}{2}\binom{n}{k-1}$ for $u\in U'$, a contradiction. We further derive that the number of edges $e \subseteq U'$ is greater than $ \varepsilon^2\binom{n}{k-1}n/k $. Therefore there are at least  $ \varepsilon^2\binom{n}{k-1}n/(k+1)$ edges $e \subseteq U'$ disjoint with $\{u_1,\cdots,u_{2k}\}$ as $ \varepsilon^2\binom{n}{k-1}n/k -\binom{2k}{1}\binom{n}{k-1}> \varepsilon^2\binom{n}{k-1}n/(k+1).$ Let  $e=\{u_{2k+1},\cdots,u_{3k}\}$ be one such edge. By the choice of $u_{k+1},\cdots,u_{3k}$,
we obtain $\deg(u_{i})+\deg(u_{k+i}) > (1+2\varepsilon)\binom{n}{k-1}$ for $i \in [2k]$.  This implies that we can select $\varepsilon\binom{n}{k-1}$ sets $B_i$ such that $B_i$ is disjoint to $ \bigcup _{j\in[i-1]}B_{j} \cup \{u_1,\cdots,u_{3k}\}$ and both $B_i \cup \{u_i\}$ and $B_i \cup \{u_{k+i}\}$ form edges in $H$. Indeed, because there are at most $2(2k^2+k)\binom{n}{k-2}$ edges consisting of $u_i$ or $u_{k+i}$  and one vertex in $ \bigcup _{j\in[i-1]}B_{j} \cup \{u_1,\cdots,u_{3k}\}$, and  $2\varepsilon\binom{n}{k-1}- 2(2k^2+k)\binom{n}{k-2} > \varepsilon\binom{n}{k-1}$. Thus there are at least  $(\varepsilon n)^k (\varepsilon^2\binom{n}{k-1}n/(k+1))(\varepsilon \binom{n}{k-1})^{2k} \geq \varepsilon^{4k} \binom{n}{k-1}^{2k} n^{2k}$ absorbing $2k^2$-sets for $A$.
\end{proof}

Let $\mathcal{L}(A)$ denote the family of all those $2k^2$-sets absorbing $A$. By Claim \ref{claim32}, we know $|\mathcal{L}(A)| \geq  \varepsilon^{4k} \binom{n}{k-1}^{2k} n^{2k}$. Let $\mathcal{F}$ be a family of 2$k^2$-sets by selecting each of the $\binom{n}{2k^2}$ possible $2k^2$-sets independently with probability
\begin{align*}
 p = \frac{\varepsilon^{4k+1} n}{\triangle} \ \ \ \text{with}  \ \ \
\triangle =2 \binom{n}{k-1}^{2k} n^{2k}  \geq 2n \binom{n}{2k^2-1} \geq 4k^2 \binom{n}{2k^2}.
\end{align*}
Clearly
\begin{align*}
 \mathbb{E} |\mathcal{F}| \leq \frac{\varepsilon^{4k+1} n}{4k^2} \ \ \ \text{and}  \ \ \   \mathbb{E} |\mathcal{L}(A) \cap \mathcal{F}| \geq \frac{ \varepsilon^{8k+1} n}{2} \ \ \ \forall \ A \in \binom{V}{k}.
\end{align*}
By Lemma \ref{Lemma33}, with probability $1-o(1)$, as $n \rightarrow \infty$ the family $ \mathcal{F}$ has the following properties:
\begin{align}\label{1}
 |\mathcal{F}| \leq \frac{\varepsilon^{4k+1} n}{3k^2}  \ \ \ \text{and}  \ \ \ |\mathcal{L}(A) \cap \mathcal{F}| \geq \frac{\varepsilon^{8k+1} n}{3}, \ \ \ \forall \ A \in \binom{V}{k}.
\end{align}
Furthermore, we can bound the expected number of intersecting $2k^2$ sets by
\begin{align*}
 \binom{n}{2k^2} \times 2k^2 \times \binom{n}{2k^2-1} \times p^2 \leq \frac{\varepsilon^{8k+2}n}{4}.
\end{align*}
By Markov's inequality,
\begin{align*}
 P\Big(\text{$\mathcal{F}$ contains at least $\varepsilon^{8k+2}n$ intersecting pairs} \Big) \leq \frac{\varepsilon^{8k+2}n/4}{\varepsilon^{8k+2}n} =\frac{1}{4}.
\end{align*}
As a result, with probability at least $3/4$,
\begin{align}\label{3}
 \text{$\mathcal{F}$ contains at most $\varepsilon^{8k+2}n$ intersecting pairs.}
\end{align}

It follows that the family $\mathcal{F}$ possesses all the properties stated in (\ref{1}) and (\ref{3})  with positive probability. Remove all the intersecting and non-absorbing $2k^2$-sets from $\mathcal{F}$, we obtain a subfamily $\mathcal{F'}$ of size $|\mathcal{F'}| \leq \frac{\varepsilon^{4k+1} n}{3k^2} $ consisting of pairwise disjoint absorbing $2k^2$-sets which satisfies
 \begin{align*}
 |\mathcal{L}(A) \cap \mathcal{F'}| \geq \frac{ \varepsilon^{8k+1} n}{2}-\varepsilon^{8k+2}n\geq\varepsilon^{8k+2}n, \ \ \ \forall \ A \in \binom{V}{k}.
\end{align*}

Because $\mathcal{F'}$ consists of pairwise disjoint absorbing $2k^2$-sets, $H[V(\mathcal{F'})]$ contains a
perfect matching $M$ of size at most $2\varepsilon^{4k+1}n/3k$. We can partition $V'$ into $|V'|/k$ $k$-sets.  Since $|V'|/k \leq \varepsilon^{8k+2}n$, we
can greedily absorb these $|V'|/k$ parts  using $|V'|/k$ different absorbing $2k^2$-sets respectively. Therefore there
exists a matching covering exactly the vertices in $V(\mathcal{F'}) \cup V'$.
\qed

\section{Proof of Lemma \ref{lemma11181}}

Since $n/3 \geq|W| \geq n/3- \tau n$, we have  $2n/3 \leq |U| \leq 2n/3+ \tau n$.   It follows that $d(w) \leq  \binom{|U|}{2} \leq \binom{2n/3+ \tau n}{2}$ and $d(u) \leq  \binom{n}{2}-\binom{|W|}{2} \leq \binom{n}{2}-\binom{n/3- \tau n}{2}$ for any vertex $w \in W, u \in U$. Every vertex $w \in W$ is adjacent to one vertex $u \in U$, we obtain that $d(w) > \big(\frac{3}{5}+\gamma\big)n^2- (\binom{n}{2}-\binom{n/3- \tau n}{2})\geq \big(\frac{7}{45}+\gamma-\tau\big) n^2$.  Furthermore, we claim that each vertex $u\in U$ is adjacent to one vertex of $W$. Otherwise $d(u) \leq  \binom{|U|}{2} \leq \binom{2n/3+ \tau n}{2} < (\frac{3}{10}+\frac{1}{2}\gamma)n^2$, a contradiction. Therefore  $$d(u) > \Big(\frac{3}{5}+\gamma\Big)n^2-d(w) \geq \Big(\frac{3}{5}+\gamma\Big)n^2- \binom{|U|}{2} \geq \Big(\frac{3}{5}+\gamma\Big)n^2- \binom{2n/3+ \tau n}{2}\geq \Big(\frac{17}{45}+\gamma-\tau \Big) n^2.$$  First, we have the following claim.
\begin{claim}\label{lemma24}
$H$ contains a matching covering every vertex of $W$.
\end{claim}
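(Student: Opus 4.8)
\textbf{Proof proposal for Claim \ref{lemma24}.}

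The plan is to build the desired matching greedily, edge by edge, processing the vertices of $W$ one at a time; the work is to check that at each step there is still an available edge through the current vertex. Order the vertices of $W$ as $w_1,\ldots,w_{|W|}$, and suppose we have already selected disjoint edges $e_1,\ldots,e_{i-1}$ with $w_j\in e_j$ for $j<i$ and each $e_j\subseteq \{w_j\}\cup U$ (recall no edge of $H$ contains two vertices of $W$, so every edge meeting $W$ has exactly one vertex in $W$ and two in $U$). To extend, I want an edge $\{w_i,x,y\}$ with $x,y\in U\setminus\big(\bigcup_{j<i}e_j\big)$. The forbidden pairs are those meeting the set $Z$ of at most $2|W|\le \tfrac{2}{3}n$ vertices of $U$ already used. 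The number of pairs in $N_H(w_i)$ that meet $Z$ is at most $|Z|\cdot n\le \tfrac{2}{3}n^2$. Since $d(w_i)=|L_{w_i}(V(H))|\ge |L_{w_i}(U)|-\text{(pairs meeting }W)$, and pairs meeting $W$ number at most $|W|\cdot n\le \tfrac13 n^2$, we get that the number of edges through $w_i$ avoiding $Z$ is at least $d(w_i)-\tfrac13 n^2-\tfrac23 n^2 = d(w_i)-n^2$. Using $d(w_i)>\big(\tfrac{7}{45}+\gamma-\tau\big)n^2$ this bound is negative and hence useless as stated, so the naive counting above is too lossy and must be sharpened.

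The fix is to be more careful about where the link of $w_i$ lives. Every neighbour pair of $w_i$ lies inside $V(H)\setminus\{w_i\}$, and at most one vertex of each such pair lies in $W$ (no two $W$-vertices are adjacent), so in fact $L_{w_i}$ is a graph on $U\cup(W\setminus\{w_i\})$ in which $W\setminus\{w_i\}$ is an independent set; thus $d(w_i)\le \binom{|U|}{2}+|U|\,(|W|-1)$, and more to the point the pairs of $L_{w_i}$ entirely inside $U$ number at least $d(w_i)-|U|\,|W|\ge d(w_i)-(\tfrac23 n+\tau n)(\tfrac13 n) = d(w_i)-(\tfrac{2}{9}+O(\tau))n^2$. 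With $d(w_i)>(\tfrac{7}{45}+\gamma-\tau)n^2$ and $\tfrac{7}{45}>\tfrac{2}{9}=\tfrac{10}{45}$ is false, so even this is not enough — which signals that Claim \ref{lemma24} is really being used together with the companion bound $d(u)>(\tfrac{17}{45}+\gamma-\tau)n^2$ for $u\in U$, not the bound on $d(w)$ alone. So the correct approach is a \emph{defect Hall / König-type} argument on the bipartite-like structure: form the auxiliary bipartite graph $\mathcal{G}$ whose one side is $W$ and whose other side is $\binom{U}{2}$, with $w$ joined to $\{x,y\}$ iff $\{w,x,y\}\in E(H)$; a matching covering $W$ in $H$ corresponds to a system of disjoint representatives (pairs that are vertex-disjoint in $U$). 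I would invoke Lemma \ref{lemma1} (the Aharoni–Howard bound), or rather its standard consequence that a $k$-partite $k$-graph with all degrees large has a large matching, after first passing to a balanced $3$-partite subgraph of $H$ between $W$ and two copies of $U$.

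Concretely, here is the route I would actually carry out. Randomly split $U$ into two parts $U_1,U_2$ of equal size; by a Chernoff bound (Lemma \ref{Lemma33}) we may assume that for every $w\in W$ the number of pairs of $L_w$ with one endpoint in $U_1$ and one in $U_2$ is at least, say, $\tfrac14 |L_w(U)| \ge \tfrac14\big(d(w_i)-(\tfrac{2}{9}+O(\tau))n^2\big)$; wait — since that interior count may be small, instead I bound $L_w(U_1,U_2)$ directly: it is at least $d(w)-|U_1||U_2| \cdot (\text{something})$... the clean statement is $\mathbb{E}|L_w(U_1,U_2)| = \tfrac{|U|/2}{|U|}\cdot\tfrac{|U|/2}{|U|-1}\cdot 2\cdot|L_w(U)| \sim \tfrac12 |L_w(U)|$, and $|L_w(U)|\ge d(w)-|U||W|$, so concentration gives $|L_w(U_1,U_2)|\ge \tfrac13\big(d(w)-|U||W|\big)$ for all $w$ simultaneously. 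Now consider the $3$-partite $3$-graph $H'$ on $(W',U_1,U_2)$ where $W'$ is $W$ padded with $|U_1|-|W|$ dummy isolated vertices to balance, with edges $\{w,x,y\}\in E(H)$, $x\in U_1$, $y\in U_2$. If $H'$ had no matching of size $|W|$, then deleting at most $|W|-1$... I would instead apply Lemma \ref{lemma1} after deleting a set hitting all edges through uncovered $W$-vertices: a maximum matching $M'$ of $H'$ missing some $w\in W$ means every edge of $L_w(U_1,U_2)$ meets $V(M')\cap(U_1\cup U_2)$, a set of size $2|M'|\le 2|W|\le \tfrac23 n$, so $|L_w(U_1,U_2)|\le \tfrac23 n\cdot n$, contradicting $|L_w(U_1,U_2)|\ge \tfrac13\big((\tfrac{7}{45}+\gamma-\tau)n^2-(\tfrac29+O(\tau))n^2\big)$ — and here again the arithmetic $\tfrac{7}{45}<\tfrac{2}{9}$ defeats us. \textbf{The main obstacle}, therefore, is exactly this: the bound on $d(w)$ for $w\in W$ is by itself too weak to force a covering matching by pure degree counting, so the real proof must exploit the large degrees of $U$-vertices as well — presumably by first greedily covering only the \emph{low-degree} vertices of $W$ (those with $d(w)$ close to $\binom{|U|}{2}$, whose links are nearly complete on $U$ and hence easy to route) and then handling the remaining, necessarily few, high-degree $W$-vertices using an absorbing or augmenting argument backed by the $U$-degree bound. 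I would set this up by splitting $W=W_{\mathrm{lo}}\cup W_{\mathrm{hi}}$ at the threshold $\binom{|U|}{2}-n^{3/2}$, cover $W_{\mathrm{lo}}$ greedily (each such $w$ has all but $o(n^2)$ pairs of $\binom{U}{2}$ in its link, so any $\le \tfrac23 n$ used vertices block only $o(n^2)$ pairs), and then argue $|W_{\mathrm{hi}}|$ is small enough that the remaining vertices of $U$, which individually still have degree $>(\tfrac{17}{45}+\gamma-\tau-o(1))n^2$ into the leftover set, admit the final matching via Lemma \ref{lemma1} applied to the $3$-partite reduct on $(W_{\mathrm{hi}},U',U')$.
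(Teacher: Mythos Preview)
Your proposal does not arrive at a proof; it is a sequence of approaches that you yourself diagnose as failing, capped by a sketch that also fails. The final idea---cover $W_{\mathrm{lo}}$ greedily, then $W_{\mathrm{hi}}$ by augmenting---breaks at the greedy step for the same arithmetical reason your earlier attempts did: covering the vertices of $W$ consumes two $U$-vertices each, and since $|W|$ is close to $n/3$ while $|U|$ is close to $2n/3$, by the time you are partway through $W_{\mathrm{lo}}$ the number of used $U$-vertices $2i$ satisfies $2i\cdot|U|>\binom{|U|}{2}$, so even a $w$ with link missing only $n^{3/2}$ pairs has no surviving edge. No amount of ordering or thresholding within $W$ rescues pure greedy counting here; the problem is global, not local.

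The paper's proof is structurally different and you missed its key mechanism. One takes a \emph{maximum} matching $M$ with one $W$-vertex per edge, assumes some $w\in W$ is uncovered, and picks two free vertices $u_1,u_2\in U\setminus V(M)$. The triple $Q=\{u_1,u_2,w\}$ has total degree at least $(\tfrac{41}{45}+3\gamma-3\tau)n^2$ from the individual lower bounds on $d(w)$ and $d(u_i)$---this is where the large $U$-degrees enter, not via any separate routine for $W_{\mathrm{hi}}$. One then bounds $\sum_{v\in Q}d(v)$ from above by decomposing every edge through $Q$ according to where its other two vertices land: both in one edge of $M$; in two different edges of $M$; split between $V(M)$ and its complement; or both outside $V(M)$. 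The maximality of $M$ forces structural constraints on the second and third types---specifically, the $3$-partite $3$-graph on $Q\cup e_1\cup e_2$ has no perfect matching (Lemma~\ref{Yilu} gives $\le 16$ edges), and the link graphs $L_{e}(V(H)\setminus V(M))$ satisfy the intersecting hypothesis of Lemma~\ref{Lemma4}---yielding an upper bound of roughly $(\tfrac{8}{9}+O(\tau))n^2$. Since $\tfrac{8}{9}<\tfrac{41}{45}$, this is the contradiction. The point you were missing is that maximality of $M$, not raw degree, supplies the leverage once the degrees of \emph{all three} vertices in $Q$ are summed.
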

\begin{proof}
Let $M$ be a largest matching such that each edge $e \in M$ contains one vertex of $W$. Set $w \in W\setminus V(M)$ and $u_1,u_2 \in U\setminus V(M)$.  We have $d(u_1)+d(u_2)+d(w) \geq \big(41/45+3\gamma-3\tau\big) n^2$.

 Obviously, $H$ contains at most $7|M|$ edges consisting of one vertex from $Q=\{u_1,u_2,w\}$ and two vertices from the same edge of $M$, at most $2 \binom{|U\setminus V(M)|}{2}$ edges with one vertex from $\{u_1,u_2,w\}$ and two other vertices  from $U\setminus V(M)$, and no edge with one vertex from $Q$, another vertex from $U\setminus V(M)$ and  the third vertex from $W\setminus V(M)$.
\begin{subclaim}\label{claim180}
For any two distinct edges $e_1$, $e_2$ from $M$, we have $\sum _{u \in Q} |L_{u}(e_1,e_2)| \leq 16$.
\end{subclaim}
\begin{proof} Let $H_1$ be the 3-partite subgraph of $H$ induced on three parts $V,e_1$ and $e_2$. We observe that $H_1$ does not contain a perfect matching. Otherwise, we obtain a larger matching than $M$, a contradiction. Applying Lemma \ref{Yilu} with $k=3$, we obtain that $|E(H_1)| \leq 16$. Therefore  $\sum _{u \in Q} |L_{u}(e_1,e_2)| \leq 16$.
\end{proof}
\begin{subclaim}\label{claim190}
For any $e\in M$,   we have
$\sum _{u\in Q} |L_{u}(e,V(H)\setminus V(M))| \leq \max\{6|U \setminus V(M)|+2,5|U \setminus V(M)|+2|W \setminus V(M)|+2\}.$
\end{subclaim}
\begin{proof} Let $e=\{w',u_2',u_3'\} \in M $ with $w' \in W$. Apply Lemma \ref{Lemma4} with $A=U \setminus V(M)$, $B=W \setminus V(M)$, $G_1 = L_{w'}((U \cup W) \setminus V(M))$ and $G_i = L_{u_i'}((U \cup W) \setminus V(M))$ for $i=2,3$. Recall that $|U \setminus V(M)| \geq 2$ and $|W \setminus V(M)| \geq 1$. Every edge of $G_1$ intersects every edge of $G_j$  containing one vertex in $B$ for $j=2,3$, and every edge of $G_2$ containing one vertex in $B$ intersects every edge of $G_3$  containing one vertex in $B$. Otherwise, we obtain a larger matching than $M$, a contradiction.  By Lemma \ref{Lemma4}, we obtain $\sum _{u\in Q} |L_{u}(e,U \setminus V(M))| \leq \max\{6|U \setminus V(M)|+2,5|U \setminus V(M)|+2|W \setminus V(M)|+2\}.$
\end{proof}
Note that $|L_u( U\setminus V(M),W\setminus V(M))|=0$ for $u \in Q$.  Combining these bounds together, we obtain that
\begin{align*}
\sum_{u \in Q} \text{deg}(u)  \leq & 16\binom{|M|}{2}+7|M| +\sum_{u\in Q}|L_{u}(V(M),V(H) \setminus V(M))|+\sum_{u\in Q}|L_{u}(U \setminus V(M))|.
\end{align*}

Since $n/3 \geq|W| \geq n/3- \tau n$, we have $|U| \geq 2|W| $. Thus  $|U \setminus V(M)| \geq 2|W \setminus V(M)| $.   Recall that $|U \setminus V(M)|=|U|-2|M|$. We obtain
\begin{align*}
\sum_{u \in Q} \text{deg}(u)  & \leq  16\binom{|M|}{2}+7|M| +|M|(6(|U|-2|M|)+2)+2 \binom{|U|-2|M|}{2}\\
& = (2|U|+3)|M|+|U|^2-|U|.
\end{align*}
Since $|M| \leq n-|U|-1$, it follows that
\begin{align*}
\sum_{u \in Q} \text{deg}(u)  & \leq   (2|U|+3)(n-|U|-1)+|U|^2-|U|=-(|U|-n+3)^2+n^2-3n+6.
\end{align*}
Since $|U| \leq \frac{2}{3}n+ \tau n$ and  $n$ is sufficiently large, we obtain
\begin{align*}
\sum_{u \in Q} \text{deg}(u)  & \leq -\Big(\frac{2}{3}n+ \tau n-n+3\Big)^2+n^2-3n+6\\
&\leq  \Big(\frac{8}{9}+ \frac{2}{3} \tau-\tau^2\Big)n^2-(6\tau+1)n-3 \\
& < \Big(\frac{41}{45}+3\gamma-3\tau\Big) n^2.
\end{align*}
It is a contradiction.
\end{proof}

\begin{claim}
$H$ contains a perfect matching.
\end{claim}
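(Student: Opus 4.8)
The plan is to extend the matching from Claim~\ref{lemma24} to a perfect matching of $H$. Write $t:=|W|$; since $(1/3-\tau)n\le t\le n/3$, the integer $m:=n-3t$ satisfies $0\le m\le 3\tau n$ and $m\equiv 0\pmod 3$. First I would greedily build a matching $M_0$ of size $m/3$ inside $H[U]$: a partial matching of size $<m/3\le\tau n$ spans at most $3\tau n$ vertices, while any uncovered $u\in U$ has $\deg_{H[U]}(u)\ge\deg_H(u)-\binom{|U|}{2}>(7/45-O(\tau))n^2$, which dwarfs the at most $3\tau n\cdot|U|$ link-pairs of $u$ that meet the used vertices, so the matching extends. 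Put $U^{*}:=U\setminus V(M_0)$, so $|U^{*}|=|U|-m=2t$. It then suffices to find a perfect matching in $\mathcal H:=H[W\cup U^{*}]$: because $W$ is independent and $|U^{*}|=2|W|$, every edge of such a matching has exactly one vertex in $W$ and two in $U^{*}$, and together with $M_0$ it is a perfect matching of $H$. Using the estimates already in this section one checks that $\deg_{\mathcal H}(x)>(7/45-O(\tau))n^2\ge(7/5-O(\tau))t^2>t^2$ for every $x\in V(\mathcal H)$: for $w\in W$ this is $|L_w|$ minus the at most $m|U|$ pairs meeting $V(M_0)$, and for $u\in U^{*}$ it is $\deg_H(u)-\binom{|U|}{2}$ minus the at most $m|W|$ such pairs.

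Thus the heart of the proof is: a $3$-graph on parts $W$ and $U^{*}$ with $|U^{*}|=2|W|=2t$, every edge having a unique vertex in $W$, and minimum vertex degree larger than $t^2$, has a perfect matching. I plan to prove this by a two-stage Hall argument. Let $\Gamma$ be the graph on $U^{*}$ whose edges are the pairs $ab$ with $\{w,a,b\}\in H$ for some $w\in W$; averaging $\deg_{\mathcal H}(u)$ over $w\in W$ gives $\delta(\Gamma)\ge(7/5-O(\tau))t>|U^{*}|/2$, so by Dirac's theorem $\Gamma$ has a perfect matching $P=\{P_1,\dots,P_t\}$, with much freedom in the choice. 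I want $P$ chosen so that for every $T\subseteq W$ at least $|T|$ of the pairs $P_i$ lie in $\bigcup_{w\in T}L_w$; this is precisely the Hall condition yielding a bijection $\sigma\colon W\to[t]$ with $P_{\sigma(w)}\in L_w$, and then $\{\,\{w\}\cup P_{\sigma(w)}:w\in W\,\}$ is the required perfect matching of $\mathcal H$.

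The choice of $P$ is the step I expect to be hardest. A uniformly random perfect matching of $\Gamma$ need not work, since a vertex of $U^{*}$ may have a positive fraction (up to roughly $0.6t$) of ``non-partners'', i.e.\ pairs lying in no $L_w$, so a bad $P$ can violate the Hall condition for large $T$. The remedy I would pursue is to force a spread property on $P$ — for instance that $P$ meets every $L_w$ in $\Omega(t)$ pairs, which settles small $T$, while the degree bound on the vertices of $U^{*}$ makes $\bigcup_{w\in T}L_w$ dense enough to settle large $T$ — or, alternatively, to construct $P$ and $\sigma$ simultaneously via a single Hall argument in an enlarged bipartite graph. The numerology is tight: the operative density is $7/10$ (equivalently $1.4\,t^2$), only a constant factor above an extremal configuration with no perfect matching, such as pinning every edge to a fixed $(t-1)$-subset of $U^{*}$, so constants must be tracked carefully. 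The remaining items — the greedy construction of $M_0$ and the degree bookkeeping for $\mathcal H$ — are routine given the estimates already established.
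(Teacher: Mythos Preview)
Your reduction to a ``rainbow perfect matching'' problem is natural, but the proof has a genuine gap at precisely the point you flag. After building $M_0$ and passing to $\mathcal H$, what you need is a system of $t$ pairwise disjoint pairs $e_1,\dots,e_t$ in $U^{*}$ with $e_i\in L_{w_i}$; this is a rainbow perfect matching for the graphs $L_{w_1},\dots,L_{w_t}$ on $2t$ vertices. Your two conditions are $|L_{w_i}|>(7/5-o(1))t^2$ and $\sum_i\deg_{L_{w_i}}(u)>(7/5-o(1))t^2$ for every $u$. Neither condition gives a lower bound on $\delta(L_{w_i})$, so the standard ``rainbow Dirac'' results (Joos--Kim and predecessors, which require $\delta(G_i)\ge n/2$) do not apply. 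Your proposed route --- fix a perfect matching $P$ of $\Gamma$ via Dirac, then verify Hall between $W$ and $P$ --- is not clearly salvageable: a single $L_w$ could miss a positive fraction of any particular $P$, and you give no mechanism to choose $P$ so that $|\{i:P_i\in L_w\}|$ is simultaneously large for all $w$, nor do the degree conditions on $U^{*}$ obviously enforce this. In effect, the lemma you are left with is of the same order of difficulty as the original claim (indeed, when $|W|=n/3$ exactly, $M_0=\emptyset$ and $\mathcal H$ is just the relevant part of $H$). A minor aside: the inequality $\deg_{H[U]}(u)\ge\deg_H(u)-\binom{|U|}{2}$ is not the right bookkeeping --- the correct subtraction is $|W|(|U|-1)$ --- though numerically these coincide here.

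The paper avoids all of this by arguing exactly as in Claim~\ref{lemma24}, but from the other side. Take a matching $M$ that contains every vertex of $W$ and is maximum subject to that; split $M$ into $M_1$ (edges meeting $W$) and $M_2=M\setminus M_1$. If $|M|<n/3$, pick three leftover vertices $u_1,u_2,u_3\in U\setminus V(M)$; the lower bound $\deg(u)>(17/45+\gamma-\tau)n^2$ gives $\sum_i\deg(u_i)>(17/15)n^2$. One then bounds $\sum_i |L_{u_i}(e_1,e_2)|\le 18$ for $e_1,e_2\in M$ (Lemma~\ref{lemma1}), $\sum_i|L_{u_i}(e,V(H)\setminus V(M))|\le 6|V(H)\setminus V(M)|$ for $e\in M_1$ (Lemma~\ref{lemmaa2}), and the analogous bound with $3(|V(H)\setminus V(M)|+3)$ for $e\in M_2$ (Lemma~\ref{lemma3}). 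Summing and optimizing over $|M_1|\le|M|\le n/3-1$ yields $\sum_i\deg(u_i)\le n^2-9$, a contradiction. This is a direct counting argument using the intersection lemmas already in hand, and requires no auxiliary existence statement about rainbow matchings.
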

\begin{proof} Let $M$ be a largest matching of $H$ that (i) $M$ contains all the vertices of $W$; (ii)  subject to (i), the size of $M$ is maximum. By Claim \ref{lemma24}, such $M$ does exist. Let $M_1 = \{e \in M: e \cap W \neq \emptyset \}$ and $M_2 = M\setminus M_1$.  Suppose to the contrary that $|M| < n/3$. Let $u_1,u_2,u_3 \in U\setminus V(M)$ and $Q=\{u_1,u_2,u_3\}$.  We have $\sum_{ u \in Q}d(u) \geq \big(\frac{17}{15}+3\gamma-3\tau\big) n^2$.

 Obviously, $H$ contains at most $9|M|$ edges consisting of one vertex from $Q$ and two vertices from the same edge of $M$, and no edge with one vertex from $Q$ and two other vertices  from $V(H)\setminus V(M)$.
\begin{subclaim}\label{claim180}
For any two distinct edges $e_1$, $e_2$ from $M$, we have $\sum _{u \in Q} |L_{u}(e_1,e_2)| \leq 18$.
\end{subclaim}
\begin{proof} Let $H_1$ be the 3-partite subgraph of $H$ induced on three parts $Q,e_1$ and $e_2$. We observe that $H$ does not contain a perfect matching. Otherwise, we obtain a larger matching than $M$, a contradiction. Applying Lemma \ref{lemma1} with $k=3$, we obtain that $|E(H_1)| \leq 18$. Therefore  $\sum _{u \in Q} |L_{u}(e_1,e_2)| \leq 18$.
\end{proof}

\begin{subclaim}\label{claim18}
 For any $e \in M_1$, we have $\sum_{i=1}^3|L_{u_i}(e,V(H)\setminus V(M) )| \leq 6|V(H)\setminus V(M)|$.
\end{subclaim}
\begin{proof} Assume $e =\{u_1', u_2', u_3'\} \in M_1$ with $u_1' \in W$ and $u_2', u_3'\in U$.
For $i=1,2,3$, let $G_i$ be the graph obtained from $L_{u_i'}(V(H)\setminus V(M))$ after adding an isolated vertex $u'$.  Then $|V(G_i)|=|V(H)\setminus V(M)|+1 \geq 4$. By the choice of $M$, every edge of $G_1$ intersects every edge of $G_2$ and $G_3$. The desired inequality thus follows from Lemma \ref{lemmaa2}.
\end{proof}

\begin{subclaim}\label{claim19} For any $e \in M_2$, we have $ \sum_{i=1}^3|L_{u_i}(e,V(H)\setminus V(M) )| \leq 3(|V(H)\setminus V(M) |+3)$.
\end{subclaim}
\begin{proof} Assume $e =\{u_1',u_2',u_3'\} \in M_2$ with $u_1' ,u_2',u_3'\in U$.
For $i=1,2,3$, let $G_i$ be the graph obtained from $L_{u_i'}(V(H)\setminus V(M) )$ after adding two isolated vertices $u'$ and  $u''$.  Then $|V(G_i)|= |V(H)\setminus V(M) |+2 \geq 5$. Since $M$ is maximum, the desired inequality follows from Lemma \ref{lemma3}.
\end{proof}

Note that $|L_{u}(V(H) \setminus V(M))|=0$ for $u \in Q$. Combining these bounds together, we obtain that
\begin{align*}
\sum_{u \in Q} \text{deg}(u)  \leq & 18\binom{|M|}{2}+9|M| +\sum_{u\in Q}|L_{u}(V(M),V(H) \setminus V(M))|+\sum_{u\in Q}|L_{u}(V(H) \setminus V(M))| \\
\leq & 18\binom{|M|}{2}+9|M| +6|M_1||V(H)\setminus V(M)|+3|M_2|(|V(H)\setminus V(M)|+3).
\end{align*}
Since $|M_2|=|M|-|M_1|$ and $|V(H)\setminus V(M)|=n-3|M|$, we obtain
\begin{align*}
\sum_{u \in Q} \text{deg}(u)  \leq &  18\binom{|M|}{2}+9|M| +6|M_1|(n-3|M|)+3(|M|-|M_1|)(n-3|M|+3)\\
= & (-9|M|+3n-9)|M_1|+3|M|n+9|M|.
\end{align*}
Since $|M| \leq n/3-1$ and $|M_1| \leq |M|$, we have
\begin{align*}
\sum_{u \in Q} \text{deg}(u)  \leq &   (-9|M|+3n-9)|M|+3|M|n+9|M| \\
=& -9 \Big(|M|-\frac{1}{3}n\Big)^2+n^2.
\end{align*}
Since $n/3-1 \geq |M| \geq |W|\geq n/3-\tau n$,
\begin{align*}
\sum_{u \in Q} \text{deg}(u)  & \leq    -9 \Big(\frac{1}{3}n-1-\frac{1}{3}n\Big)^2+n^2 =   n^2-9  < \Big(\frac{17}{15}+3\gamma-3\tau\Big) n^2.
\end{align*}
It is a contradiction.
\end{proof}

\section{Proof of Lemma \ref{lemma11182}}

Recall that $1  \gg \tau  \gg \gamma \gg \varepsilon \gg \rho \gg 1/C \gg 1/n_0 > 0.$  Suppose that $H$ is a $3$-graph of order $n$ without isolated vertex and $\sigma_2(H) > (3/5+\gamma)n^2$.
By Lemma \ref{absorbinglemma1}, let $M_1$ be a matching of size less than $ \varepsilon^{13} n$ such that for any vertex set $V' \in V(H)$ of size at most $\varepsilon^{26} n$, and  $|V'| \in 3\mathbb{Z}$, $H[V(M_1) \cup V']$ spans a perfect matching. Let $H'=H-V(M_1)$. It is easy to obtain that
\begin{align}
\sigma_2(H') >\Big(\frac{3}{5}+\gamma\Big)n^2- 2\binom{|V(M_1)|}{2}-2\binom{|V(M_1)|}{1}n > \Big(\frac{3}{5}+\gamma - 3\varepsilon^{13}\Big) n^2.  \label{labelGH123212}
\end{align}
We divide all the vertices of $H'$ into the following four groups:
$A_1=\{ u\in V(H'): \deg_{H'}(u) \leq \big(\frac{4}{15}+\gamma \big)n^2 \}$;  $A_2=\{ u\in V(H'): \big(\frac{4}{15}+\gamma \big)n^2 < \deg_{H'}(u) \leq \big(\frac{3}{10}+\frac{1}{2}\gamma- \frac{3}{2}\varepsilon^{13}\big) n^2\}$; $A_3=\{ u\in V(H'): \big(\frac{3}{10}+\frac{1}{2}\gamma- \frac{3}{2}\varepsilon^{13}\big)n^2 < \deg_{H'}(u) \leq \frac{1}{3}n^2-3\varepsilon^{13} n^2 \}$; $A_4=\{ u\in V(H'): \deg_{H'}(u) > \frac{1}{3}n^2-3\varepsilon^{13} n^2\}$.

Let $U' = A_2 \cup A_3 \cup A_4$ and $W' = A_1$. For any $w \in W'$, $\deg_{H'}(w) \leq \big(4/15+\gamma \big)n^2$.  Clearly no two vertices in $W'$ are adjacent. We further obtain that  $\deg_{H}(w) \leq  \deg_{H'}(w)+\binom{3|M_1|}{1}n \leq \big(3/10+\gamma/2\big)n^2$. Therefore $|W'| \leq |W|\leq n/3-\tau n$.  For any $u \in U\setminus V(M_1)$, $\deg_{H}(u) > \big(3/10+\gamma/2 \big)n^2$. We further obtain that
\begin{align}\label{4}
\deg_{H'}(u) >  \deg_{H}(u)-\binom{3|M_1|}{1}n > \Big(\frac{3}{10}+\frac{1}{2}\gamma-3\varepsilon^{13} \Big)n^2 > \Big(\frac{4}{15}+\gamma \Big)n^2.
\end{align}
Therefore $U\setminus V(M_1) \subseteq U'$. It follows that $|U'| \geq |U|-3|M_1| \geq 2n/3+\tau n-3 \varepsilon^{13}n$ as $|U| \geq 2n/3+\tau n$ and $|M_1|\leq \varepsilon^{13}n$. We further derive that $| U'| \geq 2|W'|$.



First, we show the following claim.
\begin{claim}\label{Claimcoveringalllsmaler}
$H'$ contains a matching that covers all the vertices in $W'$.
\end{claim}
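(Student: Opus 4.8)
The plan is to mimic the extremal-matching argument of Claim \ref{lemma24}: take a largest matching $M$ in $H'$ in which every edge meets $W'$, assume for contradiction that some $w \in W' \setminus V(M)$ remains uncovered, and derive a contradiction by bounding $\deg_{H'}(u_1)+\deg_{H'}(u_2)+\deg_{H'}(w)$ from above for two further uncovered vertices $u_1,u_2 \in U' \setminus V(M)$. Such $u_1,u_2$ exist because $W'$ is small ($|W'| \le n/3 - \tau n$) and $|U'| \ge 2|W'|$, so the maximum matching covering $W'$ leaves many vertices of $U'$ outside. The lower bound is $\sum_{u \in Q}\deg_{H'}(u) \ge (3/5+\gamma - 3\varepsilon^{13})n^2 + \deg_{H'}(w)$ using \eqref{labelGH123212}, and since $w \in W' = A_1$ is not isolated in $H$ it is adjacent to some $u \in U'$, which forces $\deg_{H'}(w)$ to be reasonably large (roughly at least $(3/5+\gamma-3\varepsilon^{13})n^2 - (4/15+\gamma)n^2$ after subtracting an upper bound on $\deg_{H'}(u)$, analogous to the estimate at the start of Section 5); this gives a lower bound comfortably exceeding $\tfrac{41}{45}n^2$-type quantities.

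For the upper bound, write $\deg_{H'}(u) = |L_u(V(M),V(H')\setminus V(M))| + \sum_{\{e_1,e_2\}\in\binom{M}{2}}|L_u(e_1,e_2)| + (\text{edges inside one } e\in M) + |L_u(V(H')\setminus V(M))|$, summed over $u \in Q = \{u_1,u_2,w\}$. The pieces are controlled exactly as in Claim \ref{lemma24}: at most $7|M|$ edges inside single matching edges; for each pair $e_1,e_2 \in M$ the $3$-partite $3$-graph on $(V(H'),e_1,e_2)$ has no perfect matching (else $M$ was not largest) so Lemma \ref{Yilu} with $k=3$ gives $\sum_{u\in Q}|L_u(e_1,e_2)| \le 16$; for each $e \in M$ the link quantity $\sum_{u\in Q}|L_u(e,V(H')\setminus V(M))|$ is bounded via Lemma \ref{Lemma4} with $A = U'\setminus V(M)$, $B = W'\setminus V(M)$ (noting $|U'\setminus V(M)| \ge 2$, $|W'\setminus V(M)|\ge 1$, and the required intersection conditions hold by maximality of $M$) by $\max\{6|U'\setminus V(M)|+2,\ 5|U'\setminus V(M)|+2|W'\setminus V(M)|+2\}$; and $|L_u(U'\setminus V(M), W'\setminus V(M))| = 0$ since no two vertices of $W'$ are adjacent and each such edge would extend $M$. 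Using $|U'\setminus V(M)| \ge 2|W'\setminus V(M)|$ the max is the first term, so $\sum_{u\in Q}\deg_{H'}(u) \le 16\binom{|M|}{2} + 7|M| + |M|(6(|U'|-2|M|)+2) + 2\binom{|U'|-2|M|}{2}$, which simplifies (exactly as in Claim \ref{lemma24}, with $|U'|$ in place of $|U|$) to $(2|U'|+3)|M| + |U'|^2 - |U'|$. Since $|M| \le n - |U'| - 1$ this is at most $-(|U'|-n+3)^2 + n^2 - 3n + 6$, and using $|U'| \ge 2n/3 + \tau n - 3\varepsilon^{13}n$ (so that $n - |U'| \le n/3 - \tau n + 3\varepsilon^{13}n$) this is at most roughly $(\tfrac{8}{9} + O(\tau))n^2$, which is strictly smaller than the lower bound — a contradiction.

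The main obstacle I anticipate is pinning down the lower bound on $\deg_{H'}(w) + \deg_{H'}(u_1) + \deg_{H'}(u_2)$ precisely enough. Unlike Section 5, here we cannot assume $|W'|$ is within $\tau n$ of $n/3$, so the vertices of $U'$ need not have the enhanced degree bound $(\tfrac{17}{45}+\gamma-\tau)n^2$; we only know $\deg_{H'}(u_i) > (\tfrac{4}{15}+\gamma)n^2$ for $u_i \in U'$. One must check that $\deg_{H'}(w) + 2 \cdot (\tfrac{4}{15}+\gamma)n^2 \ge$ (something provably $> (\tfrac{8}{9}+O(\tau))n^2$); since $w$ is adjacent to some $u \in U'$ we get $\deg_{H'}(w) \ge (\tfrac{3}{5}+\gamma-3\varepsilon^{13})n^2 - \deg_{H'}(u)$, but $\deg_{H'}(u)$ could a priori be as large as $(\tfrac13 - 3\varepsilon^{13})n^2$, giving only $\deg_{H'}(w) \gtrsim (\tfrac{4}{15}+\gamma)n^2$ and hence $\sum_{u\in Q}\deg_{H'}(u) \gtrsim (\tfrac{14}{15})n^2$, which still beats $\tfrac{8}{9}n^2$ — so the margin is there, but the bookkeeping with the $\varepsilon^{13}$ and $\tau$ error terms is where care is needed. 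If that margin turns out too tight, the fallback is to also use Subclaim-style bounds that exploit $w \in A_1$ having small degree on one side, but I expect the crude estimate above to suffice.
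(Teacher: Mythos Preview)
Your argument has a genuine gap at the final step of the upper bound. The expression
\[
-(|U'|-n+3)^2 + n^2 - 3n + 6
\]
is an \emph{increasing} function of $|U'|$ on $[0,n-3]$. In Claim~\ref{lemma24} the analogous bound works because there one has the \emph{upper} bound $|U| \le \tfrac{2}{3}n+\tau n$ (coming from $|W| \ge (\tfrac13-\tau)n$). In the present setting of Lemma~\ref{lemma11182} you only have the \emph{lower} bound $|U'|\ge \tfrac{2}{3}n+\tau n - 3\varepsilon^{13}n$; nothing prevents $|W'|$ from being tiny and $|U'|$ from being close to $n$, in which case the displayed expression is essentially $n^2$, not $(\tfrac{8}{9}+O(\tau))n^2$. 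So your upper bound does not follow.

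Your lower bound also does not stand as written. You assert $\deg_{H'}(u)\le (\tfrac13-3\varepsilon^{13})n^2$ for a neighbour $u$ of $w$, but a vertex in $A_4$ can have degree up to roughly $\tfrac12 n^2$; with small $|W'|$ this drops $\deg_{H'}(w)$ to about $(\tfrac{1}{10}+\gamma)n^2$, and combined with the bare $(\tfrac{4}{15}+\gamma)n^2$ bound for $u_1,u_2\in U'$ you only get $\sum_{u\in Q}\deg_{H'}(u)\gtrsim (\tfrac{19}{30})n^2$, far below what you need. Both problems have the same root: the argument of Claim~\ref{lemma24} exploits that $|W|$ is close to $n/3$, which is precisely the hypothesis that fails here. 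The paper's proof handles this by a case split on whether $|W'|\le n/\sqrt{15}$ or $|W'|>n/\sqrt{15}$ (the latter providing the needed upper bound on $|U'|$, the former using a direct two-vertex $\sigma_2$ estimate), together with a further case on whether the uncovered $w$ has at least two uncovered neighbours in $U'$ (where a weighted sum $2\deg(w)+\deg(u_1)+\deg(u_2)$ and Lemmas~\ref{lemma120645778} and~\ref{Lemma4444444444} are used). You will need some version of this case analysis; the single-shot imitation of Claim~\ref{lemma24} cannot close.
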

\begin{proof}
Let $M$ be a largest matching that (i) each edge $e \in M$ contains one vertex of $W'$; (ii) subject to (i), $\max_{w\in W'\setminus V(M)} |\{u \in U' \setminus V(M): \text{$u$ and $w$ are adjacent} \}|$ is maximized. We assume, to the contrary, that $W'\setminus V(M) \neq \emptyset$. Let $W'_1= W'\setminus V(M)$ and $U'_1= U'\setminus V(M)$.
We distinguish the following two cases:\medskip

{\noindent  \medskip \bf Case 1.} Every vertex of $W'_1$ is adjacent to at most one vertex of $U'_1$.

We further distinguish the following two subcases:\medskip

{\noindent \bf Case 1.1} $|W'| \leq \frac{n}{\sqrt{15}}$.

Fix $w \in W'_1$. Since $w$ is adjacent to at most one vertex of $U'_1$, we have $\deg_{H'}(w) \leq \binom{|V(M) \cap U'|+1}{2}=\binom{2|M|+1}{2}$. Since $w$ is not an isolated vertex, there exists a vertex $u\in U'\cap V(M)$ that is adjacent to $w$. Clearly
$\deg_{H'}(u)\leq \binom{|U'|-1}{2}+(|U'|-1)|W'|$.
Since $|M| \leq |W'|-1$ and $|U'|\leq n - |W'|$, it follows that
\begin{align*}
\deg_{H'}(w)+\deg_{H'}(u) &\leq \binom{2(|W'|-1)+1}{2}+\binom{n-|W'|-1}{2}+(n-|W'|-1)|W' | \\
&\leq \frac{3}{2}\left(|W'|-\frac{5}{6}\right)^2+\frac{1}{2}n^2-\frac{3}{2}n+\frac{23}{24}.
\end{align*}
Furthermore,  since $|W'| \leq  \frac{n}{\sqrt{15}}$,  we derive that
\begin{align*}
\deg_{H'}(w)+\deg_{H'}(u)  & \leq \frac{3}{2}\left( \frac{n}{\sqrt{15}}-\frac{5}{6}\right)^2+\frac{1}{2}n^2-\frac{3}{2}n+\frac{23}{24}\\
& = \frac{3}{5}n^2-\Big(\frac{3}{2}+\frac{\sqrt{15}}{6}\Big)n+2 < \Big(\frac{3}{5}+\gamma - 3\varepsilon^{13}\Big) n^2,
\end{align*}
contradicting (\ref{labelGH123212}) as $\gamma \gg \varepsilon$.
\medskip

{\noindent \bf Case 1.2} $|W'| > \frac{n}{\sqrt{15}}$.\medskip

Recall that $U\setminus V(M_1) \subseteq U'$, $|U\setminus V(M_1)| \geq 2n/3+\tau n-3 \varepsilon^{13}n$ and $|W'| \leq n/3-\tau n$. Since $|M| < |W'|$,   we obtain that $|U\setminus (V(M_1) \cup V(M))| \geq 3\tau n-3 \varepsilon^{13}n > 2$. We claim that  every vertex $u$ of $U\setminus (V(M_1) \cup V(M))$ is adjacent to some vertex of $W'$ in $H'$; Otherwise $\deg_{H'}(u) \leq \binom{|U'|-1}{2}\leq \binom{n-|W'|-1}{2}$. Since $|W'| > n/\sqrt{15}$ and $n$ is sufficiently large,
\begin{align*}
\deg_{H'}(u) \leq \binom{n-n/\sqrt{15}-1}{2} < \Big(\frac{8}{15}-\frac{1}{15}\sqrt{15}\Big)n^2  < \Big(\frac{3}{10}+\frac{1}{2}\gamma-3\varepsilon^{13} \Big)n^2,
\end{align*}
which contradicts (\ref{4}). Combining (\ref{labelGH123212}) and the degree condition of the vertices in $W'$, we obtain that
\begin{align*}
\deg_{H'}(u) &\geq \Big(\frac{3}{5}+\gamma - 3\varepsilon^{13}\Big) n^2-\Big(\frac{4}{15}+\gamma \Big)n^2  = \frac{1}{3}n^2 -3\varepsilon^{13} n^2,
\end{align*}
for every $u\in U\setminus (V(M_1) \cup V(M))$.

If no vertex in $W'_1$ is adjacent to a vertex in $U'_1$, then let $u_1 \in e^*=\{u_1,u_2^*,w^*\} \in M$ be an adjacent vertex of $w\in W'_1$. Choose $u_2 \in  U\setminus \big(V(M_1) \cup V(M)\big)$ and let $Q=\{u_1,u_2,w\}$. If some vertex $w\in W'_1$ is adjacent to a vertex $u_1 \in U'_1$, then let  $u_2 \in  U\setminus \big(V(M_1) \cup V(M)\cup \{u_1\}\big)$, $e^*=\emptyset$ and $Q=\{u_1,u_2,w\}$.
 Then we have
\begin{align}\label{5}
\sum_{u \in Q} \text{deg}_{H'}(u)   >  \Big(\frac{3}{5}+\gamma - 3\varepsilon^{13}\Big) n^2+ \frac{1}{3}n^2 -3\varepsilon^{13} n^2= \Big(\frac{14}{15}+\gamma - 6\varepsilon^{13}\Big) n^2.
\end{align}

 Obviously, $H$ contains at most $7|M|$ edges consisting of one vertex from $Q$ and two vertices from the same edge of $M$, and at most $2 \binom{|U'\setminus V(M)|}{2}$ edges with one vertex from $Q$ and two other vertices  from $U'\setminus V(M)$.

\begin{subclaim}\label{claim180}
 For any two distinct edges $e_1$, $e_2$ from $M\setminus \{e^*\}$, we have $\sum _{u \in Q} |L_{u}(e_1,e_2)| \leq 16$.
\end{subclaim}
\begin{proof} Let $H_1$ be the 3-partite subgraph of $H$ induced on three parts $Q,e_1$ and $e_2$. We observe that $H_1$ does not contain a perfect matching. Otherwise, if $u_1 \in U'_1$, then we obtain a larger matching than $M$, a contradiction to (i); if  $u_1 \notin U'_1$, then we obtain a matching $M'$ the same size to $M$ but $w^* \in W'\setminus V(M')$ is adjacent to $u_2^* \in U' \setminus V(M')$,  a contradiction  to (ii). Applying Lemma \ref{Yilu} with $k=3$, we obtain that $|E(H_1)| \leq 16$. Therefore  $\sum _{u \in Q} |L_{u}(e_1,e_2)| \leq 16$.
\end{proof}

\begin{subclaim}\label{claim190}
For any $e\in M\setminus \{e^*\}$,   we have
$\sum _{u\in Q} |L_{u}(e,V(H')\setminus V(M))| \leq \max\{6|U' \setminus V(M)|+2,5|U' \setminus V(M)|+2|W' \setminus V(M)|+2\}.$
\end{subclaim}
\begin{proof} Let $e=\{w', u_2',u_3'\} \in M\setminus \{e^*\}$ with $w' \in W'$. Apply Lemma \ref{Lemma4} with $A=U' \setminus V(M)$, $B=W' \setminus V(M)$, $G_1 = L_{w'}(V(H') \setminus V(M))$ and $G_i = L_{u_i'}(V(H') \setminus V(M))$ for $i=2,3$. Recall that $|U' \setminus V(M)| \geq 2$ and $|W' \setminus V(M)| \geq 1$. Every edge of $G_1$ intersects every edge of $G_j$  containing one vertex in $B$ for $j=2,3$, and every edge of $G_2$ containing one vertex in $B$ intersects every edge of $G_3$  containing one vertex in $B$. Otherwise, if $u_1 \in U'_1$, then we obtain a larger matching than $M$, a contradiction to (i); if  $u_1 \notin U'_1$, then we obtain a matching $M'$ the same size to $M$ but $w^* \in W'\setminus V(M')$ is adjacent to $u_2^* \in U' \setminus V(M')$,  a contradiction to (ii). By Lemma \ref{Lemma4}, we obtain $\sum _{u\in Q} |L_{u}(e,V(H') \setminus V(M))| \leq \max\{6|U' \setminus V(M)|+2,5|U' \setminus V(M)|+2|W' \setminus V(M)|+2\}.$
\end{proof}

Combining these bounds together, we obtain that
\begin{align*}
\sum_{u \in Q} \text{deg}_{H'}(u)  \leq & 16\binom{|M|}{2}+7|M| +\sum_{u\in Q}|L_{u}(V(M),V(H') \setminus V(M))|+\sum_{u\in Q}|L_{u}(U' \setminus V(M))|\\
& +\sum_{u\in Q}|L_{u}(e^*, V(H'))|.
\end{align*}
Recall that $|U'| \geq 2|W'|$. Therefore  $|U'|-2|M| \geq 2(|W'|-|M|)$. We further derive that
\begin{align*}
\sum_{u \in Q} \text{deg}_{H'}(u)  & \leq  16\binom{|M|}{2}+7|M| +|M|(6(|U'|-2|M|)+2)+2 \binom{|U'|-2|M|}{2}+9n\\
& = (2|U'|+3)|M|+|U'|^2-|U'|+9n.
\end{align*}
Since $|M| \leq n-|U'|-1$, it follows that
\begin{align*}
\sum_{u \in Q} \text{deg}_{H'}(u)  & \leq   (2|U'|+3)(n-|U'|-1)+|U'|^2-|U'|=-(|U'|-n+3)^2+n^2+6n+6.
\end{align*}
Since $|W'| > \frac{n}{\sqrt{15}}$ then $|U'| < \Big(1-\frac{1}{\sqrt{15}}\Big)n$. We obtain
\begin{align*}
\sum_{u \in Q} \text{deg}_{H'}(u)  & \leq -\Big(-\frac{n}{\sqrt{15}}+3\Big)^2+n^2+6n+6   < \frac{14}{15}n^2+8n  < \Big(\frac{14}{15}+\gamma - 6\varepsilon^{13}\Big) n^2,
\end{align*}
which contradicts (\ref{5}) as $\gamma \gg \varepsilon$.\medskip

{\noindent \medskip \bf Case 2.} Some vertex of $W'_1$ is adjacent to at least two vertices of $U'_1$.

Let $w\in W'_1$ be adjacent to $u_1,u_2 \in U'_1$, where $u_1 \neq u_2$. Clearly $\deg(w)+\deg(u_1) > \Big(\frac{3}{5}+\gamma - 3\varepsilon^{13}\Big) n^2$, $\deg(w)+\deg(u_2) > \Big(\frac{3}{5}+\gamma - 3\varepsilon^{13}\Big) n^2$. It follows that  $2\deg(w)+\deg(u_1)+\deg(u_2) > 2 \Big(\frac{3}{5}+\gamma - 3\varepsilon^{13}\Big) n^2$.

Obviously, $H$ contains at most $|M|$ edges consisting of $w$ and two vertices from the same edge of $M$, no edge consisting of $w$ and two vertices from $U'\setminus V(M)$,  at most $6|M|$ edges consisting of one vertex $\{u_1,u_2\}$ and two vertices from the same edge of $M$, and at most $ 2\binom{|U'\setminus V(M)|}{2}$ edges with one vertex from $\{u_1, u_2\}$ and two other vertices  from $U'\setminus V(M)$.

\begin{subclaim}\label{claim180}
For any two distinct edges $e_1$, $e_2$ from $M$, we have $2|L_{w}(e_1,e_2)|+|L_{u_1}(e_1,e_2)|+|L_{u_2}(e_1,e_2)| \leq 20$.
\end{subclaim}
\begin{proof} Let $H_1$ be the 3-partite subgraph of $H$ induced on three parts $\{u_1,u_2,w\},e_1$ and $e_2$. We observe that $H$ does not contain a perfect matching. Otherwise, let $M^*$ be a perfect matching of $H_1$. We know that each part of $\{u_1,u_2,w\}$, $e_1,$ $e_2$ contains exactly one vertex from $W'$,  therefore $\big(M\setminus\{e_1,e_2\}\big) \cup M^*$ is a matching larger than $M$, a contradiction to (i). Applying Lemma \ref{lemma120645778}, we obtain that  $2|L_{w}(e_1,e_2)|+|L_{u_1}(e_1,e_2)|+|L_{u_2}(e_1,e_2)| \leq 20$.
\end{proof}
\begin{subclaim}\label{claim190}
For any $e\in M$,   we have
$|L_{u_1}(e,U_1'\cup W_1')|+|L_{u_2}(e,U_1'\cup W_1')|+2|L_{w}(e,U_1'\cup W_1')| \leq \max\{8|U_1'|+2,6|U_1'|+2|W_1'|+4\}.$
\end{subclaim}
\begin{proof} Let $e=\{v_1',v_2',v_3'\} \in M$ with $v_1' \in W'$. Apply Lemma \ref{Lemma4444444444} with $A=U_1'$, $B=W_1'$, and $G_i = L_{v_i'}(U_1'\cup W_1')$ for $i=1,2,3$. Recall that $|U_1'| \geq 2$ and $|W_1'| \geq 1$. Every edge of $G_1$ intersects every edge of $G_j$  containing one vertex in $B$ for $j=2,3$, and every edge of $G_2$ containing one vertex in $B$ intersects every edge of $G_3$  containing one vertex in $B$. Otherwise, letting $e_1$ and $e_2$ be two such disjoint edges.  Then $(M\setminus\{e\}) \cup \{e_1,e_2\}$ is a larger matching than $M$, a contradiction to (i).
By Lemma \ref{Lemma4444444444}, we obtain $|L_{u_1}(e,U_1'\cup W_1')|+|L_{u_2}(e,U_1'\cup W_1')|+2|L_{w}(e,U_1'\cup W_1')| \leq \max\{8|U_1'|+2,6|U_1'|+2|W_1'|+4\}.$
\end{proof}

Combining these bounds together, we obtain that
\begin{align*}
2\deg(w)+\deg(u_1)+\deg(u_2)  \leq & 20\binom{|M|}{2}+8|M|+|L_{u_1}(U' \setminus V(M))|+|L_{u_2}(U' \setminus V(M))| \\
&+|L_{u_1}(V(M),V(H') \setminus V(M))|+|L_{u_2}(V(M),V(H') \setminus V(M))|\\
&+2|L_{w}(V(M),V(H') \setminus V(M))|.
\end{align*}

Recall that $|U'| \geq 2|W'|$. Therefore  $|U'|-2|M| \geq 2(|W'|-|M|)\geq2$. We further derive that
\begin{align*}
2\deg(w)+\deg(u_1)+\deg(u_2)  & \leq  20\binom{|M|}{2}+8|M| +|M|(8(|U'|-2|M|)+2)+2 \binom{|U'|-2|M|}{2}\\
&=|U'|^2+(4|M|-1)|U'|-2|M|^2+2|M|.
\end{align*}
Note that the quadratic function $ x^2+(4|M|-1)x$ is minimized at $x= -2|M|+\frac{1}{2}$.  Since $|U'| \leq n-|W'|$, it follows that this function is maximized at $|U'|= n-|W'|$.
\begin{align*}
2\deg(w)+\deg(u_1)+\deg(u_2)  & \leq  (n-|W'|)^2+(4|M|-1)(n-|W'|)-2|M|^2+2|M|.\\
& = -2|M|^2+(4n-4|W'|+2)|M|+n^2-2n|W'|+|W'|^2-n+|W'|.
\end{align*}
Note that the quadratic function $  -2x^2+(4n-4|W'|+2)x$ is maximized at $x= n-|W'|+\frac{1}{2}$. Since $|M| \leq |W'|-1 \leq n-|W'|+\frac{1}{2}$, it follows that
\begin{align*}
  2\deg(w)+\deg(u_1)+\deg(u_2) \leq   & -2(|W'|-1)^2+(4n-4|W'|+2)(|W'|-1)\\
  & +n^2-2n|W'|+|W'|^2-n+|W'|\\
  = & -5|W'|^2+(2n+11)|W'|+n^2-5n-4.
\end{align*}
Note that the quadratic function $  -5x^2+(2n+11)x$ is maximized at $x= \frac{1}{5}n+\frac{11}{10}$. Let $|W'|= \frac{1}{5}n+\frac{11}{10}$, then we  obtain that
\begin{align*}
  2\deg(w)+\deg(u_1)+\deg(u_2) \leq   \frac{6}{5}n^2-\frac{14}{5}n+\frac{41}{20} < 2 \Big(\frac{3}{5}+\gamma - 3\varepsilon^{13}\Big) n^2.
\end{align*}
This is a contradiction. This finishes the proof of Claim \ref{Claimcoveringalllsmaler}. \end{proof}

We continue to prove Lemma \ref{lemma11182}. Let $M_2$ be a matching satisfying the following three conditions: (1) $M_2$ covers all the vertices of $A_1$; (2) Subject to (1), the matching $M_2$ is as large as possible; (3) Subject to (1) and (2), the size of $A_3 \cup A_4$ is as large as possible.  By Claim \ref{Claimcoveringalllsmaler}, such $M_2$ indeed exists. Let $M_{21}=\{e\in M_2: e\cap A_1 \neq \emptyset\}$ and  $M_{22} = M_2 \setminus M_{21}$.
It suffices to show that $|V(H') \setminus V(M_2)| \leq C$.  Indeed, if this is true, the absorbing matching $M_1$ can absorb all the vertices of $V(H') \setminus V(M_2)$ to form a matching $M_3$, then $M_2 \cup M_3$ forms a perfect matching in $H$. We have the following claims:
\begin{claim}
$|A_2\setminus V(M_2)| \leq C/3.$
\end{claim}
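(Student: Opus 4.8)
The plan is to argue by contradiction, assuming $|A_2 \setminus V(M_2)| > C/3$, and to extract from $A_2 \setminus V(M_2)$ a triple of vertices whose degree sum is too large to be compatible with the structure forced by the maximality of $M_2$. First I would record the relevant degree bounds: by definition every $u \in A_2 \setminus V(M_2) \subseteq A_2$ satisfies $\deg_{H'}(u) > (\tfrac{4}{15}+\gamma)n^2$, so picking any three such vertices $u_1,u_2,u_3$ and setting $Q = \{u_1,u_2,u_3\}$ gives $\sum_{u\in Q}\deg_{H'}(u) > 3(\tfrac{4}{15}+\gamma)n^2 = (\tfrac{4}{5}+3\gamma)n^2$. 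Since these vertices are all in $A_2$ (not in $A_1 = W'$), they behave like vertices of $U'$, and in particular $Q \cap W' = \emptyset$; this is what lets us use the link-intersection lemmas on edges of $M_2$ in the same way as in the proof of Claim \ref{Claimcoveringalllsmaler}.

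Next I would bound $\sum_{u\in Q}\deg_{H'}(u)$ from above by partitioning each link into contributions from edges of $M_2$ and from $V(H')\setminus V(M_2)$. For two distinct edges $e_1,e_2 \in M_2$, the $3$-partite subgraph on $Q,e_1,e_2$ has no perfect matching — otherwise swapping in that matching produces a matching still covering $A_1$ but strictly larger, contradicting condition (2) of the choice of $M_2$ — so Lemma \ref{lemma1} (or Lemma \ref{Yilu}, depending on the partite structure) bounds $\sum_{u\in Q}|L_u(e_1,e_2)|$ by a constant. For a single edge $e\in M_2$, the contribution $\sum_{u\in Q}|L_u(e, V(H')\setminus V(M_2))|$ is controlled: if $e \in M_{21}$ one uses Lemma \ref{lemmaa2} (one vertex of $e$ lies in $A_1$ and is isolated in the relevant link, exactly as in Subclaim \ref{claim18}), giving a bound of $6|V(H')\setminus V(M_2)|$; if $e \in M_{22}$ one uses Lemma \ref{lemma3} as in Subclaim \ref{claim19}, giving $3(|V(H')\setminus V(M_2)|+3)$. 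There are no edges from $Q$ into $V(H')\setminus V(M_2)$ entirely, and there are at most $9|M_2|$ edges from $Q$ meeting a single edge of $M_2$ in two vertices. Here I would need to be a little careful about the case analysis hidden in condition (3): when an edge of $M_2$ is replaceable without changing $|M_2|$ but in a way that increases $|A_3\cup A_4|$, that replacement is also forbidden, which is exactly the extra leverage that rules out certain configurations of the $3$-partite subgraph on $Q,e_1,e_2$ — this mirrors the $e^*$ bookkeeping in Case 1.2 of the previous claim.

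Assembling these estimates gives, with $m := |M_2|$ and writing $m_{21} = |M_{21}|$, $m_{22}=m-m_{21}$, and $n-3m = |V(H')\setminus V(M_2)|$,
\begin{align*}
\sum_{u\in Q}\deg_{H'}(u) \leq 18\binom{m}{2} + 9m + 6m_{21}(n-3m) + 3m_{22}(n-3m+3).
\end{align*}
Since $m_{21}\le m$ and $m \le n/3$, this right-hand side is at most a quadratic in $m$ that is maximized at $m = n/3$, yielding a bound of the form $n^2 + O(n)$, i.e. at most $(1+o(1))n^2$ — but in fact I expect the more delicate point is that $|V(H')\setminus V(M_2)| > C$ would be the situation we are trying to exclude, so for the purposes of this claim we may assume $n-3m$ is already small relative to $n$ only after later claims; thus here I should instead use the crude bound $n-3m \le n$ and still obtain $\sum_{u\in Q}\deg_{H'}(u) \le (\tfrac45 + o(1))n^2 < (\tfrac45 + 3\gamma)n^2$ for $n$ large, contradicting the lower bound. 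The main obstacle is therefore the same one that recurs throughout the paper: getting the constants in the upper bound to beat $\tfrac45$ requires exploiting condition (3) (the maximality of $|A_3\cup A_4|$) to rule out the worst-case intersection patterns on pairs of edges of $M_2$, rather than just condition (2); making that structural dichotomy precise — and checking it is consistent with the "$e^*$-type" exceptional edge that may arise — is where the real work lies. Once the contradiction is reached, we conclude $|A_2\setminus V(M_2)| \le C/3$, completing the proof of the claim. \q
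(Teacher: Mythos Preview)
Your outline has a genuine gap, and in fact the numerical claim you make is incorrect: with the bound $\sum_{u\in Q}|L_u(e_1,e_2)|\le 18$ for \emph{every} pair $e_1,e_2\in M_2$, the expression $18\binom{m}{2}+9m+6m_{21}(n-3m)+3m_{22}(n-3m+3)$ evaluates to $n^2+O(n)$ at $m\approx n/3$ (try $m_{21}=m$ or $m_{21}=0$), not to $(\tfrac45+o(1))n^2$. So the contradiction with $\sum_{u\in Q}\deg_{H'}(u)>(\tfrac45+3\gamma)n^2$ never materialises from the ingredients you list. You correctly sense that condition~(3) must be used, but you do not identify the mechanism.

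The paper supplies two ideas you are missing. First, any vertex in $A_2$ is non-adjacent to every vertex of $A_1\cup A_2$ (their degree sum would fall below the $\sigma_2$ threshold), so if $E,F$ each contain a vertex of $A_1\cup A_2$ then $|L_u(E,F)|\le 4$ for $u\in A_2$, giving $12$ rather than $18$ on those pairs. To exploit this one splits $M_{22}$ into $M_{22}'=\{e\in M_{22}:e\cap A_2\neq\emptyset\}$ and $M_{22}''=M_{22}\setminus M_{22}'$; pairs inside $M_{21}\cup M_{22}'$ get the $12$ bound, and pairs of type $M_{21}\times M_{22}''$ get $18$. Second, for pairs touching $M_{22}''$ one cannot lower $18$ (or $27$) uniformly, so instead one shows that only $O(C)$ vertices of $A_2\setminus V(M_2)$ can have $\ge\rho n^2$ pairs $E_1E_2$ (with $E_1\in M_{22}'$, $E_2\in M_{22}''$, resp.\ both in $M_{22}''$) satisfying $|L_u(E_1E_2)|\ge 4$. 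This is a bipartite double-counting argument: if too many such vertices exist, some pair $E_1E_2$ is ``dense'' for two of them, and the two dense links contain a rainbow $2$-matching whose swap preserves $|M_2|$ and coverage of $A_1$ but replaces two $A_2$-vertices outside $M_2$ by vertices of $E_1\cup E_2$, at least one of which lies in $A_3\cup A_4$ --- contradicting~(3). Discarding these $\le 2C/9$ exceptional vertices, one picks the triple $Q$ from the remainder and now the pair contributions are $12\binom{|M_{21}\cup M_{22}'|}{2}+9\binom{|M_{22}''|}{2}+18|M_{21}||M_{22}''|+9|M_{22}'||M_{22}''|+O(\rho n^2)$; optimising over $|M_{21}|,|M_{22}'|$ yields at most $(\tfrac45+o(1))n^2$, the desired contradiction.
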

\begin{proof}
Let $M_{22}'=\{e \in M_{22}: e \cap A_2 \neq \emptyset \}$ and $M_{22}''= M_{22}\setminus M_{22}'$. Clearly  a vertex in $A_2$ is not adjacent to any vertex in $A_1 \cup A_2$.   Let $u\in A_2\setminus V(M_2)$. For any $EF \in \binom{M_{21}\cup M_{22}'}{2}$, we have $|L_{u}(EF)| \leq 4$; For any $E \in M_{21},F \in M_{22}''$, we have $|L_{u}(EF)| \leq 6$.
Let $$A_2' =\Big\{u \in A_2\setminus V(M_2):  \Big|\Big\{\{E_1E_2\}: \big|L_u(E_1E_2)\big| \geq 4, E_1 \in M_{22}',E_2 \in M_{22}'' \Big\}\Big| \geq \rho n^2  \Big\}.$$
We claim that $|A_2'| \leq C/9.$ Otherwise let $G$ be the bipartite graph with vertex classes $A_2'$ and $\Big\{\{E_1E_2\}: E_1 \in M_{22}',E_2 \in M_{22}''\Big\}$ where $\{u,\{E_1E_2\}\}$ is an edge if and only if $\Big|L_u(E_1E_2)\Big| \geq 4$. Clearly $G$ contains at least $|A_2'|\rho n^2$ edges. Then $G$ contains a vertex $E_1E_2$ such that $d_G(E_1E_2) \geq C \rho/9 >2$. Suppose $u_1,u_2\in A_2'$ such that $|L_{u_1}(E_1E_2)| \geq 4$, $|L_{u_2}(E_1E_2)| \geq 4$. Clearly $L_{u_1}(E_1E_2)$ and $L_{u_2}(E_1E_2)$ have a rainbow 2-matching.  Let $E_1=\{v_1,v_2,v_3\}$ and $E_2=\{v_4,v_5,v_6\}$.  Suppose that $E_1'=\{u_1, v_1,v_4\}$ and $E_2'=\{u_2, v_2,v_5\}$ are a rainbow 2-matching. Then $(M_2\setminus \{E_1,E_2\}) \cup \{E_1',E_2'\}$ is a matching of the same size to $M_2$ and the two vertices $u_1,u_2 \in A_2'$ are replaced by $v_3,v_6$, where $v_6 \in A_3 \cup A_4$, a contradiction to (3).
Let $$A_2'' =\Big\{u \in A_2\setminus V(M_2):  \Big|\Big\{\{E_1E_2\}: |L_u(E_1E_2)| \geq 4, E_1,E_2  \in M_{22}'' \Big\}\Big| \geq \rho n^2  \Big\}.$$
We can obtain $|A_2''| \leq C/9$ similarly. Let $A_2'''=(A_2 \setminus V(M_2))\setminus (A_2' \cup A_2'').$ We prove that $|A_2'''|  \leq C/9$. To the contrary, let $Q=\{u_1,u_2,u_3\} \subseteq A_2'''$. Then $\sum_{u \in Q} \text{deg}_{H'}(u)  > \big(4/5+3\gamma \big)n^2$.

By the same argument of Claim \ref{claim18},  we can obtain the following claim.
\begin{subclaim}\label{claim188888}
 For any $e \in M_{21}$, we have $\sum_{i=1}^3|L_{u_i}(e,V(H')\setminus V(M_2) )| \leq 6|V(H')\setminus V(M_2)|$.
\end{subclaim}

By the same argument of Claim \ref{claim19}, we can obtain the following claim.
\begin{subclaim}\label{claim1999999} For any $e \in M_{22}$, we have $ \sum_{i=1}^3|L_{u_i}(e,V(H')\setminus V(M_2) )| \leq 3(|V(H')\setminus V(M_2)|+3)$.
\end{subclaim}
Note that Subclaims \ref{claim188888} and \ref{claim1999999} also hold when $u_1,u_2,u_3 \in A_3 \cup A_4$.
Combining these bounds together, we obtain that
\begin{align*}
\sum_{i=1}^{3} \text{deg}_{H'}(u_i)  \leq & 12\binom{|M_{21}\cup M_{22}'|}{2}+9\binom{|M_{22}''|}{2}
+18\binom{|M_{21}|}{1}\binom{|M_{22}''|}{1}+9\binom{|M_{22}'|}{1}\binom{|M_{22}''|}{1}+81\rho n^2 \\
&+9|M_{2}|+\sum_{i=1}^3|L_{u_i}(V(M_{21}\cup M_{22}),V(H')\setminus V(M_2))|\\
\leq & 3\binom{|M_{21}\cup M_{22}'|}{2}+9\binom{|M_{2}|}{2}
+9\binom{|M_{21}|}{1}\binom{|M_{22}''|}{1}+81\rho n^2 \\
&+9|M_{2}|+6|M_{21}|(n-3|M_2|)+3|M_{22}|(n-3|M_2|+3).
\end{align*}
Since $|M_{22}''|=|M_2|-|M_{21}|-|M_{22}'|$ and $|M_{22}|=|M_2|-|M_{21}|$,

\begin{align*}
\sum_{i=1}^{3} \text{deg}_{H'}(u_i)  \leq -\frac{9}{2}|M_2|^2+\Big(\frac{27}{2}+3n\Big)|M_2|+3|M_{21}|n-6|M_{21}||M_{22}'|-\frac{15}{2}|M_{21}|^2
+\frac{3}{2}|M_{22}'|^2+81\rho n^2.
\end{align*}
Note that the quadratic function $-\frac{9}{2}x^2+\Big(\frac{27}{2}+3n\Big)x$ is maximized at $x= \frac{1}{3}n+\frac{3}{2}$. Since $|M_2| \leq  \frac{1}{3}n-1$, it follows that
\begin{align*}
\sum_{i=1}^{3} \text{deg}_{H'}(u_i)  \leq \frac{3}{2}|M_{22}'|^2-6|M_{21}||M_{22}'|+3|M_{21}|n-\frac{15}{2}|M_{21}|^2+\frac{9}{2}n+\frac{1}{2}n^2+81\rho n^2\stackrel{\triangle}{=}f(M_{22}').
\end{align*}
Note that the quadratic function $\frac{3}{2}x^2-6|M_{21}|x$ is minimized at $x= 2|M_{21}|$. Since $0 \leq |M_{22}'| \leq \frac{1}{3}n-|M_{21}|$, we obtain $\sum_{i=1}^{3} \text{deg}_{H'}(u_i)  \leq \max\{f(0),f(\frac{1}{3}n-|M_{21}|)\}$. However,
\begin{align*}
f(0)  & = -\frac{15}{2}\Big( |M_{21}|-\frac{1}{5}n\Big)^2+\frac{4}{5}n^2+\frac{9}{2}n+81\rho n^2 \leq \frac{4}{5}n^2+\frac{9}{2}n+81\rho n^2 < \Big(\frac{4}{5}+3\gamma \Big)n^2
\end{align*}
and
\begin{align*}
f\Big(\frac{1}{3}n-|M_{21}|\Big)  & = \frac{2}{3}n^2+\frac{9}{2}n+81\rho n^2  < \Big(\frac{4}{5}+3\gamma \Big)n^2.
\end{align*}
It is a contradiction. Therefore $|A_{2}'''| \leq C/9$ and $|A_2\setminus V(M_2)| \leq C/3$.
\end{proof}

\begin{claim}
$|A_3\setminus V(M_2)| \leq C/3.$
\end{claim}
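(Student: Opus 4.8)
The plan is to mimic the structure of the preceding claim about $A_2$, but now exploiting the optimality condition (3) in the definition of $M_2$ differently, since vertices of $A_3$ are already "good" (they lie in $A_3\cup A_4$). The key point is that a vertex $u\in A_3\setminus V(M_2)$ cannot be reabsorbed into $M_2$ in a way that keeps $|A_3\cup A_4|$ inside $V(M_2)$ at least as large; but unlike the $A_2$ case, we cannot gain by swapping $u$ in, so instead we use condition (2): $M_2$ is of maximum size among matchings covering $A_1$. First I would suppose, for contradiction, that $|A_3\setminus V(M_2)|>C/3$ and pick three vertices $Q=\{u_1,u_2,u_3\}\subseteq A_3\setminus V(M_2)$, so that $\sum_{i=1}^3\deg_{H'}(u_i) > 3\big(\tfrac{3}{10}+\tfrac12\gamma-\tfrac32\varepsilon^{13}\big)n^2 > \big(\tfrac{9}{10}+\tfrac{3}{2}\gamma-5\varepsilon^{13}\big)n^2$. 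Since no three disjoint edges can be found meeting $Q$, $e_1$, $e_2$ for $e_1,e_2\in M_2$ (that would enlarge $M_2$, contradicting (2), after noting each such pair still covers the $A_1$-vertices), Lemma~\ref{lemma1} with $k=3$ gives $\sum_{u\in Q}|L_u(e_1,e_2)|\le 18$. Similarly, by the choice of $M_2$ together with Lemmas~\ref{lemmaa2} and~\ref{lemma3}, Subclaims~\ref{claim188888} and~\ref{claim1999999} apply verbatim with $u_1,u_2,u_3\in A_3\cup A_4$: for $e\in M_{21}$ we get $\sum_{i=1}^3|L_{u_i}(e,V(H')\setminus V(M_2))|\le 6|V(H')\setminus V(M_2)|$, and for $e\in M_{22}$ we get $\le 3(|V(H')\setminus V(M_2)|+3)$.

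Next I would assemble the count exactly as in the $A_2$ claim. Writing $|V(H')\setminus V(M_2)|=n-3|M_2|$, $|M_{22}|=|M_2|-|M_{21}|$, and bounding the "$L_u$ on two edges of $M_2$" contribution by $18\binom{|M_2|}{2}$ together with the at most $9|M_2|$ edges through one edge of $M_2$, I obtain an upper bound of the form
\begin{align*}
\sum_{i=1}^{3}\deg_{H'}(u_i) \le 18\binom{|M_2|}{2}+9|M_2|+6|M_{21}|(n-3|M_2|)+3(|M_2|-|M_{21}|)(n-3|M_2|+3).
\end{align*}
This is precisely the inequality analyzed in the proof of Claim~\ref{lemma24}'s perfect-matching step: it simplifies to $(-9|M_2|+3n-9)|M_{21}|+3|M_2|n+9|M_2|$, and using $|M_{21}|\le|M_2|\le n/3-1$ one gets $\sum_{i=1}^3\deg_{H'}(u_i)\le -9(|M_2|-n/3)^2+n^2\le n^2$ once $|M_2|$ is close to $n/3$ — but here I do not have $|M_2|\ge n/3-\tau n$ a priori, so I must instead keep the linear-in-$|M_{21}|$ bound and optimize over both $|M_{21}|$ and $|M_2|$ directly, which yields a bound strictly below $\big(\tfrac{9}{10}+\tfrac32\gamma\big)n^2$ for $n$ large; this contradiction forces $|A_3\setminus V(M_2)|\le C/3$.

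The step I expect to be the main obstacle is verifying that the optimality constraints (1)--(3) genuinely rule out the relevant "local" perfect matchings, so that Lemmas~\ref{lemma1},~\ref{lemmaa2},~\ref{lemma3} can be invoked — specifically, one must check that after any swap that would be created by such a local perfect matching, the resulting matching still covers all of $A_1$ (so (1) is preserved), has size at least $|M_2|$ (so we may compare via (2)), and — in the borderline case where size is preserved — does not decrease $|V(M_2)\cap(A_3\cup A_4)|$ (so (3) is the binding condition). Because $A_3$-vertices themselves belong to $A_3\cup A_4$, the bookkeeping here is slightly more delicate than in the $A_2$ case: swapping an $A_3$-vertex in replaces an already-counted "good" vertex, so I must argue that the vertex displaced out of $M_2$ lies in $A_1\cup A_2$ for the contradiction with (3) to bite (otherwise there is nothing to contradict and I fall back on (2) via a size increase). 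Once this case analysis is pinned down, the arithmetic is routine and parallels the $A_2$ claim and Claim~\ref{lemma24}.
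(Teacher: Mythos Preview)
Your approach has a genuine arithmetic gap that cannot be fixed without adding new ideas. With only the crude bound $\sum_{u\in Q}|L_u(e_1,e_2)|\le 18$ for all pairs $e_1,e_2\in M_2$, your assembled inequality
\[
\sum_{i=1}^{3}\deg_{H'}(u_i)\;\le\;(-9|M_2|+3n-9)|M_{21}|+3|M_2|n+9|M_2|
\]
is maximized (over $|M_{21}|\le |M_2|\le n/3-1$) at essentially $n^2$: taking $|M_{21}|=|M_2|$ gives $-9(|M_2|-n/3)^2+n^2$, which is $n^2-9$ at $|M_2|=n/3-1$. Even imposing $|M_{21}|\le (1/3-\tau)n$ does not help when $|M_2|$ is close to $n/3$. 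Since the lower bound you need to beat is only $\big(\tfrac{9}{10}+\tfrac{3}{2}\gamma\big)n^2$, there is no contradiction, and the proof collapses exactly at the optimization step you flagged as ``routine''.

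The paper closes this gap with two ingredients you are missing. First, it uses that a vertex of $A_3$ is not adjacent to any vertex of $A_1$ (by the degree-sum hypothesis), so for $u\in A_3$ one has $|L_u(EF)|\le 4$ when $E,F\in M_{21}$ and $|L_u(EF)|\le 6$ when $E\in M_{21}$, $F\in M_{22}$. Second, for pairs $E,F\in M_{22}$ the paper does \emph{not} bound $|L_u(EF)|$ uniformly; instead it invokes Lemmas~\ref{perfectmatching} and~\ref{b023b033} (the K\"uhn--Osthus--Treglown bad-vertex lemmas) to show that all but $2C/9$ vertices $u\in A_3\setminus V(M_2)$ have at most $\rho n^2$ pairs with $|L_u(EF)|\ge 6$, so that effectively $15\binom{|M_{22}|}{2}$ replaces $18\binom{|M_{22}|}{2}$. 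With these sharper coefficients the optimization yields an upper bound of order $\tfrac{8}{9}n^2$ (or $\tfrac{64}{75}n^2$ in another regime), which is below $\tfrac{9}{10}n^2$ and gives the required contradiction. Your plan, as written, does not invoke either of these refinements, and without them the counting cannot succeed.
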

\begin{proof} Clearly a vertex in $A_3$ is not adjacent to any vertex in $A_1$.   Let $u\in A_3\setminus V(M_2)$.  For any $EF \in \binom{M_{21}}{2}$, we have $|L_{u}(EF)| \leq 4$; For any $E \in M_{21},F \in M_{22}$, we have $|L_{u}(EF)| \leq 6$.
Let
$$A_{31} =\Big\{u \in A_3\setminus V(M_2):  \Big|\Big\{\{E_1E_2\}: E_1,E_2 \in M_{22}, L_u(E_1E_2) \,\, \text{ has a perfect matching}   \Big\}\Big| \geq \frac{1}{2}\rho n^2  \Big\}.$$
By Lemma \ref{perfectmatching}, we have $|A_{31}| \leq C/9.$ Let
$$A_{32} =\Big\{u \in A_3\setminus V(M_2):  \Big|\Big\{\{E_1E_2\}: E_1,E_2 \in M_{22}, L_u(E_1E_2) \cong B_{033} \Big\}\Big| \geq \frac{1}{2}\rho n^2  \Big\}.$$
By Lemma \ref{b023b033}, we have $|A_{32}| \leq C/9.$  By Fact \ref{fact1}, if $|L_u(E_1E_2)| \geq  6$, then $L_u(E_1E_2)$ has a perfect matching or $L_u(E_1E_2)\cong B_{033}$.  Let $A_{33} =(A_3\setminus V(M_2))\setminus (A_{31} \cup A_{32})$. For any $u \in A_{33}$, we obtain that  $M_{22}$ contains at most $\rho n^2$ pairs $E_1E_2$ satisfying $|L_{u}(E_1E_2)|\geq 6 $. Otherwise $u \in A_{31}$ or $u \in A_{32}$, a contradiction.   We claim that $|A_{33}| \leq C/9.$  To the contrary, suppose that $|A_{33}| > C/9.$  Let $u_1,u_2,u_3 \in A_{33}$. We have
\begin{align*}
\sum_{i=1}^{3} \text{deg}_{H'}(u_i)  >  \Big(\frac{9}{10}+\frac{3}{2}\gamma- \frac{9}{2}\varepsilon^{13}\Big)n^2 .
\end{align*}

By  Subclaims \ref{claim188888} and \ref{claim1999999}, we obtain that
\begin{align*}
\sum_{i=1}^{3} \text{deg}_{H'}(u_i)  \leq &   12\binom{|M_{21}|}{2}+18\binom{|M_{21}|}{1}\binom{|M_{22}|}{1}+15\binom{|M_{22}|}{2}+27\rho n^2+9 (|M_{21}|+|M_{22}|) \\
&+6|M_{21}|(n-3(|M_{21}|+|M_{22}|))+3|M_{22}|(n-3(|M_{21}|+|M_{22}|)+3)\\
=& -12|M_{21}|^2+(3-9|M_{22}|+6n)|M_{21}|-\frac{3}{2}|M_{22}|^2+\frac{21}{2}|M_{22}|+3|M_{22}|n+27\rho n^2.
\end{align*}
Note that the quadratic function $-12x^2+(3-9|M_{22}|+6n)x$ is maximized at $x= \frac{1}{4}n-\frac{3}{8}|M_{22}|+\frac{1}{8}$.

If $|M_{22}| \geq \frac{2}{15}n$, then $\frac{1}{4}n-\frac{3}{8}|M_{22}|+\frac{1}{8} \geq \frac{1}{3}n-|M_{22}|$. Since $|M_{21}| \leq \frac{1}{3}n-|M_{22}|$, it follows that
\begin{align*}
\sum_{i=1}^{3} \text{deg}_{H'}(u_i)  \leq & -12\Big(\frac{1}{3}n-|M_{22}|\Big)^2+(3-9|M_{22}|+6n)\Big(\frac{1}{3}n-|M_{22}|\Big)\\
& -\frac{3}{2}|M_{22}|^2+\frac{21}{2}|M_{22}|+3|M_{22}|n+27\rho n^2. \\
 = & -\frac{9}{2}|M_{22}|^2+\Big(2n+\frac{15}{2}\Big)|M_{22}|+\frac{2}{3}n^2+n+27\rho n^2.
\end{align*}
Note that the quadratic function $-\frac{9}{2}x^2+\Big(2n+\frac{15}{2}\Big)x$ is maximized at $x= \frac{2}{9}n+\frac{5}{6}$. Therefore
\begin{align*}
\sum_{i=1}^{3} \text{deg}_{H'}(u_i)
& \leq -\frac{9}{2}\Big(\frac{2}{9}n+\frac{5}{6}\Big)^2+\Big(2n+\frac{15}{2}\Big)\Big(\frac{2}{9}n+\frac{5}{6}\Big)+\frac{2}{3}n^2+n+27\rho n^2 \\
& \leq \frac{8}{9}n^2+\frac{8}{3}n+\frac{25}{8}+27\rho n^2 < \Big(\frac{9}{10}+\frac{3}{2}\gamma- \frac{9}{2}\varepsilon^{13}\Big)n^2 .
\end{align*}
It is a contradiction. So $|M_{22}| < \frac{2}{15}n$, then
\begin{align*}
\sum_{i=1}^{3} \text{deg}_{H'}(u_i)   \leq &   -12\Big(\frac{1}{4}n-\frac{3}{8}|M_{22}|+\frac{1}{8}\Big)^2+(3-9|M_{22}|+6n)\Big(\frac{1}{4}n-\frac{3}{8}|M_{22}|
+\frac{1}{8}\Big)\\
 & -\frac{3}{2}|M_{22}|^2+\frac{21}{2}|M_{22}|+3|M_{22}|n+27\rho n^2\\
 = & \frac{3}{16}|M_{22}|^2+\Big(\frac{3}{4}n+\frac{75}{8}\Big)|M_{22}|+\frac{3}{4}n^2+\frac{3}{4}n+\frac{3}{16}+27\rho n^2.
\end{align*}
Clearly
\begin{align*}
\sum_{i=1}^{3} \text{deg}_{H'}(u_i)   \leq &  \frac{3}{16}\Big(\frac{2}{15}n\Big)^2+\Big(\frac{3}{4}n+\frac{75}{8}\Big)\Big(\frac{2}{15}n\Big)+\frac{3}{4}n^2+\frac{3}{4}n+\frac{3}{16}+27\rho n^2\\
= &  \frac{64}{75}n^2+2n+\frac{3}{16}+27\rho n^2  <  \Big(\frac{9}{10}+\frac{3}{2}\gamma- \frac{9}{2}\varepsilon^{13}\Big)n^2 .
\end{align*}
It is a contradiction. Therefore $|A_{33}| \leq C/9$ and $|A_3\setminus V(M_2)| \leq C/3$.
\end{proof}

\begin{claim}
$|A_4\setminus V(M_2)| \leq C/3.$
\end{claim}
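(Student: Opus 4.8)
The plan is to mirror the structure of the two preceding claims, which bounded $|A_2\setminus V(M_2)|$ and $|A_3\setminus V(M_2)|$, and to exploit the fact that vertices in $A_4$ have the largest degree in $H'$ (above $\frac13 n^2-3\varepsilon^{13}n^2$). First I would observe that since a vertex $u\in A_4$ need not avoid adjacency with $A_1$, we must work with the full matching $M_2=M_{21}\cup M_{22}$, but the link bounds still apply: for $E,F\in\binom{M_{21}}{2}$ we have $|L_u(EF)|\le 4$, for $E\in M_{21},F\in M_{22}$ we have $|L_u(EF)|\le 6$, and for $E,F\in\binom{M_{22}}{2}$ we have $|L_u(EF)|\le 9$. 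As in Claim for $A_3$, I would carve out the vertices $u\in A_4\setminus V(M_2)$ whose link on $\ge \frac12\rho n^2$ pairs $E_1E_2\in\binom{M_{22}}{2}$ contains a perfect matching (these form a set of size $\le C/9$ by Lemma~\ref{perfectmatching}), and those for which $\ge\frac12\rho n^2$ such links are isomorphic to $B_{033}$ (size $\le C/9$ by Lemma~\ref{b023b033}). Call the remainder $A_{41}$: for each $u\in A_{41}$, at most $\rho n^2$ pairs $E_1E_2\in\binom{M_{22}}{2}$ satisfy $|L_u(E_1E_2)|\ge 6$ by Fact~\ref{fact1}.

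Next I would suppose for contradiction that $|A_{41}|>C/9$, pick $u_1,u_2,u_3\in A_{41}$ and set $Q=\{u_1,u_2,u_3\}$. Then $\sum_{i=1}^3\deg_{H'}(u_i)>3\left(\frac13 n^2-3\varepsilon^{13}n^2\right)=n^2-9\varepsilon^{13}n^2$. On the other side, I would estimate $\sum_{i=1}^3\deg_{H'}(u_i)$ by splitting the edges through $Q$ into those with two vertices inside one edge of $M_2$ (at most $9|M_2|$ of them), those with two vertices inside two edges of $M_2$, and those using a vertex of $V(H')\setminus V(M_2)$; for the last type Subclaims~\ref{claim188888} and~\ref{claim1999999} give $\le 6|V(H')\setminus V(M_2)|$ per edge of $M_{21}$ and $\le 3(|V(H')\setminus V(M_2)|+3)$ per edge of $M_{22}$. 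For the middle type I would use: $12\binom{|M_{21}|}{2}$ (from the $\le4$ bound, summed over three vertices), $18|M_{21}||M_{22}|$ (from the $\le6$ bound), and $27\binom{|M_{22}|}{2}$ replaced — on all but $\rho n^2$ pairs of $M_{22}$ where the bound $\le 5$ applies instead of $\le 9$ — by $15\binom{|M_{22}|}{2}+12\rho n^2$. Substituting $|V(H')\setminus V(M_2)|=n-3|M_2|=n-3(|M_{21}|+|M_{22}|)$, $|M_2|\le \frac13 n-1$, and $|M_{21}|+|M_{22}|\le\frac13 n$, the resulting quadratic in $|M_{21}|$ and $|M_{22}|$ should be maximized over the feasible region; I expect the maximum to be around $\frac{64}{75}n^2$ or $\frac{8}{9}n^2$ (as in the $A_3$ case), which is strictly below $n^2-9\varepsilon^{13}n^2$ for $\rho$ small, yielding the contradiction. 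Hence $|A_{41}|\le C/9$ and $|A_4\setminus V(M_2)|\le C/3$.

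Combining the four claims gives $|V(H')\setminus V(M_2)|=|A_1\setminus V(M_2)|+\sum_{i=2}^4|A_i\setminus V(M_2)|\le 0+C/3+C/3+C/3=C$, since $M_2$ covers all of $A_1$. Then, because $|V(H')\setminus V(M_2)|\le C\le \varepsilon^{26}n$ and $|V(H')\setminus V(M_2)|\equiv 0\pmod 3$ (as $n\in 3\mathbb{Z}$, $|V(M_1)|\equiv 0$, $|V(M_2)|\equiv 0$), the absorbing matching $M_1$ from Lemma~\ref{absorbinglemma1} absorbs these leftover vertices into a matching $M_3$ with $V(M_3)=V(M_1)\cup(V(H')\setminus V(M_2))$, and $M_2\cup M_3$ is a perfect matching of $H$, completing the proof of Lemma~\ref{lemma11182}.

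The main obstacle I anticipate is the bookkeeping in the quadratic optimization: one must track $|M_{21}|$ and $|M_{22}|$ separately (because edges of $M_{21}$ contribute the larger $6|V(H')\setminus V(M_2)|$ term while edges of $M_{22}$ contribute only $3(|V(H')\setminus V(M_2)|+3)$), split into the cases $|M_{22}|\ge \frac{2}{15}n$ and $|M_{22}|<\frac{2}{15}n$ exactly as in the $A_3$ claim, and verify in each case that the relevant vertex of the parabola lies outside or inside the feasible interval. The degree lower bound $n^2-O(\varepsilon^{13})n^2$ for three vertices of $A_4$ is the strongest of the four cases, so one must check that the constant appearing on the right-hand side of the upper bound — coming from the worst corner of the feasible region together with the $27\rho n^2$ or $12\rho n^2$ error — is genuinely strictly less than $1$; this is where the hierarchy $\gamma\gg\varepsilon\gg\rho$ is used.
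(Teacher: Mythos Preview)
Your proposal contains a genuine gap at the very first step. You write that ``the link bounds still apply: for $E,F\in\binom{M_{21}}{2}$ we have $|L_u(EF)|\le 4$, for $E\in M_{21},F\in M_{22}$ we have $|L_u(EF)|\le 6$'', but this is false for $u\in A_4$. Those bounds in the $A_2$ and $A_3$ claims came \emph{precisely} from the non-adjacency of $u$ with $A_1$ (so that $u$'s link can only touch the two $U'$-vertices in each $M_{21}$-edge). You yourself note that a vertex of $A_4$ may be adjacent to $A_1$; once that is granted, $|L_u(EF)|$ can be as large as $9$ on any pair of $M_2$-edges, and your contributions $12\binom{|M_{21}|}{2}$ and $18|M_{21}||M_{22}|$ are unjustified.

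The paper fixes this by applying Lemma~\ref{perfectmatching} to \emph{all} pairs $E_1E_2\in\binom{M_{21}\cup M_{22}}{2}$ (not only to $\binom{M_{22}}{2}$). For $u$ outside $A_{41}\cup A_{42}$ this yields: at most $\rho n^2$ pairs with $E_1\in M_{21}$, $E_2\in M_{21}\cup M_{22}$ satisfy $|L_u(E_1E_2)|\ge 7$ (Fact~\ref{fact1}), so on the remaining pairs the bound is $6$, giving $18\binom{|M_{21}|}{2}+18|M_{21}||M_{22}|+27\rho n^2$ rather than your $12$ and $18$. This change matters: the resulting quadratic is maximised near $n^2$, not near $\tfrac{8}{9}n^2$ as you anticipate, and the $A_3$-style case split on $|M_{22}|$ alone does not suffice. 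The paper closes the argument with a second ingredient you omit entirely: after optimising over $|M_{22}|$ one is left with a quadratic in $|M_{21}|$ whose vertex is at $\tfrac{1}{3}n-\tfrac{7}{2}$, and only the hypothesis $|M_{21}|=|A_1|\le|W|\le(\tfrac13-\tau)n$ of Lemma~\ref{lemma11182} pushes the bound down to $(1-\tfrac32\tau^2)n^2+O(n)<n^2-9\varepsilon^{13}n^2$, using $\tau\gg\varepsilon$ (not merely $\gamma\gg\varepsilon\gg\rho$).
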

\begin{proof}
Let
$$A_{41} =\Big\{u \in A_4\setminus V(M_2):  \Big|\Big\{\{E_1E_2\}: E_1,E_2 \in M_{21}\cup M_{22}, L_u(E_1E_2) \,\, \text{ has a perfect matching}   \Big\}\Big| \geq \frac{1}{2}\rho n^2  \Big\}.$$
By Lemma \ref{perfectmatching}, we have $|A_{41}| \leq C/9.$  We further let
$$A_{42} =\Big\{u \in A_4\setminus V(M_2):  \Big|\Big\{\{E_1E_2\}: E_1,E_2 \in M_{22}, L_u(E_1E_2) \cong B_{033} \Big\}\Big| \geq \frac{1}{2}\rho n^2  \Big\}.$$
By Lemma \ref{b023b033}, we have $|A_{42}| \leq C/9.$  Let $A_{43} =(A_4\setminus V(M_2))\setminus (A_{41} \cup A_{42})$.   By Fact \ref{fact1}, if $|L_u(E_1E_2)| \geq  6$, then $L_u(E_1E_2)$ has a perfect matching or $L_u(E_1E_2)\cong B_{033}$. Therefore for any vertex $u \in A_{43}$,  $$ \Big|\Big\{\{E_1E_2\}: E_1 \in M_{21},E_2 \in M_{21} \cup M_{22}, |L_u(E_1E_2)| \geq 7 \Big\}\Big| \leq \rho n^2,$$ and
$$ \Big|\Big\{\{E_1E_2\}: E_1,E_2 \in M_{22}, |L_u(E_1E_2)| \geq 6 \Big\}\Big| \leq \rho n^2.$$
We claim that $|A_{43}| \leq C/9.$ To the contrary, suppose that $|A_{43}| > C/9.$ Let $u_1,u_2,u_3 \in A_{43}$, we have
\begin{align*}
\sum_{i=1}^{3} \text{deg}_{H'}(u_i)  > n^2-9 \varepsilon^{13} n^2.
\end{align*}
By  Subclaims \ref{claim188888} and \ref{claim1999999}, we obtain
\begin{align*}
\sum_{i=1}^{3} \text{deg}_{H'}(u_i)  \leq & 18\binom{|M_{21}|}{2}+18\binom{|M_{21}|}{1}\binom{|M_{22}|}{1}+27\rho n^2+15\binom{|M_{22}|}{2}+27\rho n^2+9 (|M_{21}|+|M_{22}|) \\
&+6|M_{21}|(n-3(|M_{21}|+|M_{22}|))+3|M_{22}|(n-3(|M_{21}|+|M_{22}|)+3)\\
=&  -\frac{3}{2}|M_{22}|^2+\Big(-9|M_{21}|+\frac{21}{2}+3n\Big)|M_{22}|+6|M_{21}|n-9|M_{21}|^2+54\rho n^2.
\end{align*}
Note that the quadratic function $-\frac{3}{2}x^2+(-9|M_{21}|+\frac{21}{2}+3n)x$ is maximized at $x= n-3|M_{21}|+\frac{7}{2}$.  Since $|M_{22}| \leq \frac{1}{3}n-|M_{21}| \leq n-3|M_{21}|+\frac{7}{2}$,  it follows that
\begin{align*}
\sum_{i=1}^{3} \text{deg}_{H'}(u_i)  \leq & -\frac{3}{2}\Big(\frac{1}{3}n-|M_{21}|\Big)^2+\Big(-9|M_{21}|+\frac{21}{2}+3n\Big)\Big(\frac{1}{3}n-|M_{21}|\Big)+6|M_{21}|n-9|M_{21}|^2+54\rho n^2\\
= & -\frac{3}{2}|M_{21}|^2+\Big(n-\frac{21}{2}\Big)|M_{21}|+\frac{5}{6}n^2+\frac{7}{2}n+54\rho n^2.
\end{align*}
Note that the quadratic function $ -\frac{3}{2}x^2+\Big(n-\frac{21}{2}\Big)x$ is maximized at $x= \frac{1}{3}n-\frac{7}{2}$.
Moreover, for any vertex $u \in U$, $\deg_{H'}(u) \geq \deg_{H}(u) - |V(M_1)|n > \frac{3}{10}n^2+\frac{1}{2}\gamma n^2 - 3\varepsilon^{13} n^2$. Therefore  $U \cap A_1 =\emptyset$. It follows that $|M_{21}|=|A_1| \leq |W| \leq n/3-\tau n$. We obtain that
\begin{align*}
\sum_{i=1}^{3} \text{deg}_{H'}(u_i)  \leq & -\frac{3}{2}\Big(\frac{n}{3}-\tau n \Big)^2+\Big(n-\frac{21}{2}\Big)\Big(\frac{n}{3}-\tau n \Big)+\frac{5}{6}n^2+\frac{7}{2}n+54\rho n^2\\
& = \Big(1-\frac{3}{2}\tau^2\Big)n^2+\frac{21}{2}\tau n+54\rho n^2  < n^2-9 \varepsilon^{13} n^2.
\end{align*}
It is a contradiction. Therefore $|A_{43}| \leq C/9$ and $|A_4\setminus V(M_2)| \leq C/3$. It follows that $|V(H') \setminus V(M_2)| \leq C$. This finishes the proof.
\end{proof}

\end{document}